\def\bint{{\ifinner\rlap{\bf\kern.30em--}
\int\else\rlap{\bf\kern.35em--}\int\fi}\ignorespaces}
\def\sbint{{\ifinner\rlap{\bf\kern.32em--}
\hspace{0.078cm}\int\else\rlap{\bf\kern.45em--}\int\fi}\ignorespaces}
\def\red{\color{red}}
\def\rr{\mathbb{R}}
\def\rn{\mathbb{R}^n}
\def\rp{\mathbb{R}^{n+1}_+}
\def\nn{\mathbb{N}}
\def\zz{\mathbb{Z}}
\def\zn{\mathbb{Z}^n}
\def\ls{\lesssim}
\def\fz{\infty}
\def\cf{{\mathcal F}}
\def\cs{{\mathcal S}}
\def\bh{\mathbb{H}}
\def\lp{{L^p(\rn)}}
\def\r{\right}
\def\lf{\left}
\def\r{\right}
\def\lf{\left}
\def\f{\frac}
\def\loc{{\mathop\mathrm{\,loc\,}}}
\DeclareMathOperator*{\esssup}{ess\,\sup}
\def\eqref#1{(\ref{#1})}
\newtheorem{theorem}{Theorem}[section]
\newtheorem{lemma}[theorem]{Lemma}
\newtheorem{corollary}[theorem]{Corollary}
\newtheorem{proposition}[theorem]{Proposition}
\theoremstyle{definition}
\newtheorem{remark}[theorem]{Remark}
\newtheorem{definition}[theorem]{Definition}
\newtheorem{assumption}[theorem]{Assumption}
\numberwithin{equation}{section}
\begin{document}

\title{\bf\Large Riesz Transform Characterization of
Hardy Spaces Associated with Ball Quasi-Banach Function Spaces
\footnotetext{\hspace{-0.35cm} 2020 {\it Mathematics Subject
Classification}. {Primary 42B30; Secondary 42B35, 42B20, 44A15, 47B06.}
\endgraf {\it Key words and phrases}. Riesz transform characterization, ball quasi-Banach
function space, Hardy space, Poisson integral.
\endgraf This project is partially supported by the
National Key Research and Development Program of China (Grant No. 2020YFA0712900)
and the National Natural Science Foundation of China
(Grant Nos. 11971058, 12071197 and 12122102).}}
\date{ }
\author{Fan Wang, Dachun Yang\,\footnote{Corresponding author, E-mail: \texttt{dcyang@bnu.edu.cn}/{\red August 22,
2022}/Final version.}\, \ and Wen Yuan}
\maketitle

\vspace{-0.8cm}

\begin{center}
\begin{minipage}{13cm}
{\small {\bf Abstract}\quad Let $X$ be a ball quasi-Banach
function space satisfying some mild assumptions
and $H_X(\mathbb{R}^n)$ the Hardy space associated
with $X$. In this article, the authors introduce both the
Hardy space $H_X(\mathbb{R}^{n+1}_+)$ of harmonic
functions
and the Hardy space $\mathbb{H}_X(\mathbb{R}^{n+1}_+)$
of harmonic vectors, associated with $X$,
and then establish the isomorphisms among
$H_X(\mathbb{R}^n)$, $H_{X,2}(\mathbb{R}^{n+1}_+)$,
and $\mathbb{H}_{X,2}(\mathbb{R}^{n+1}_+)$,
where $H_{X,2}(\mathbb{R}^{n+1}_+)$
and $\mathbb{H}_{X,2}(\mathbb{R}^{n+1}_+)$ are,
respectively, certain subspaces of  $H_X(\mathbb{R}^{n+1}_+)$
and $\mathbb{H}_X(\mathbb{R}^{n+1}_+)$.
Using these isomorphisms, the authors establish
the first order Riesz transform characterization of $H_X(\mathbb{R}^n)$.
The higher order Riesz transform characterization of $H_X(\mathbb{R}^n)$ is  also obtained.
The results obtained in this article
have a wide range of generality and can be applied to
the classical Hardy space, the weighted Hardy space,
the Herz--Hardy space, the Lorentz--Hardy space,
the variable Hardy space, the mixed-norm
Hardy space, the local
generalized Herz--Hardy space, and the mixed-norm Herz--Hardy space.
}
\end{minipage}
\end{center}

\vspace{0.3cm}

\section{Introduction}

Let $\cs(\rn)$ be the set of all \emph{Schwartz functions}
on $\rn$, equipped with the well-known topology
determined by a countable family of norms. Recall
that, for any $j\in\{1,\dots,n\}$, the \emph{$j$-th Riesz
transform $R_j$} is defined by setting, for any
$f\in \cs(\rn)$
and $x\in\rn$,
$$
R_j(f)(x):=\lim_{\delta\to0^+}c_{(n)}\int_{\{y\in\rn:\,|y|>\delta\}}\frac{y_j}{|y|^{n+1}}f(x-y)\,dy,
$$
here and thereafter, $\delta\to 0^+$ means that
$\delta\in(0,\infty)$ and $\delta\to 0$, and
\begin{equation}\label{unit-b}
c_{(n)}:=\frac{\Gamma([n+1]/2)}{\pi^{(n+1)/2}}
\end{equation}
with $\Gamma$ denoting the \emph{Gamma function}.
It is well known that Riesz transforms are natural
generalizations of the Hilbert transform to the Euclidean
space of higher dimension and the most typical
examples of Calder\'on--Zygmund operators,  and have
many interesting and useful properties (see, for instance,
\cite{gl,s70,s93} and their references).
Indeed, they are the simplest, nontrivial, ``invariant"
operators under the acting of the group of rotations
on the Euclidean space $\rn$, and they also constitute
typical and important examples of Fourier multipliers.
Moreover, they can be used to mediate between various
combinations of partial derivatives of functions. All these properties
make Riesz transforms ubiquitous in mathematics;
see \cite{s70} for more details on their applications.

Let $\mathscr{M}(\rn)$ be the set of all measurable functions on $\rn$.
Recall that the Lebesgue space $L^p(\rn)$ with $p\in(0,\fz)$ is defined by setting
$$L^p(\rn):=\left\{f \in \mathscr{M}(\rn):\
\|f\|_{\lp}:=\left[\int_{\rn}|f(x)|^p\,dx\right]^{1/p}<\infty\right\}.$$
A very classical result about Riesz transforms is that
they are bounded on Lebesgue space $L^p(\rn)$ with $p\in(1,\infty)$.
However, when $p\in(0,1]$,
the Riesz transform is not bounded on $L^p(\rn)$ anymore.
As a natural generalization and a natural substitute
of $L^p(\rn)$ with $p\in(0,1]$, the Hardy space $H^p(\rn)$
was originally initiated  by Stein and Weiss \cite{sw} and
then systematically investigated by Fefferman and Stein \cite{fs}.
The celebrated articles \cite{sw} and \cite{fs} inspire many
new ideas for the real-variable theory of function spaces.
For instance, various real-variable  characterizations
of classical Hardy spaces reveal the important connections
among various concepts in harmonic analysis, such as harmonic functions,
various maximal functions, and various square functions.

It is well known that Riesz transforms are not only
bounded on Hardy spaces, but also characterize
Hardy spaces.   As a famous feature of Hardy spaces,
the  characterization   via Riesz transforms was first studied by
Fefferman and Stein  \cite{fs} in 1972 and further extended
by Wheeden \cite{w76} to the weighted Hardy spaces. From then on,
there exists a series of studies on the Riesz transform characterization of various
Hardy type spaces. As a recent progress in this direction, we reall that
Cao et al. \cite{ccyy16} established the
Riesz transform characterization of Musielak--Orlicz Hardy
spaces, and Yang et al. \cite{yzn} established
the Riesz transform characterization of variable exponent
Hardy spaces. We also refer the reader to \cite{bbd21,ds22,dl21,dlwy21,ll22,mv22}
for some recent developments on the study of Riesz transforms
and to \cite{bd21,g22,j21,lyyy,ttd21,yy15} for some research of Riesz transforms associated with operators.

A key step in  establishing the
Riesz transform characterization of Hardy spaces $H^p(\rn)$ is to
extend the elements of $H^p(\rn)$ to the
upper half space
$
\rp:=\rn\times(0,\infty)
$
via the Poisson integral. This extension in turn has
a close relation with the analytical definition
of $H^p(\rn)$, which is the key starting point of studying
the Hardy space, before people paid attention to the
real-variable theory of $H^p(\rn)$ (see, for instance, \cite{s69,sw}).
Recall also that the real-variable theory of $H^p(\rn)$
and their weighted versions play very important roles in
analysis such as both harmonic analysis and partial differential
equations; see, for instance, \cite{gl14,s93}.

On the other hand,
along with the development of various analysis areas, there appear
many classes of function spaces which are
more inclusive and exquisite than Lebesgue spaces, such as weighted Lebesgue spaces,
Lorentz spaces, variable Lebesgue spaces, Orlicz spaces,
 Morrey spaces, and (quasi-)Banach function spaces.
For both their definitions and properties of (quasi-)Banach function spaces,
see, for instance, \cite[Chapter 1]{br} and also Definition \ref{bfs}
below. These spaces and the Hardy-type spaces based
on them have been investigated extensively
in recent decades.

It is known  that Lebesgue spaces,
Lorentz spaces, variable Lebesgue spaces, and Orlicz
spaces  are (quasi-)Banach function spaces.
However, weighted Lebesgue spaces, Herz spaces,
Morrey spaces, and Musielak--Orlicz spaces might not be
quasi-Banach function spaces (see, for instance,
\cite{st,wyyz,zyyw} for more details and examples).
Therefore, in this sense, the concept of
(quasi-)Banach function spaces is restrictive.
To establish a general framework including  also
weighted Lebesgue spaces, Herz
spaces, Morrey spaces, and Musielak--Orlicz
spaces, Sawano et al. \cite{shyy} introduced
the so-called ball quasi-Banach function space (see Definition \ref{bqbfs} below).
Moreover, based on a given  ball quasi-Banach
function space $X$, Sawano et al. \cite{shyy}  also developed a real-variable
theory of the related Hardy-type space  $H_X(\rn)$
(see Definition \ref{H} below or \cite[Definition 6.17]{shyy}), which  was originally
defined via the maximal function of Petree type.  The
equivalent characterizations  of $H_X(\rn)$, respectively, in terms of Lusin-area functions,
atoms, and radial or non-tangential maximal functions, were also established
in \cite{shyy}.
More recently, Wang et al. \cite{wyy20}
characterized the space $H_X(\rn)$
via Littlewood--Paley $g$-functions or $g_\lambda^\ast$-functions
(see also \cite{cwyz}) and obtained the boundedness of
Calder\'on--Zygmund operators on $H_X(\rn)$.
Besides, Yan et al. \cite{yyy20} established the intrinsic square
function characterizations of $H_X(\rn)$.
We also
refer the reader to \cite{dgpyyz,dlyyz,syy,syy2,yhyy,yhyyb}
for some latest progress on both ball quasi-Banach
function spaces and their related Hardy-type spaces.

A natural and interesting \emph{question} is whether or not such a
general Hardy-type space $H_X(\rn)$ can also be
characterized by the  Riesz transform, and the
main purpose of this article is to   give an affirmative answer to this question.
To be precise, let $X$ be a ball quasi-Banach function space
satisfying Assumptions \ref{a} and \ref{a2} below with both the same
$s\in(0,1]$ and $\theta\in(0,s)$. We establish the first order Riesz
transform characterization of $H_X(\mathbb{R}^n)$ when
$\theta\in[\frac{n-1}{n},s)$ and the higher order Riesz
transform characterization of $H_X(\mathbb{R}^n)$ when
$\theta\in(0,\frac{n-1}{n})$;
see Theorems \ref{thm-re} and \ref{thm-h-re} below for more details.

It is worth pointing out that
 the results obtained in this article
have a wide range of generality. More precisely, the Hardy
type space $H_X(\rn)$
considered in this article generalizes and unifies many known
Hardy-type spaces, including the classical  Hardy space,
the weighted Hardy space, the Herz--Hardy space,
the Lorentz--Hardy space,
the variable Hardy space, the mixed-norm
Hardy space, the local
generalized Herz--Hardy space, the mixed-norm Herz--Hardy
space, and the Morrey--Hardy space.
Moreover, to the best of our knowledge, even for the Herz--Hardy space,
the Lorentz--Hardy space, the mixed-norm
Hardy space, the local
generalized Herz--Hardy space, the mixed-norm Herz--Hardy space,
and the Morrey--Hardy space,
the results obtained in this article are totally  new (see Section \ref{s5} below).
We also point out that the range of
$\theta\in[\frac{n-1}{n},s)$ in Theorem \ref{thm-re}
is the best possible [see Remark \ref{re-sharp}(ii) below for more details].
These obviously reveal both the generality and the flexibility
of the main results of this article and hence more
applications are predictable.

The organization of the remainder of this article is as follows.

In Section \ref{s2}, we first recall some basic concepts of
the ball quasi-Banach function space $X$ and the
Hardy space $H_X(\rn)$ associated with $X$. We also recall the
concept of Poisson integrals and present some basic
properties of Poisson integrals of   distributions  in $H_X(\rn)$.

Section \ref{s3} is devoted to establishing the characterization of $H_X(\rn)$
via the first order Riesz transform. To   this end,
we first introduce both the Hardy space
$H_X(\rp)$ of harmonic functions associated with $X$
and the Hardy space $\mathbb{H}_X(\rp)$ of
harmonic vectors associated with $X$.
Then we establish the isomorphisms among
$H_X(\rn)$, $H_{X,2}(\rp)$, and $\mathbb{H}_{X,2}(\rp)$, where $H_{X,2}(\mathbb{R}^{n+1}_+)$
and $\mathbb{H}_{X,2}(\mathbb{R}^{n+1}_+)$ are,
respectively, certain subspaces of  $H_X(\mathbb{R}^{n+1}_+)$
and $\mathbb{H}_X(\mathbb{R}^{n+1}_+)$;
see Theorem \ref{isom} below.
Using these isomorphisms, we then obtain the first order Riesz transform
characterization of $H_X(\rn)$; see Theorem \ref{thm-re} below.
Differently from both the case of  Musielak--Orlicz--Hardy spaces
in \cite{ccyy16} and the case of variable Hardy
spaces in \cite{yzn}, since the norm of the space $H_X(\rn)$
has no explicit expression, the methods used in both
\cite{ccyy16} and \cite{yzn} are not feasible anymore.
To overcome this essential difficulty,
we assume that $X$ supports a vector-valued maximal
inequality (see Assumption \ref{a} below) and then we embed $X$
into a weighted
Lebesgue space (see Lemma \ref{emb} below).
Moreover, using this assumption, we also find
surprisingly that the parameter $\theta$ appearing
in Assumption \ref{a}, which felicitously characterizes the boundedness of
the fractional Hardy--Littlewood maximal operator on $X$,
plays the same role in some sense as that the parameter $p$ plays in $L^p(\rn)$; 
see both of (ii) and (iii) Remark \ref{re-sharp} below.

In Section \ref{s4},  we turn to establishing the higher order
Riesz transform characterization of $H_X(\rn)$.
We first recall some basic concepts associated with tensor products
and then introduce the higher order Riesz--Hardy space.
Using these spaces, we generalize Theorem \ref{thm-re} and
obtain higher order
Riesz transform characterization of $H_X(\rn)$;
see Theorem \ref{thm-h-re} below.

In Section \ref{s5}, we apply our main results obtained in Sections \ref{s3} and \ref{s4} to
four concrete examples of ball quasi-Banach
function spaces, namely, Lorentz spaces (Subsection \ref{ls}), mixed-norm
Lebesgue spaces (Subsection
\ref{mnls}), local
generalized Herz spaces (Subsection
\ref{lghs}), mixed-norm Herz spaces
(Subsection \ref{mhs}), and Morrey spaces (Subsection \ref{ms}).

Finally, we make some conventions on notation. Throughout this article,
let $\nn:=\{1,2,\dots\}$, $\zz_+:=\nn\cup\{0\}$, and $\mathbf{0}$
be the \emph{origin} of $\rn$. We always use $C$ to denote a \emph{positive constant},
independent of the main parameters involved, but perhaps varying from line to line.
Moreover, we also use $C_{(\alpha,\beta,\dots)}$ to denote a positive constant
depending on the parameters $\alpha, \beta,\dots$.
The \emph{symbol} $f\lesssim g$ means that $f\le Cg$ and, if $f\lesssim g$ and $g\lesssim f$,
we then write $f\sim g$. If $f\le Cg$ and $g=h$ or $g\le h$, we then write $f\ls g\sim h$
or $f\ls g\ls h$, \emph{rather than} $f\ls g=h$
or $f\ls g\le h$. The \emph{symbol} $\lceil s \rceil$ for any $s\in\rr$
denotes the smallest integer not less than $s$ and the \emph{symbol} $\lfloor s \rfloor$ for any $s\in\rr$
denotes the largest integer not greater than  $s$. For any subset $E$ of $\rn$,
we denote by $E^\complement$ the \emph{set} $\rn\setminus E$ and by $\mathbf{1}_{E}$
its \emph{characteristic function}. For any multi-index
$\alpha:=(\alpha_1, \dots,\alpha_n)\in\zn_+:=(\zz_+)^n$, let
$|\alpha|:= \alpha_1+\cdots+\alpha_n.$
The operator $M$ always
denotes the \emph{Hardy--Littlewood maximal operator}, which is defined by setting,
for any $f\in L^1_{\loc}(\rn)$ (the set of all locally integrable functions  on $\rn$) and $x\in\rn$,
$$M(f)(x):=\sup_{r\in(0,\infty)}\frac{1}{|B(x,r)|}\int_{B(x,r)}|f(y)|\,dy,$$
where $B(x,r)$ denotes the \emph{ball} with the center $x$ and the radius $r$.
The symbol $\varliminf$ means $\liminf$. We use $\mathcal{S}(\rn)$ to denote the \emph{Schwartz space}
equipped with the well-known classical topology determined by a countable family
of norms, while $\mathcal{S}'(\rn)$ denotes its topological dual space
equipped with the weak-$\ast$ topology.
The symbol $\mathscr{M}(\rn)$ denotes the set of
all measurable functions on $\rn$.
For any $\varphi\in\mathcal{S}(\rn)$ and $t\in(0,\infty)$,
let $\varphi_t(\cdot):= t^{-n}\varphi(\cdot/t)$. Finally, for any $q\in[1,\fz]$,
we denote by $q'$ its \emph{conjugate exponent}, namely, $1/q + 1/q'=1$. Also, when we prove a lemma,
proposition, theorem, or corollary, we always
use the same symbols in the wanted proved lemma,
proposition, theorem, or corollary.

\section{Ball Quasi-Banach Function Spaces and Poisson Integrals}\label{s2}

In this section, we recall the concepts of ball
quasi-Banach function spaces and Poisson integrals,
as well as some of their basic properties.
Let us begin with the concept of quasi-Banach
function spaces (see, for instance, \cite[Chapter 1]{br}
for more details). Recall that $\mathscr{M}(\rn)$ denotes
the set of all measurable functions on $\rn$

\begin{definition}\label{bfs}
Let $X\subset \mathscr{M}(\rn)$ be a quasi-normed linear
space equipped with a quasi-norm $\|\cdot\|_X$ which makes
sense for the whole $\mathscr{M}(\rn)$.
Then $X$ is called a \emph{quasi-Banach function space} if it satisfies
\begin{enumerate}
\item[(i)] if $f\in\mathscr{M}(\rn)$, then $\|f\|_{X}=0$
implies that $f=0$ almost everywhere;

\item[(ii)]  if $f,g\in\mathscr{M}(\rn)$, then $|g|\le|f|$ in the sense of almost everywhere implies
that $\|g\|_{X}\le\|f\|_{X}$;

\item[(iii)]  if $\{f_m\}_{m\in\nn}\subset\mathscr{M}(\rn)$
and $f\in\mathscr{M}(\rn)$, then $0\le f_{m}\uparrow f$ as $m\to\infty$
in the sense of almost
everywhere implies that $\|f_{m}\|_{X}\uparrow\|f\|_{X}$
as $m\to\infty$;

\item[(iv)]  $\mathbf{1}_E\in X$ for any measurable set $E\subset\rn$ with finite measure.
\end{enumerate}

Moreover, a quasi-Banach function space $X$ is called a
\emph{Banach function space} if it satisfies

\begin{enumerate}
\item[(v)] for any $f,g\in X$, $\|f+g\|\leq\|f\|+\|g\|$;
\item[(vi)]  for any measurable set $E\subset\rn$ with finite measure,
there exists a positive constant $C_{(E)}$, depending on $E$,
such that, for any $f\in X$,
\begin{equation*}
\int_{E} |f(x)|\,dx \le C_{(E)}\|f\|_{X}.
\end{equation*}
\end{enumerate}
\end{definition}

As is mentioned in the introduction, Lebesgue spaces,
Lorentz spaces, variable Lebesgue spaces, and Orlicz spaces  are 
(quasi-)Banach function spaces, but  weighted Lebesgue spaces, Herz spaces,
Morrey spaces, and Musielak--Orlicz spaces might not be
quasi-Banach function spaces (see, for instance,
\cite{st,wyyz,zyyw} for more details).  To give a more general framework
containing all aforementioned spaces,
Sawano et al. \cite{shyy} introduced
the following   ball quasi-Banach function spaces.

\begin{definition}\label{bqbfs}
Let $X\subset\mathscr{M}(\rn)$ be a quasi-normed linear space.
Then $X$ is called a \emph{ball quasi-Banach function space}
(for short, BQBF space) if it satisfies
(i), (ii), and (iii) of Definition \ref{bfs} and
\begin{enumerate}
\item[{\rm(vii)}] $\mathbf{1}_B\in X$ for any ball $B\subset\rn$.
\end{enumerate}

A ball quasi-Banach function space $X$
is called a \emph{ball Banach function space} if the norm of $X$ satisfies
\begin{enumerate}
\item[{\rm(viii)}] for any $f,g\in X$,
$$\|f+g\|_X\leq\|f\|_X+\|g\|_X;$$
\item[{\rm(ix)}] for any ball $B\subset\rn$, there exists a positive constant $C_{(B)}$ such that,
for any $f\in X$,
\begin{equation*}
\int_B|f(x)|\,dx\leq C_{(B)}\|f\|_X.
\end{equation*}
\end{enumerate}
\end{definition}

\begin{remark}\label{rem-ball-B}
\begin{enumerate}
\item[$\mathrm{(i)}$] Let $X$ be a ball quasi-Banach
function space on $\rn$. By \cite[Remark 2.6(i)]{yhyy} or \cite[Remark 2.5(i)]{yhyyb},
we conclude that, for any $f\in\mathscr{M}(\rn)$, $\|f\|_{X}=0$ if and only if $f=0$
almost everywhere.

\item[$\mathrm{(ii)}$] As was mentioned in
\cite[Remark 2.6(ii)]{yhyy} or \cite[Remark 2.5(ii)]{yhyyb}, we obtain an
equivalent formulation of Definition~\ref{bqbfs}
via replacing any ball $B$ by any
bounded measurable set $E$ therein.

\item[$\mathrm{(iii)}$] We should point out that,
in Definition \ref{bqbfs}, if we
replace any ball $B$ by any measurable set $E$ with
finite measure, we obtain the
definition of (quasi-)Banach function spaces which were originally
introduced in \cite[Definitions 1.1 and 1.3]{br}. Thus,
a (quasi-)Banach function space
is also a ball (quasi-)Banach function
space and the converse is not necessary to be true.
\item[$\mathrm{(iv)}$] By \cite[Theorem 2]{dfmn2021},
we conclude that both (ii) and (iii) of
Definition \ref{bqbfs} imply that any ball quasi-Banach
function space is complete and the converse is not necessary to be true.
\end{enumerate}
\end{remark}

From Definition \ref{bqbfs}, we deduce the following Fatou
property of $X$. In what follows, we denote $\lim_{k\to\infty}\inf_{j\geq k}$
simply by $\varliminf_{k\to\infty}$.

\begin{lemma}\label{fatou}
Let $X$ be a BQBF space. If both a sequence $\{f_k\}_{k\in\nn}\subset X$
and an $f\in X$ satisfy
$\varliminf_{k\to\infty}|f_k(x)|=|f(x)|$  for almost every $x\in\rn$, then
$$
\|f\|_X\leq \varliminf_{k\to\infty}\|f_k\|_X.
$$
\end{lemma}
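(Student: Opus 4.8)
The plan is to reduce the statement to the monotone convergence property (iii) of Definition \ref{bfs}, which a BQBF space inherits, by manufacturing from $\{f_k\}_{k\in\nn}$ an increasing sequence of nonnegative measurable functions that converges pointwise to $|f|$.

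First I would set, for any $k\in\nn$ and $x\in\rn$,
$$
g_k(x):=\inf_{j\geq k}|f_j(x)|.
$$
Being a countable infimum of measurable functions, each $g_k$ belongs to $\mathscr{M}(\rn)$. The sequence $\{g_k\}_{k\in\nn}$ is nondecreasing in $k$, because the infimum is taken over a shrinking index set, and, by the very definition of $\varliminf$, it satisfies $g_k(x)\uparrow \varliminf_{k\to\infty}|f_k(x)|=|f(x)|$ for almost every $x\in\rn$. Moreover $0\le g_k\le |f_k|$ pointwise, so the lattice property (ii) of Definition \ref{bfs} gives $\|g_k\|_X\le\|f_k\|_X<\infty$ and hence $g_k\in X$.

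Next I would invoke property (iii): since $0\le g_k\uparrow |f|$ almost everywhere, we obtain $\|g_k\|_X\uparrow \|\,|f|\,\|_X$ as $k\to\infty$. Observe that $\|\,|f|\,\|_X=\|f\|_X$; indeed, since $|\,|f|\,|=|f|$ pointwise, applying (ii) in both directions yields simultaneously $\|\,|f|\,\|_X\le\|f\|_X$ and $\|f\|_X\le\|\,|f|\,\|_X$. Consequently the (monotone) limit $\lim_{k\to\infty}\|g_k\|_X$ exists and equals $\|f\|_X$.

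Finally, taking $\varliminf_{k\to\infty}$ in the inequality $\|g_k\|_X\le\|f_k\|_X$ and using that $\{\|g_k\|_X\}_{k\in\nn}$ in fact converges, I would conclude
$$
\|f\|_X=\lim_{k\to\infty}\|g_k\|_X=\varliminf_{k\to\infty}\|g_k\|_X\le\varliminf_{k\to\infty}\|f_k\|_X,
$$
which is the desired estimate. The argument is essentially routine; the only points requiring care are verifying that the auxiliary functions $g_k$ genuinely lie in $X$ and increase almost everywhere to $|f|$, so that the monotone convergence property (iii) can legitimately be applied, together with the elementary identity $\|\,|f|\,\|_X=\|f\|_X$. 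I do not expect any substantial obstacle beyond this bookkeeping.
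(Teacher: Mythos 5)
Your proposal is correct and follows essentially the same route as the paper: both define $g_k:=\inf_{j\geq k}|f_j|$, observe $0\le g_k\uparrow|f|$ a.e.\ with $g_k\le|f_k|$, and then combine properties (ii) and (iii) of Definition \ref{bfs} to conclude. No gaps.
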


\begin{proof}
 For any $k\in\nn$ and
$x\in\rn$, let $g_k(x):=\inf_{j\geq k}|f_j(x)|$. Then
we find that, for any $k\in\nn$ and almost every $x\in\rn$,
\begin{equation}\label{3.6x}
g_k(x)\leq |f_{k}(x)|,\ g_k(x)\leq g_{k+1}(x),
\ \text{and}\ \lim_{k\to\infty}g_k(x)=|f(x)|.
\end{equation}
From Definition \ref{bfs}(ii), we deduce that
$\|f\|_X=\|\,|f|\,\|_X$ and $\|f_k\|_X=\|\,|f_k|\,\|_X$.
This, together with \eqref{3.6x} and both (ii) and (iii)
of Definition \ref{bfs}, further implies that
$$
\|f\|_X=\|\,|f|\,\|_X=\lim_{k\to\infty}\|g_k\|
\leq\varliminf_{k\to\infty}\|\,|f_{k}|\,\|_X
=\varliminf_{k\to\infty}\|f_{k}\|_X,
$$
which completes the proof of Lemma \ref{fatou}.
\end{proof}

We now recall the concepts of both the $p$-convexification and the
convexity of $X$ (see, for instance, \cite[Chapter 2]{ors}
and \cite[Definition 1.d.3]{lt} for more details).

\begin{definition}
Let $X$ be a BQBF space and $p \in (0,\infty)$.
\begin{enumerate}
\item[{\rm(i)}] The \emph{$p$-convexification} $X^p$ of $X$ is defined by setting
$$X^p :=\lf\{f\in \mathscr{M}(\rn):\ |f|^p\in X\r\},$$
equipped with the \emph{quasi-norm} $\|f\|_{X^p}:=\|\,|f|^p\,\|_X^{1/p}$.
\item[{\rm(ii)}] The space $X$ is said to be \emph{$p$-convex}
if there exists a positive
constant $C$ such that, for any $\{f_j\}_{j\in\nn} \subset X^{1/p}$,
$$\left\|\sum_{j=1}^\infty|f_j|\right\|_{X^{1/p}}
\leq C\sum_{j=1}^\infty\|f_j\|_{X^{1/p}}.$$
In particular, when $C=1$, $X$ is said to
be \emph{strictly $p$-convex}.
\end{enumerate}
\end{definition}

Based on BQBF spaces,  Sawano et al. \cite{shyy} also introduced the following Hardy-type spaces.

\begin{definition}\label{H}
Let $X$ be a BQBF space. Let $\Phi \in \mathcal{S}(\rn)$
satisfy $\int_{\rn}\Phi(x)\,dx\neq 0$ and $b\in(0,\infty)$ be
sufficiently large. Then the \emph{Hardy space $H_X(\rn)$
associated with $X$} is defined by setting
$$H_X(\rn):=\lf\{f \in\mathcal{S}'(\rn):\ \|f\|_{H_X(\rn)}:=
\lf\|M_b^{\ast\ast}(f,\Phi)\r\|_X<\infty\r\},$$
where $M_b^{\ast\ast}(f,\Phi)$ is defined by setting,
for any $f\in \cs'(\rn)$ and $x\in\rn$,
\begin{equation*}
M_b^{\ast\ast}(f,\Phi)(x) :=\sup_{(y,t)\in\mathbb{R}_+^{n+1}}
\frac{|\Phi_t\ast f(x-y)|}{(1+t^{-1}|y|)^b}.
\end{equation*}
\end{definition}

\begin{remark}
Let all the symbols be the same as in Definition \ref{H}. Assume that there exists an $r\in(0,\infty)$ such that the Hardy--Littlewood maximal operator $M$ is bounded on $X^{1/r}$. 
If $b\in(n/r,\infty)$, then, by \cite[Theorem 3.1]{shyy}, we find 
that the Hardy space $H_X(\rn)$ is independent of the choice of $b$.
\end{remark}

Denote by $L^{1}_{\rm loc}(\rn)$ the set of all locally integrable functions
on $\rn$. For any $\theta\in(0,\infty)$, the \emph{powered Hardy--Littlewood maximal operator}
$M^{(\theta)}$ is defined by setting,
for any $f\in L^{1}_{\rm loc}(\rn)$ and $x\in\rn$,
\begin{equation*}
M^{(\theta)}(f)(x):=\lf\{M\lf(|f|^\theta\r)(x)\r\}^{\frac{1}{\theta}}.
\end{equation*}

Moreover, we also need some basic assumptions on $X$ as follows
(see also \cite[(2.8) and (2.9)]{shyy}).

\begin{assumption}\label{a}
Let $X$ be a BQBF space. Assume that, for
some $\theta,s\in (0,1]$ such that  $\theta<s$,
there exists a positive constant $C$ such that, for
any $\{f_j\}_{j=1}^\infty \subset L^{1}_{\rm loc}(\rn)$,
\begin{equation}\label{ma}
\left\|\left\{\sum_{j=1}^\infty\left[M^{(\theta)}
(f_j)\right]^s\right\}^{\frac{1}{s}}\right\|_X\leq C
\left\|\left\{\sum_{j=1}^\infty|f_j|^s\right\}^{\frac{1}{s}}\right\|_X.
\end{equation}
\end{assumption}

\begin{remark}\label{r2.1}
The inequality \eqref{ma} is called the \emph{Fefferman--Stein
vector-valued maximal inequality} on $X$.
If $X:=L^p(\rn)$ with $p\in(1,\infty)$, $\theta=1$, and $s\in(1,\infty]$, the inequality \eqref{ma}
was originally established by Fefferman and Stein \cite[Theorem 1]{fs2}.
See, for instance, \cite[Remark 2.4]{wyy20} for some examples of ball quasi-Banach
function spaces satisfying \eqref{ma}.
\end{remark}

To state the next assumption on $X$, we need the concept of the associate space.
For any ball Banach function space $X$, the \emph{associate space}
(also called \emph{K\"{o}the dual}) $X'$ of $X$ is defined by setting
$$X':=\lf\{f \in \mathscr{M}(\rn):\ \|f\|_{X'}<\infty\r\},
$$
where, for any $f \in \mathscr{M}(\rn)$,
$$
\|f\|_{X'}:=
\sup\{\|fg\|_{L^1(\rn)}:\ g \in X,\ \|g\|_X=1\}
$$
(see, for instance, \cite[Chapter 1, Section 2]{br} for the details).
Recall that, for any given ball Banach function space $X$, $X'$ is also a
ball Banach function space (see \cite[Proposition 2.3]{shyy}).

\begin{assumption}\label{a2}
Let $X$ be a BQBF space. Assume that there exists an $s\in(0,1]$
such that $X^{1/s}$ is also a ball Banach function space and there exists a $q\in(1,\fz]$
and a $C\in(0,\fz)$ such that, for any $f\in (X^{1/s})'$,
\begin{equation}\label{ma21}
\left\|M^{((q/s)')}(f)\right\|_{(X^{1/s})'}\le  C \|f\|_{(X^{1/s})'}.
\end{equation}
\end{assumption}

\begin{remark}
We point out that, in \cite[Theorems 2.10, 3.7 and 3.21]{shyy}, one needs the additional assumption
that there exists an $s\in(0,1]$ such that $X^{1/s}$ is a ball Banach function space.
Indeed, this assumption ensures that $(X^{1/s})'$ is
also a ball Banach function space, which further implies that,
for any $f\in (X^{1/s})'$,  $f\in L^1_{\loc}(\rn)$ and hence the Hardy-Littlewood maximal
operator can be defined on $(X^{1/s})'$.
\end{remark}

Next, we recall some basic concepts associated with the Poisson integral.

\begin{definition}\label{poisson}
\begin{enumerate}
\item[{\rm(i)}] A distribution $f\in\cs'(\rn)$ is called
a \emph{bounded distribution} if, for any $\phi\in\cs(\rn)$,
$\phi\ast f\in L^\infty(\rn)$, where
$$L^\infty(\rn):=\left\{f \in \mathscr{M}(\rn):\
\|f\|_{L^\infty(\rn)}:=\esssup_{x\in\rn}|f(x)|<\infty\right\}.$$
\item[{\rm(ii)}] For any $(x,t)\in\rp$,
\begin{equation*}
P_t(x):=c_{(n)}\frac{t}{(t^2+|x|^2)^{(n+1)/2}}
\end{equation*}
is called the \emph{Poisson kernel}, where $c_{(n)}$
is the same as in \eqref{unit-b}.
\item[{\rm(iii)}] Assume that $f\in\cs'(\rn)$ is a bounded distribution.
The \emph{non-tangential maximal function} $M(f;P)$
of $f$ is defined by setting,
for any $x\in\rn$,
\begin{equation}\label{eq-max-p}
M(f;P)(x):=\sup_{\{(y,t)\in\rp:\ |y-x|<t\}}|P_t\ast f(y)|.
\end{equation}
\end{enumerate}
\end{definition}

\begin{remark}\label{re-ptf}
If $f\in L^p(\rn)$ with $p\in[1,\infty]$,
then, by the Young inequality, it is easy to show
that $f$ is a bounded distribution. Moreover,
if $f$ is a bounded distribution, then $P_t\ast f$ is
a well-defined, bounded, and smooth  harmonic
function on $\rp$ (see, for instance, \cite[p.\,90]{s93}).
\end{remark}

The following theorem establishes the Poisson integral
characterization of a bounded distribution in $H_X(\rn)$.

\begin{theorem}\label{thm-p}
Let $X$ be a BQBF space such that $M$ is bounded on $X^r$ for some $r\in(0,\infty)$.
Assume that $f\in\cs'(\rn)$ is a bounded distribution.
Then $f\in H_X(\rn)$ if and only if $M(f;P)\in X$, where
$M(f;P)$ is the same as in \eqref{eq-max-p}.
Moreover, there exist positive constants $C_1$ and $C_2$, independent
of $f$, such that
$$
C_1\|f\|_{H_X(\rn)}\leq\|M(f;P)\|_{X}\leq C_2\|f\|_{H_X(\rn)}.
$$
\end{theorem}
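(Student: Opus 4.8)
The plan is to prove both inequalities by comparing the Poisson non-tangential maximal function $M(f;P)$ with the Peetre-type maximal function $M_b^{\ast\ast}(f,\Phi)$ from Definition \ref{H} and with the grand maximal function of $f$, denoted here by $M_N(f)$ with $N$ large. The starting point is that, since $M$ is bounded on $X^r$ for some $r\in(0,\infty)$, by \cite[Theorem 3.1]{shyy} and the results of \cite{shyy} the quasi-norm $\|f\|_{H_X(\rn)}$ is, up to equivalent positive constants independent of $f$, unchanged if $M_b^{\ast\ast}(f,\Phi)$ is replaced by the radial maximal function, by the non-tangential maximal function of any fixed aperture, or by $M_N(f)$; in particular $\|f\|_{H_X(\rn)}$ depends neither on the choice of $b$ nor on the aperture. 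Thus I may freely switch among these equivalent quasi-norms throughout.

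\emph{Necessity} ($\|M(f;P)\|_X\ls\|f\|_{H_X(\rn)}$). Since the Poisson kernel $P$ is not a Schwartz function, I would decompose it into dyadic Schwartz pieces. Fix a radial $\eta\in\cs(\rn)$ with $\mathbf{1}_{B(\mathbf{0},1)}\le\eta\le\mathbf{1}_{B(\mathbf{0},2)}$ and write $P=\sum_{k=0}^\infty\Psi_k$, where $\Psi_0:=P\eta$ and, for $k\in\nn$, $\Psi_k:=P[\eta(2^{-k}\cdot)-\eta(2^{-k+1}\cdot)]$, so that $\Psi_k$ is supported in $\{x:\ |x|\sim2^k\}$. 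Using $|P(x)|\sim(1+|x|)^{-(n+1)}$ together with the corresponding bounds on its derivatives, one checks that each $(\Psi_k)_t$ is $c_k$ times the $2^kt$-dilation of a member of the normalizing family defining $M_N$, with $c_k\ls2^{-k}$. Since dilation commutes with the decomposition, $P_t\ast f=\sum_{k=0}^\infty(\Psi_k)_t\ast f$, and hence, for any $x\in\rn$,
\begin{equation*}
M(f;P)(x)\ls\sum_{k=0}^\infty c_k\,M_N(f)(x)\ls M_N(f)(x),
\end{equation*}
where the enlargement of the aperture caused by the supports $|x|\sim2^k$ is absorbed by the aperture-independence quoted above. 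Taking $\|\cdot\|_X$ and invoking $\|M_N(f)\|_X\sim\|f\|_{H_X(\rn)}$ yields the bound.

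\emph{Sufficiency} ($\|f\|_{H_X(\rn)}\ls\|M(f;P)\|_X$). Here I exploit that $u(x,t):=P_t\ast f(x)$ is harmonic on $\rp$ (Remark \ref{re-ptf}). By the interior gradient estimate for harmonic functions applied on $B((y,t),t/2)\subset\rp$, I obtain, for any $(y,t)\in\rp$, $t|\nabla u(y,t)|\ls\sup\{|u(z,s)|:\ |z-y|<t/2,\ |s-t|<t/2\}$, so that the gradient non-tangential maximal function $\sup_{|y-x|<t}t|\nabla u(y,t)|$ is dominated pointwise by $M(f;P)$ with a slightly larger aperture. Using that $u(\cdot,t)\to f$ in $\cs'(\rn)$ as $t\to0^+$ while $u(\cdot,t)\to0$ as $t\to\infty$ (both valid since $f$ is a bounded distribution), together with the identity $f=-\oi\partial_tu(\cdot,t)\,dt$ in $\cs'(\rn)$, I would then recover, for any fixed $\Phi\in\cs(\rn)$ with $\int_{\rn}\Phi\neq0$ and any $s\in(0,\infty)$, a representation of $\Phi_s\ast f$ as a superposition in $t$ of $\partial_tu(\cdot,t)$, which is pointwise dominated by the gradient non-tangential maximal function above. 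This gives $M_b^{\ast\ast}(f,\Phi)(x)\ls M(f;P)(x)$ up to a further harmless aperture change; taking $\|\cdot\|_X$ and using the aperture-independence once more completes the proof of Theorem \ref{thm-p}.

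\emph{Main obstacle.} The crux is the sufficiency direction: controlling the grand maximal function---which ranges over all admissible Schwartz kernels, including those with vanishing moments---by the non-tangential maximal function of the single, non-Schwartz Poisson kernel. The decisive inputs are the interior estimates for the harmonic function $u=P_t\ast f$ (to pass from $u$ to its derivatives $t\nabla u$) together with the equivalence of the $X$-quasi-norms of non-tangential maximal functions of different apertures, the latter relying essentially on the boundedness of $M$ on $X^r$.
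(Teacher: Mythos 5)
Your necessity argument is essentially the paper's: the authors invoke the estimate behind \cite[(2.1.39)]{gl14}, which is exactly your dyadic decomposition of $P$ into Schwartz bumps with summable normalizing constants $c_k\lesssim 2^{-k}$, followed by the grand maximal function characterization from \cite[Theorem 3.1(i)]{shyy} (no aperture change is even needed there, since $|y-x|<t$ implies $|y-x|<2^kt$). The problem is the sufficiency direction, where your route departs from the paper's and, as sketched, does not close. First, the assertion that $u(\cdot,t)\to 0$ as $t\to\infty$ is false for a general bounded distribution: take $f\equiv 1$, for which $P_t\ast f\equiv 1$ for every $t$; so the representation $f=-\int_0^\infty\partial_t u(\cdot,t)\,dt$ is simply not available. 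Second, even where such a representation holds, the only estimate your interior gradient bound supplies is $t\,|\partial_t u(y,t)|\lesssim M(f;P)$ (after an aperture adjustment), so the proposed majorant of $|\Phi_s\ast f(x)|$ has the form $\int_0^\infty(\cdots)\,\frac{dt}{t}$ with an integrand that is merely bounded; this integral diverges at both endpoints, and you provide no mechanism (cancellation, a second derivative, decay in $t$) to make it converge, let alone to produce a pointwise bound by $M(f;P)(x)$ at the same point $x$.

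The paper sidesteps all of this with the classical construction it cites from \cite[p.\,99]{s93}: one chooses $\Phi\in\cs(\rn)$ with $\int_{\rn}\Phi(x)\,dx\neq 0$ of the form $\Phi=\int_1^\infty P_s\,d\nu(s)$ for a suitable finite signed measure $\nu$ on $[1,\infty)$, so that $\Phi_t=\int_1^\infty P_{st}\,d\nu(s)$ and hence $M(f;\Phi)(x)\lesssim\sup_{t\in(0,\infty)}|P_t\ast f(x)|\leq M(f;P)(x)$ pointwise, with no harmonicity, no integration by parts in $t$, and no aperture change; the radial maximal function characterization \cite[Theorem 3.1(ii)]{shyy}, valid under the stated hypothesis that $M$ is bounded on $X^r$, then gives $\|f\|_{H_X(\rn)}\lesssim\|M(f;\Phi)\|_X\lesssim\|M(f;P)\|_X$. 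To repair your proof, replace the fundamental-theorem-of-calculus step by this superposition of dilated Poisson kernels (or simply cite the construction); the rest of your outline, including the reduction to the radial maximal function via the equivalences of \cite[Theorem 3.1]{shyy}, is then sound.
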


\begin{proof}
We first prove the necessity. To this end, fix a bounded
distribution $f\in\cs'(\rn)\cap H_X(\rn)$.
For any $N\in\nn$ and $\phi\in \cs(\rn)$, let
$$
p_N(\phi):=\sum_{\alpha\in\zn_+,\, |\alpha|<N}
\sup_{x\in\rn}(1+|x|)^N|\partial^\alpha\phi(x)|
$$
and
$$
\cf_N:=\left\{\phi\in\cs(\rn):\ p_N(\phi)\leq 1\right\}.
$$
For any $f\in \cs'(\rn)$, the \emph{grand maximal function}
$M_N(f)$ is defined by setting, for any $x\in\rn$,
$$
M_N(f)(x):=\sup\left\{|\phi_t\ast f(y)|:\ (x,t)\in\rp,\ |x-y|<t,\ \phi\in\cf_N\right\}.
$$
By \cite[Theorem 3.1(i)]{shyy}, we find that there exists an
$N\in\nn$ such that
\begin{equation}\label{eq-thm-p1}
\|M_N(f)\|_{X}\lesssim\|f\|_{H_X(\rn)}<\infty.
\end{equation}
On the other hand, from an argument similar to that used in the
estimation of \cite[(2.1.39)]{gl14}, we deduce that
$$
M(f;P)\lesssim M_N(f).
$$
Combining this, Definition \ref{bfs}(ii), and \eqref{eq-thm-p1},
we obtain
$$
\|M(f;P)\|_X\lesssim \|M_N(f)\|_X\lesssim\|f\|_{H_X(\rn)}<\infty.
$$
This finishes the proof of the necessity.

Next, we show the sufficiency. To this end, fix a bounded
distribution $f\in\cs'(\rn)$ such that $\|M(f;P)\|_X<\infty$.
By the same argument as that used in \cite[p.\,99]{s93}, we know that there exists
a $\Phi\in\cs(\rn)$, satisfying $\int_{\rn}\Phi(x)\,dx\neq 0$,
such that, for any $x\in\rn$,
$$
M(f;\Phi)(x)\lesssim \sup_{t\in(0,\infty)}|P_t\ast f(x)| \lesssim M(f;P)(x),
$$
where
\begin{equation}\label{2.6x}
M(f;\Phi)(x):=\sup_{t\in(0,\infty)}|\Phi_t\ast f(x)|.
\end{equation}
This, together with both \cite[Theorem 3.1(ii)]{shyy} and
Definition \ref{bfs}(ii), further implies that
$$
\|f\|_{H_X(\rn)}\lesssim\|M(f;\Phi)\|_X\lesssim\|M(f;P)\|_X<\infty,
$$
which completes the proof of the sufficiency and hence
Theorem \ref{thm-p}.
\end{proof}

\begin{remark}
In \cite[Theorem 3.3]{shyy}, Sawano et al. also
established the Poisson integral characterization of $H_X(\rn)$.
 We point out that Theorem
\ref{thm-p} is different from \cite[Theorem 3.3]{shyy} in the following sense.
\begin{enumerate}
\item[{\rm(i)}]
In \cite[Theorem 3.3]{shyy}, Sawano et al.  used
the quantity $\sup_{t\in(0,\infty)}|P_t\ast f|$ instead of $M(f;P)$.
\item[{\rm(ii)}] In \cite[Theorem 3.3]{shyy}, Sawano et al. needed an additional assumption
that there exists a positive constant
$C$ such that
\begin{equation}\label{u-low}
\inf_{x\in\rn}\|\mathbf{1}_{B(x,1)}\|_X\geq C.
\end{equation}
This is a very strong condition because \eqref{u-low}
does not hold true even when $X$ is some weighted Lebesgue space.
\item[{\rm(iii)}] In Theorem \ref{thm-p}, we assume, a priori,
that $f\in\cs'(\rn)$ is a bounded distribution. This is because,
for an arbitrary $f\in H_X(\rn)$, we cannot show that $f$
is a bounded distribution without any additional assumptions
(see also \cite[Remark 2.5]{ccyy16}).
\end{enumerate}
\end{remark}

Now, we introduce the concept of the Poisson integral $P_t\ast f$,
where $f\in \cs'(\rn)$ is a limit in $\cs'(\rn)$ of a
sequence $\{f_k\}_{k\in\nn}\subset L^2(\rn)$.

\begin{definition}\label{def-pi-o}
For any $f\in \cs'(\rn)$ and
$\{f_k\}_{k\in\nn}\subset L^2(\rn)$
satisfying $\lim_{k\to\infty}f_k=f$ in $\cs'(\rn)$
and for any $(x,t)\in\rp$, define
\begin{equation}\label{def-pi}
P_t\ast f(x):=\lim_{k\to\infty} P_t\ast f_k(x)
\end{equation}
pointwisely.
\end{definition}

The following lemma indicates
that \eqref{def-pi} is well defined for any $f\in\cs'(\rn)$.

\begin{lemma}\label{d-n}
Let $f\in \cs'(\rn)$. Then the following statements hold true.
\begin{enumerate}
\item[{\rm(i)}] There exists a sequence $\{f_k\}_{k\in\nn}\subset C_{\mathrm c}^\infty(\rn)$ such that
$\lim_{k\to\infty}f_k=f$ in $\cs'(\rn)$.
\item[{\rm(ii)}] If $\{f_k\}_{k\in\nn}\subset L^2(\rn)$
satisfies $\lim_{k\to\infty}f_k=f$ in $\cs'(\rn)$,
then, for any $(x,t)\in\rp$,
$\lim_{k\to\infty}P_t\ast f_k(x)$ exists.
\item[{\rm(iii)}] If
$\{f_k\}_{k\in\nn},\{g_k\}_{k\in\nn}\subset L^2(\rn)$
satisfies that
$\lim_{k\to\infty}f_k=f=\lim_{k\to\infty}g_k$ in $\cs'(\rn),$
then, for any $(x,t)\in\rp$,
\begin{equation}\label{eq-pfk-pgk}
\lim_{k\to\infty}P_t\ast f_k(x)=\lim_{k\to\infty}P_t\ast g_k(x).
\end{equation}
\end{enumerate}
\end{lemma}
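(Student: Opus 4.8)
The plan is to prove (i) by a standard mollify-and-truncate construction, and then to deduce both (ii) and (iii) from the single assertion that Poisson integrals of $L^2$ sequences converging to $0$ in $\cs'(\rn)$ vanish pointwise.

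For (i), I would fix $\zeta\in C_{\mathrm c}^\infty(\rn)$ with $\int_{\rn}\zeta(x)\,dx=1$ and, for any $k\in\nn$, set $\zeta_k:=k^n\zeta(k\,\cdot)$ and $\chi_k:=\chi(\cdot/k)$, where $\chi\in C_{\mathrm c}^\infty(\rn)$ satisfies $\chi\equiv1$ on $B(\mathbf{0},1)$. Since $\zeta_k(x-\cdot)\in\cs(\rn)$, the mollification $f\ast\zeta_k$, defined for any $x\in\rn$ by $f\ast\zeta_k(x):=\langle f,\zeta_k(x-\cdot)\rangle$, is a smooth function of at most polynomial growth, and hence $f_k:=\chi_k\,(f\ast\zeta_k)\in C_{\mathrm c}^\infty(\rn)$. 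To show $f_k\to f$ in $\cs'(\rn)$, I would fix $\phi\in\cs(\rn)$ and split
\[
\langle f_k,\phi\rangle-\langle f,\phi\rangle=\langle f\ast\zeta_k,\,(\chi_k-1)\phi\rangle+\lf[\langle f,\,\widetilde{\zeta_k}\ast\phi\rangle-\langle f,\phi\rangle\r],
\]
where $\widetilde{\zeta_k}(\cdot):=\zeta_k(-\cdot)$ and I have used $\langle f\ast\zeta_k,\psi\rangle=\langle f,\widetilde{\zeta_k}\ast\psi\rangle$. The second bracket tends to $0$ because $\widetilde{\zeta_k}$ is an approximate identity, so $\widetilde{\zeta_k}\ast\phi\to\phi$ in $\cs(\rn)$; the first term tends to $0$ because $(\chi_k-1)\phi\to0$ in $\cs(\rn)$ and convolution with $\widetilde{\zeta_k}$ is controlled uniformly in $k$ on each Schwartz seminorm.

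For (ii) and (iii), note first that $P_t\in L^2(\rn)$ --- indeed $P_t(y)\lesssim_t(1+|y|)^{-(n+1)}$, so $P_t\in L^p(\rn)$ for every $p\in[1,\infty]$ --- whence, for any $g\in L^2(\rn)$ and $(x,t)\in\rp$, the number $P_t\ast g(x)=\int_{\rn}P_t(x-y)g(y)\,dy=\langle g,P_t(x-\cdot)\rangle$ is well defined. I would then reduce everything to the claim that, if $h_k\in L^2(\rn)$ and $h_k\to0$ in $\cs'(\rn)$, then $P_t\ast h_k(x)\to0$ for every $(x,t)\in\rp$. Granting this claim, statement (iii) is immediate on taking $h_k:=f_k-g_k$. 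For (ii), I would argue that $\{P_t\ast f_k(x)\}_{k\in\nn}$ is Cauchy: if not, there are $\varepsilon_0\in(0,\infty)$ and indices $k_j<m_j$ with $|P_t\ast f_{k_j}(x)-P_t\ast f_{m_j}(x)|\ge\varepsilon_0$; but then $h_j:=f_{k_j}-f_{m_j}\in L^2(\rn)$ satisfies $h_j\to0$ in $\cs'(\rn)$, so the claim forces $P_t\ast h_j(x)\to0$, a contradiction. Thus (ii) and (iii) both follow from this one claim.

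To prove the claim, I would regularize the Poisson kernel: for any $R\in(0,\infty)$ put $P_t^{(R)}:=P_t\,\eta(\cdot/R)$, where $\eta\in C_{\mathrm c}^\infty(\rn)$ and $\eta\equiv1$ on $B(\mathbf{0},1)$, so that $P_t^{(R)}(x-\cdot)\in\cs(\rn)$ for each fixed $R$. Then
\[
P_t\ast h_k(x)=\lf\langle h_k,\,P_t^{(R)}(x-\cdot)\r\rangle+\lf\langle h_k,\,P_t(x-\cdot)-P_t^{(R)}(x-\cdot)\r\rangle,
\]
and, for each fixed $R$, the first term tends to $0$ as $k\to\infty$ because its test function is Schwartz and $h_k\to0$ in $\cs'(\rn)$. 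The whole difficulty is therefore to bound the tail term, supported in $\{y:\ |x-y|\gtrsim R\}$, uniformly in $k$, and then to let $R\to\infty$ (a Moore--Osgood-type interchange of the limits in $k$ and $R$). I expect this uniform tail estimate to be the main obstacle and the genuine heart of the lemma. The naive bound $|\langle h_k,P_t(x-\cdot)-P_t^{(R)}(x-\cdot)\rangle|\le\|h_k\|_{L^2(\rn)}\|P_t-P_t^{(R)}\|_{L^2(\rn)}$ is useless, since weak-$\ast$ convergence supplies no uniform bound on $\|h_k\|_{L^2(\rn)}$. Instead I would invoke the Banach--Steinhaus theorem to obtain $N\in\nn$ and $C\in(0,\infty)$, independent of $k$, with $|\langle h_k,\psi\rangle|\le C\,p_N(\psi)$ for all $\psi\in\cs(\rn)$, where $p_N$ is the Schwartz seminorm used in the proof of Theorem \ref{thm-p}, and then estimate $p_N(P_t(x-\cdot)-P_t^{(R)}(x-\cdot))$ by means of the pointwise decay $|\partial^\alpha P_t(y)|\lesssim_{t,\alpha}(1+|y|)^{-(n+1)-|\alpha|}$ of the Poisson kernel and its derivatives. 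The crux is that this decay must outweigh the polynomial weight $(1+|y|)^N$ built into $p_N$ on the tail; reconciling the order $N$ of the limiting distribution with the decay exponent $n+1$ of $P_t$ is precisely where the real work lies, and is the step most likely to require extra structure (for instance, that $f$ be a bounded distribution, so that $P_t\ast f$ is a priori well defined in the sense of Remark \ref{re-ptf}).
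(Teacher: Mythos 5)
Your treatment of (i) is correct (it is the standard mollify-and-truncate argument; the paper simply cites \cite[Proposition 2.3.23]{gl} for this), and your reduction of both (ii) and (iii) to the single claim that $P_t\ast h_k(x)\to0$ whenever $\{h_k\}_{k\in\nn}\subset L^2(\rn)$ tends to $0$ in $\cs'(\rn)$ is sound. The problem is that you have not proved that claim, and the route you sketch for it cannot be made to work. If the limiting distribution has order $N$, Banach--Steinhaus gives you $|\langle h_k,\psi\rangle|\le C\,p_N(\psi)$ with some fixed $N$ that may well exceed $n+1$; but then $p_N$ applied to the tail $P_t(x-\cdot)-P_t^{(R)}(x-\cdot)$ involves $\sup_{|y|\gtrsim R}(1+|y|)^N|P_t(y)|\gtrsim\sup_{|y|\gtrsim R}|y|^{N-n-1}$, which is not merely large but infinite once $N>n+1$. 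So the physical-space truncation plus Schwartz-seminorm estimate collapses exactly at the step you flagged as ``the real work'': the polynomial weight in $p_N$ always beats the fixed decay $(1+|y|)^{-(n+1)}$ of the Poisson kernel. Identifying the obstacle is not the same as overcoming it, and as written the proof of (ii) and (iii) is missing its central step.

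The paper's device, which you should compare with, is the Fourier-side decomposition of the Poisson kernel from \cite[p.\,90]{s93}: there exist $\phi,\psi\in\cs(\rn)$ and $h\in L^1(\rn)$ such that $P_t=h_t\ast\phi_t+\psi_t$ for every $t\in(0,\infty)$. This replaces the pairing of $f_k-f_j$ against the non-Schwartz kernel $P_t(x-\cdot)$ by the pairing against the genuine Schwartz functions $\psi_t(x-\cdot)$ and $\phi_t(z-\cdot)$, the latter integrated against the $L^1$ kernel $h_t$; the convergence in $\cs'(\rn)$ then acts directly on Schwartz test functions rather than on a truncation of $P_t$. Note that even this argument is delicate at the point where the limit is passed through the convolution with $h_t$ (one needs control of $\phi_t\ast(f_k-f_j)$ at all points $x-y$, not only at $x$), so your instinct that some additional structure beyond bare convergence in $\cs'(\rn)$ is lurking here --- for instance the boundedness of the distributions involved, as in Remark \ref{re-ptf} --- is a reasonable one; but in any case the decomposition $P_t=h_t\ast\phi_t+\psi_t$ is the missing idea, and without it (or a substitute) your proof of (ii) and (iii) does not close.
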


\begin{proof}
(i) is just \cite[Proposition 2.3.23]{gl}; we omit details here.
To show (ii), fix both an $f\in \cs'(\rn)$ and a
sequence $\{f_k\}_{k\in\nn}\subset L^2(\rn)$
such that $\lim_{k\to\infty}f_k=f$ in $\cs'(\rn)$.
Notice that, by \cite[p.\,90]{s93}, we find that
there exist $\phi,\psi\in\cs(\rn)$ and $h\in L^1(\rn)$ such that, for any $(x,t)\in\rp$,
$$
P_t(x)= h_t\ast \phi_t(x)+\psi_t(x).
$$
Thus, for any $k,j\in\nn$ and $(x,t)\in\rp$, we have
\begin{align*}
\left|P_t\ast f_k(x)-P_t\ast f_j(x)\right|
&=\left| h_t\ast \phi_t\ast(f_k-f_j)(x)+\psi_t\ast(f_k-f_j)(x)\right|.
\end{align*}
Since $\{f_k\}_{k\in\nn}$ converges to $f$ in $\cs'(\rn)$,
it follows that, for any $\epsilon\in(0,\infty)$ and
$\varphi\in\cs(\rn)$, there exists
a $K\in\nn$ such that, for any $k>K$ and $j>K$,
$$
|\langle f_k-f_j,\varphi\rangle|<\epsilon.
$$
Thus, for any $(x,t)\in\rp$, we can find a $K_{(x,t)}\in \nn$
such that, for any $k>K_{(x,t)}$ and $j>K_{(x,t)}$,
$$
|\phi_t\ast (f_k-f_j)(x)|<\epsilon
\ \text{and}\
|\psi_t\ast (f_k-f_j)(x)|<\epsilon
$$
and hence
\begin{equation}\label{eq-pfkfj}
\left|P_t\ast f_k(x)-P_t\ast f_j(x)\right|<
(|h_t|\ast \epsilon)(x) +\epsilon=\left[\|h\|_{L^1(\rn)}+1\right]\epsilon.
\end{equation}
This shows that $\{P_t\ast f_k(x)\}_{k\in\nn}$ is a
Cauchy sequence for any $(x,t)\in\rp$. Therefore,
$$\lim_{k\to\infty}P_t\ast f_k(x)$$ exists, which
completes the proof of (ii).

Finally, we show (iii). To this end, by an argument
similar to that used in the estimation of \eqref{eq-pfkfj}
via $f_j$ replaced by $g_k$,
we find that, for any $\epsilon\in(0,\infty)$ and $(x,t)\in\rp$,
there exists a $\widetilde{K}_{(x,t)}\in(0,\infty)$ such that,
for any $k>\widetilde{K}_{(x,t)}$,
$$
\left|P_t\ast f_k(x)-P_t\ast g_k(x)\right|<
\left[\|h\|_{L^1(\rn)}+1\right]\epsilon,
$$
which implies that \eqref{eq-pfk-pgk} holds true.
This finishes the proof of (iii) and hence Lemma \ref{d-n}.
\end{proof}

Next, we recall the concept of absolutely continuous quasi-norms as follows
(see, for instance, \cite[Definition 3.1]{br}).

\begin{definition}
Let $X$ be a BQBF space. A function $f\in X$ is said to have an
\emph{absolutely continuous quasi-norm} in $X$
if $\|f\mathbf{1}_{E_j}\|_X\downarrow 0$ as $j\to\infty$
whenever $\{E_j\}_{j=1}^\infty$ is a sequence of measurable sets that
satisfy $E_j \supset E_{j+1}$ for any $j \in \nn$
and $\bigcap_{j=1}^\infty E_j = \emptyset$.
Moreover, $X$ is said to have an \emph{absolutely
continuous quasi-norm} if, for any $f\in X$, $f$  has an
absolutely continuous quasi-norm in $X$.
\end{definition}

\begin{remark}
We point out that
many function spaces  such as Lebesgue spaces, Lorentz spaces, weighted
Lebesgue spaces, Herz spaces,
variable exponent Lebesgue spaces, and Orlicz-slice space,
have absolutely continuous quasi-norms, but  the Morrey space might have
no absolutely continuous quasi-norm (see
\cite[p.\,10]{shyy} and \cite[Remark 3.4]{wyy20} for more details).
\end{remark}

The following conclusion is a combination of
\cite[Theorems 3.6 and 3.7 and Remark 3.12]{shyy}; we omit the details here.

\begin{lemma}\label{den-h}
Assume that $X$ is a BQBF space and satisfies both Assumptions \ref{a} and
\ref{a2}.
If $f\in H_X(\rn)$, then there exists a sequence
$\{f_k\}_{k\in\nn}\subset L^2(\rn)\cap H_X(\rn)$
and a positive constant $C$ such that
\begin{equation}\label{fk-f}
\lim_{k\to\infty} f_k=f
\end{equation}
in $\cs'(\rn)$ and, for any $k\in\nn$, $\|f_k\|_{H_X(\rn)}\leq C\|f\|_{H_X(\rn)}$. Moreover, if $X$ has an absolutely
continuous quasi-norm, then  \eqref{fk-f} holds true in $H_X(\rn)$.
\end{lemma}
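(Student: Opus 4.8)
The plan is to read the statement off the atomic characterization of $H_X(\rn)$, which is exactly what Assumptions \ref{a} and \ref{a2} are there to guarantee. First I would invoke the atomic decomposition from \cite[Theorem 3.6]{shyy}: for the given $f\in H_X(\rn)$ there exist nonnegative coefficients $\{\lambda_j\}_{j\in\nn}$ and a family of $(X,\infty,d)$-atoms $\{a_j\}_{j\in\nn}$, each $a_j$ supported in a ball $B_j$ with $\|a_j\|_{L^\infty(\rn)}\le\|\mathbf{1}_{B_j}\|_X^{-1}$ and with vanishing moments up to order $d$, such that $f=\sum_{j\in\nn}\lambda_j a_j$ in $\cs'(\rn)$ and, with $s\in(0,1]$ as in Assumption \ref{a},
$$
\left\|\left\{\sum_{j\in\nn}\left[\frac{\lambda_j\mathbf{1}_{B_j}}{\|\mathbf{1}_{B_j}\|_X}\right]^{s}\right\}^{1/s}\right\|_X\lesssim\|f\|_{H_X(\rn)}.
$$
I would then set $f_k:=\sum_{j=1}^k\lambda_j a_j$ for each $k\in\nn$. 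Since every $a_j$ is bounded with compact support, each $a_j\in L^2(\rn)$, and hence the finite sum $f_k\in L^2(\rn)$; this is the point where taking the atoms to be $(X,\infty,d)$-atoms, rather than general $(X,q,d)$-atoms, is convenient.

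Next I would establish the $\cs'(\rn)$-convergence together with the uniform norm bound. The convergence $\lim_{k\to\infty}f_k=f$ in $\cs'(\rn)$ is immediate, being the tail-convergence of the atomic series in $\cs'(\rn)$. For the norm bound I would apply the atomic synthesis estimate of \cite[Theorem 3.6]{shyy} to $f_k$ and then discard the tail of the coefficient sum using the monotonicity of the quasi-norm, Definition \ref{bfs}(ii):
$$
\|f_k\|_{H_X(\rn)}\lesssim\left\|\left\{\sum_{j=1}^k\left[\frac{\lambda_j\mathbf{1}_{B_j}}{\|\mathbf{1}_{B_j}\|_X}\right]^{s}\right\}^{1/s}\right\|_X\le\left\|\left\{\sum_{j\in\nn}\left[\frac{\lambda_j\mathbf{1}_{B_j}}{\|\mathbf{1}_{B_j}\|_X}\right]^{s}\right\}^{1/s}\right\|_X\lesssim\|f\|_{H_X(\rn)},
$$
which yields $f_k\in L^2(\rn)\cap H_X(\rn)$ and the asserted bound $\|f_k\|_{H_X(\rn)}\le C\|f\|_{H_X(\rn)}$ with $C$ independent of $k$.

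Finally, under the additional hypothesis that $X$ has an absolutely continuous quasi-norm, I would upgrade the convergence to $H_X(\rn)$. Writing $f-f_k=\sum_{j>k}\lambda_j a_j$ and applying the same synthesis estimate to this tail gives $\|f-f_k\|_{H_X(\rn)}\lesssim\|g_k\|_X$, where $g_k:=\{\sum_{j>k}[\lambda_j\mathbf{1}_{B_j}/\|\mathbf{1}_{B_j}\|_X]^{s}\}^{1/s}$. Since $s\le1$, the functions $g_k$ decrease to $0$ pointwise almost everywhere and are dominated by the fixed $g:=\{\sum_{j\in\nn}[\lambda_j\mathbf{1}_{B_j}/\|\mathbf{1}_{B_j}\|_X]^{s}\}^{1/s}\in X$; the dominated-convergence consequence of the absolute continuity of the quasi-norm then forces $\|g_k\|_X\to0$, whence $\|f-f_k\|_{H_X(\rn)}\to0$. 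I expect the genuinely delicate point to be the tail synthesis estimate $\|\sum_{j>k}\lambda_j a_j\|_{H_X(\rn)}\lesssim\|g_k\|_X$ holding uniformly in $k$, i.e. that the atomic-to-$H_X$ synthesis operator is bounded on every tail with a constant independent of $k$; this is precisely the content supplied by the Fefferman--Stein inequality \eqref{ma} together with the dual maximal bound \eqref{ma21}, which make the atomic quasi-norm and the grand-maximal-function quasi-norm comparable, and it is why the whole statement follows by combining \cite[Theorems 3.6 and 3.7 and Remark 3.12]{shyy}.
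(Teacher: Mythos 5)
Your proposal is correct and is essentially the paper's own argument: the paper proves Lemma \ref{den-h} by simply declaring it ``a combination of \cite[Theorems 3.6 and 3.7 and Remark 3.12]{shyy}'' and omitting the details, and your write-up is exactly the expansion of that combination (atomic decomposition into $(X,\infty,d)$-atoms, partial sums as the $L^2\cap H_X$ approximants, the synthesis estimate for the uniform bound, and absolute continuity of the quasi-norm to kill the tail). No gap to report.
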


From both Definition \ref{def-pi-o} and Lemma \ref{den-h},
we deduce that, if $X$ is a BQBF space and  satisfies both Assumptions \ref{a} and
\ref{a2}, then, for
any $f\in H_X(\rn)$, $P_t\ast f$ is well defined.
Moreover, if $X$ has an absolutely continuous quasi-norm,
then we have the following conclusion.

\begin{proposition}\label{well-def}
Assume that $X$ is a BQBF space, satisfies both Assumptions \ref{a} and
\ref{a2}, and has an absolutely continuous
quasi-norm. Then, for any compact set $K\subset \rp$, \eqref{def-pi}
converges uniformly on $K$.
\end{proposition}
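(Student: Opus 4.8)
The plan is to use the particular approximating sequence furnished by Lemma \ref{den-h} and to upgrade the pointwise convergence of Lemma \ref{d-n}(ii) to uniform convergence on $K$, by proving a single estimate for $P_t\ast g$ that is uniform over $K$ and controlled by $\|g\|_{H_X(\rn)}$. First I would record two geometric facts about the compact set $K\subset\rp$: since the projections of $K$ onto the $t$-axis and onto $\rn$ are compact subsets of $(0,\infty)$ and $\rn$, respectively, there exist $a\in(0,\infty)$ and $R\in(0,\infty)$ such that $t\ge a$ and $|x|\le R$ for every $(x,t)\in K$. Because $X$ has an absolutely continuous quasi-norm, Lemma \ref{den-h} provides a sequence $\{f_k\}_{k\in\nn}\subset L^2(\rn)\cap H_X(\rn)$ with $f_k\to f$ both in $\cs'(\rn)$ and in $H_X(\rn)$; in particular $\{f_k\}_{k\in\nn}$ is a Cauchy sequence in $H_X(\rn)$. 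By Definition \ref{def-pi-o} together with Lemma \ref{d-n}, $P_t\ast f$ is the pointwise limit of $P_t\ast f_k$, so it suffices to show that $\{P_t\ast f_k\}_{k\in\nn}$ is uniformly Cauchy on $K$.

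The core of the argument is the following uniform estimate: for every bounded distribution $g\in\cs'(\rn)\cap H_X(\rn)$ (for instance $g=f_k-f_j\in L^2(\rn)$, which is a bounded distribution by Remark \ref{re-ptf}),
$$
\sup_{(x_0,t_0)\in K}|P_{t_0}\ast g(x_0)|\ls\|g\|_{H_X(\rn)}.
$$
To prove it, fix $(x_0,t_0)\in K$. For any $x\in B(x_0,a)$ we have $|x-x_0|<a\le t_0$, so, by the definition of the non-tangential maximal function in \eqref{eq-max-p} (the pair $(x_0,t_0)$ being admissible in the supremum defining $M(g;P)(x)$), $|P_{t_0}\ast g(x_0)|\le M(g;P)(x)$. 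Hence $M(g;P)\ge|P_{t_0}\ast g(x_0)|\mathbf{1}_{B(x_0,a)}$ pointwise, and Definition \ref{bfs}(ii) yields
$$
|P_{t_0}\ast g(x_0)|\,\|\mathbf{1}_{B(x_0,a)}\|_X\le\|M(g;P)\|_X.
$$
I would then bound $\|\mathbf{1}_{B(x_0,a)}\|_X$ from below uniformly in $x_0$: covering the compact set $\{x_0\in\rn:\ |x_0|\le R\}$ by finitely many balls $B(z_i,a/2)$ with $1\le i\le M$, any $B(x_0,a)$ with $|x_0|\le R$ contains some $B(z_i,a/2)$, so Definition \ref{bfs}(ii) gives $\|\mathbf{1}_{B(x_0,a)}\|_X\ge c_0:=\min_{1\le i\le M}\|\mathbf{1}_{B(z_i,a/2)}\|_X$, where $c_0\in(0,\infty)$ because each characteristic function is nonzero and hence has positive quasi-norm by Remark \ref{rem-ball-B}(i). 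Finally, since Assumption \ref{a}, applied to a single function, shows that $M^{(\theta)}$ is bounded on $X$, equivalently that $M$ is bounded on $X^{1/\theta}$, Theorem \ref{thm-p} applies to the bounded distribution $g$ and gives $\|M(g;P)\|_X\ls\|g\|_{H_X(\rn)}$. Combining the three displays proves the claimed uniform estimate.

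With this in hand, applying the estimate to $g=f_k-f_j$ gives
$$
\sup_{(x,t)\in K}|P_t\ast f_k(x)-P_t\ast f_j(x)|\ls\|f_k-f_j\|_{H_X(\rn)}\to0
$$
as $k,j\to\infty$, so $\{P_t\ast f_k\}_{k\in\nn}$ is uniformly Cauchy on $K$ and therefore converges uniformly there to its pointwise limit $P_t\ast f$, which is exactly the assertion that \eqref{def-pi} converges uniformly on $K$. I expect the main obstacle to be precisely the passage from the pointwise bound of Lemma \ref{d-n}(ii) to one uniform over $K$: the threshold $K_{(x,t)}$ there depends on the point, and the new ingredient that removes this dependence is the uniform lower bound $c_0$ for $\|\mathbf{1}_{B(x_0,a)}\|_X$, which relies crucially on both $t$ being bounded away from $0$ on $K$ (so that $B(x_0,a)$ sits inside the cone defining $M(g;P)$) and the monotonicity of the quasi-norm of a BQBF space.
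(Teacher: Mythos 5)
Your proof is correct, and while its overall skeleton (take the approximating sequence of Lemma \ref{den-h}, which is Cauchy in $H_X(\rn)$, and show $\sup_{(x,t)\in K}|P_t\ast(f_k-f_j)(x)|\ls_K\|f_k-f_j\|_{H_X(\rn)}$) matches the paper's, the key pointwise-to-norm estimate is obtained by a genuinely different mechanism. The paper embeds $X$ into a weighted Lebesgue space $L^s_w(\rn)$ with $w\in A_1(\rn)$ via Lemma \ref{emb}, and then averages the constant $|P_t\ast(f_k-f_j)(x)|^s$ over a ball (in fact over the subset $E_{k,j}$ of large measure) against $w\,dy$, dominating the integrand by $[M(f_k-f_j;P)(y)]^s$; the constant in its estimate is $[\mu(B(x_0,t_0/4))]^{-1/s}$. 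You instead work entirely inside the lattice $X$: from $M(g;P)\ge|P_{t_0}\ast g(x_0)|\mathbf{1}_{B(x_0,a)}$ and Definition \ref{bfs}(ii) you get $|P_{t_0}\ast g(x_0)|\,\|\mathbf{1}_{B(x_0,a)}\|_X\le\|M(g;P)\|_X$, and the needed uniform positive lower bound on $\|\mathbf{1}_{B(x_0,a)}\|_X$ over $\{|x_0|\le R\}$ follows from a finite covering by balls $B(z_i,a/2)$, monotonicity, and Remark \ref{rem-ball-B}(i). This bypasses Lemma \ref{emb} and the Muckenhoupt-weight machinery entirely, at the price of only the axioms of a BQBF space plus Theorem \ref{thm-p} (whose hypothesis you correctly verify from Assumption \ref{a} applied to a single function); it is in effect a local, automatic version of the lower bound \eqref{u-low} that the paper criticizes as too restrictive globally. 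Both arguments need $g=f_k-f_j\in L^2(\rn)$ to be a bounded distribution so that $M(g;P)$ and Theorem \ref{thm-p} apply, and both (like the paper) establish uniform convergence only for the particular approximating sequence furnished by Lemma \ref{den-h}, which is all that is used later. Your route is the more elementary and self-contained of the two for this particular proposition.
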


To prove Proposition \ref{well-def}, we need  the
following concept of weighted Lebesgue spaces.

\begin{definition}
For any $q\in[1,\fz]$, denote by $A_q(\rn)$ the class of all \emph{Muckenhoupt weights} (see, for instance,
\cite[Definitions 7.1.1 and 7.1.3]{gl} for its definition). For any $p\in(0,\fz)$ and $w\in A_\fz(\rn)$,
the \emph{weighted Lebesgue space} $L^p_w(\rn)$ is defined by setting
$$L^p_w(\rn):=\left\{f \in \mathscr{M}(\rn):\
\|f\|_{L^p_w(\rn)}:=\left[\int_{\rn}|f(x)|^pw(x)\,dx\right]^{1/p}<\infty\right\}.$$
\end{definition}

The following lemma is just \cite[Lemma 4.7]{cwyz}.

\begin{lemma}\label{emb}
Let $X$ be a BQBF space. Assume that
there exists an $s\in(0, \infty)$ such that $X^{1/s}$ is
a ball Banach function space and $M$ is bounded on $(X^{1/s})'$.
Then there exists an $\varepsilon\in(0, 1)$
such that $X$ continuously embeds into $L^s_w(\rn)$
with $w:= [M(\mathbf{1}_{B(\mathbf{0},1)})]^\varepsilon$.
\end{lemma}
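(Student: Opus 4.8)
The plan is to realize the embedding through the duality between the ball Banach function space $Y:=X^{1/s}$ and its associate space $Y':=(X^{1/s})'$, and then to recognize the weight $w$ as a \emph{fractional} maximal function of $\mathbf{1}_{B(\mathbf{0},1)}$. First I would record a two-line reduction. Since $X^{1/s}$ is a ball Banach function space by hypothesis, the definition of the $1/s$-convexification gives $\|f\|_X^s=\|\,|f|^s\,\|_{Y}$ for every $f\in X$, so that $|f|^s\in Y$; moreover $Y'$ is again a ball Banach function space, whence $\mathbf{1}_{B(\mathbf{0},1)}\in Y'$. The Hölder inequality for $Y$ and its associate space then yields
\[
\|f\|_{L^s_w(\rn)}^s=\int_{\rn}|f(x)|^s w(x)\,dx\le\|\,|f|^s\,\|_{Y}\,\|w\|_{Y'}=\|f\|_X^s\,\|w\|_{Y'},
\]
where a priori $\|w\|_{Y'}\in(0,\fz]$. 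Hence $X$ embeds continuously into $L^s_w(\rn)$ with operator norm at most $\|w\|_{Y'}^{1/s}$, and the whole lemma reduces to producing some $\varepsilon\in(0,1)$ for which both $\|w\|_{Y'}<\fz$ (finiteness of the right-hand side) and $w\in A_\fz(\rn)$ (so that $L^s_w(\rn)$ is a bona fide weighted Lebesgue space in the sense of the definition above).

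The key observation I would exploit is the algebraic identity
\[
w=\left[M(\mathbf{1}_{B(\mathbf{0},1)})\right]^\varepsilon=M^{(1/\varepsilon)}\!\left(\mathbf{1}_{B(\mathbf{0},1)}\right),
\]
valid because $\mathbf{1}_{B(\mathbf{0},1)}$ is fixed by taking any positive power. Consequently, once I know that the powered maximal operator $M^{(1/\varepsilon)}$ is bounded on $Y'$ for some $\varepsilon\in(0,1)$, this identity immediately gives $\|w\|_{Y'}=\|M^{(1/\varepsilon)}(\mathbf{1}_{B(\mathbf{0},1)})\|_{Y'}\ls\|\mathbf{1}_{B(\mathbf{0},1)}\|_{Y'}<\fz$, since $\|\mathbf{1}_{B(\mathbf{0},1)}\|_{Y'}$ is finite for the ball Banach function space $Y'$. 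By the $p$-convexification calculus, boundedness of $M^{(1/\varepsilon)}$ on $Y'$ is equivalent to boundedness of the ordinary operator $M$ on the strictly less convex space $(Y')^{\varepsilon}$.

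Thus the heart of the matter, and the step I expect to be the main obstacle, is to upgrade the hypothesis ``$M$ is bounded on $Y'=(Y')^{1}$'' to ``$M$ is bounded on $(Y')^{\varepsilon}$ for some $\varepsilon\in(0,1)$''. This is a self-improvement (``openness'') property of the boundedness of $M$ on a ball Banach function space, which I would take from the theory of such spaces (cf.\ \cite{shyy}); it is exactly the abstract counterpart of the elementary fact that, for $X=L^p(\rn)$, one has $Y'=L^{(p/s)'}(\rn)$ with $(p/s)'>1$, so that $M$ is automatically bounded on $(Y')^{\varepsilon}=L^{\varepsilon(p/s)'}(\rn)$ for every $\varepsilon\in(1/(p/s)',1)$. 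Fixing such an $\varepsilon\in(0,1)$ settles $\|w\|_{Y'}<\fz$. Finally, for this same $\varepsilon\in(0,1)$, the Coifman--Rochberg theorem (see, for instance, \cite{gl}) guarantees that $w=[M(\mathbf{1}_{B(\mathbf{0},1)})]^\varepsilon$ is an $A_1(\rn)$ weight, hence $w\in A_\fz(\rn)$ and $L^s_w(\rn)$ is well defined. Combining this with the reduction of the first paragraph yields the desired continuous embedding $X\hookrightarrow L^s_w(\rn)$.
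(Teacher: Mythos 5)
Your argument is correct and, up to one citation, it is precisely the proof of this lemma: the paper itself gives no proof, quoting the statement verbatim from \cite[Lemma 4.7]{cwyz}, and the proof there runs exactly along your lines --- the H\"older inequality for the pair $X^{1/s}$ and $(X^{1/s})'$, the identity $[M(\mathbf{1}_{B(\mathbf{0},1)})]^\varepsilon=M^{(1/\varepsilon)}(\mathbf{1}_{B(\mathbf{0},1)})$, and the boundedness of $M$ on the $\varepsilon$-convexification of $(X^{1/s})'$ for some $\varepsilon\in(0,1)$. The one correction concerns the step you rightly single out as the heart of the matter: the self-improvement property (``$M$ bounded on a ball Banach function space $Y$ implies $M$ bounded on $Y^{\varepsilon}$ for some $\varepsilon\in(0,1)$'') is not in \cite{shyy}; it is the theorem of Lerner and Ombrosi [J.\ Geom.\ Anal.\ 30 (2020), 1011--1027, Theorem 1.2], whose proof carries over verbatim to ball Banach function spaces, and this is exactly the tool invoked in \cite{cwyz}. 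Your closing verification that $w\in A_1(\rn)$ via the Coifman--Rochberg theorem is likewise what the paper records separately in Remark \ref{def-w}(ii).
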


\begin{remark}\label{def-w}
\begin{enumerate}
\item[{\rm(i)}] Let $X$ be a BQBF space satisfying Assumption \ref{a2}.
Then, by both \cite[Lemma 2.10]{cwyz} and \eqref{ma21},
we find that $M$ is bounded on $(X^{1/s})'$.
\item[{\rm(ii)}]Let $w$ be the same as in Lemma \ref{emb}.
By \cite[Theorem 7.2.7]{gl}, we know that $w\in A_1(\rn)$.
For any measurable set $E\subset \rn$, let
\begin{equation}\label{mu}
\mu(E):=\int_Ew(x)\,dx.
\end{equation}
From \cite[Proposition 7.1.5]{gl}, it follows that $\mu$
is a \emph{doubling measure}.
For any $p\in(0,\fz)$, let
$$
L^p(\mu):=\left\{f \in \mathscr{M}(\rn):\
\|f\|_{L^p(\mu)}:=\left[\int_{\rn}|f(x)|^p\,d\mu(x)\right]^{1/p}<\infty\right\}.
$$
It is easy to show that $L^p(\mu)=L^p_w(\rn)$  with equivalent
quasi-norms.
\end{enumerate}
\end{remark}

Next, we show Proposition \ref{well-def}.

\begin{proof}[Proof of Proposition \ref{well-def}]
To show this proposition, fix an $(x_0,t_0)\in\rp$
and an $f\in H_X(\rn)$. By Lemma \ref{den-h}, we
find a sequence $\{f_k\}_{k\in\nn}\subset
L^2(\rn)\cap H_X(\rn)$ such that
$\lim_{k\to\infty} f_k =f$ in both $H_X(\rn)$ and $\cs'(\rn)$.
We first claim that \eqref{def-pi} converges uniformly on
$B(x_0,t_0/4)\times(3t_0/4,5t_0/4)$.
Indeed, for any $x,y\in B(x_0,t_0/4)$ and
$t\in(3t_0/4,5t_0/4)$, we have
$$
|x-y|\le|x-x_0|+|x_0-y|<\frac{t_0}{2}<t.
$$
From this, we deduce that, for any $x,y\in B(x_0,t_0/4)$,
$t\in(3t_0/4,5t_0/4)$, and $k,j\in\nn$,
\begin{align}\label{eq-p-1}
&\left|P_t\ast f_k(x)-P_t\ast f_j(x)\right|\\
&\quad=\left|P_t\ast (f_k-f_j)(x)\right|\leq \sup_{\{(z,t)\in\rp:\ |z-y|<t\}}
\left|P_t\ast (f_k-f_j)(z)\right|\notag\\
&\quad= M(f_k-f_j;P)(y),\notag
\end{align}
where $M(f_k-f_j;P)$ is the same as in \eqref{eq-max-p}
via replacing $f$ by $f_k-f_j$.
For any $k,j\in\nn$, let
$$
E_{k,j}:=\left\{y\in B\left(x_0,\frac{t_0}{4}\right):\
M(f_k-f_j;P)(y)\leq 1\right\}.
$$
By this, \eqref{mu}, Lemma \ref{emb}, Theorem \ref{thm-p},
and the fact that $\{f_k\}_{k\in\nn}$
is a Cauchy sequence in $H_X(\rn)$, we conclude that
\begin{align*}
\mu\left(B\left(x_0,\frac{t_0}{4}\right)\setminus E_{k,j}\right)
&=\int_{B(x_0,t_0/4)\setminus E_{k,j}}w(y)\,dy
\leq \int_{\rn}\left[M(f_k-f_j;P)(y)\right]^sw(y)\,dy\\
&=\left\|M(f_k-f_j;P)\right\|_{L^s_w(\rn)}^s\lesssim \left\|M(f_k-f_j;P)\right\|_{X}^s\\
&\lesssim \left\|f_k-f_j\right\|_{H_X(\rn)}^s\to 0
\end{align*}
as $k,j\to\infty$, where $w$ and $s$ are the same as in Lemma \ref{emb}.
This implies that there exists a positive
constant $\widetilde{L}$, depending on both $x_0$ and $t_0$, such that,
for any $k>\widetilde{L}$ and $j>\widetilde{L}$,
$$
\mu\left(B\left(x_0,\frac{t_0}{4}\right)\setminus E_{k,j}\right)
<\frac{1}{2}\mu\left(B\left(x_0,\frac{t_0}{4}\right)\right)
$$
and hence
\begin{equation}\label{mu-ekj}
\mu\left(E_{k,j}\right)
\ge \frac{1}{2}\mu\left(B\left(x_0,\frac{t_0}{4}\right)\right).
\end{equation}
Since $\{f_k\}_{k\in\nn}$
is a Cauchy sequence in $H_X(\rn)$, it follows that,
for any $\epsilon\in(0,\infty)$, there exists an
$\widetilde{L}_0\in(\widetilde{L},\infty)$ such that,
for any $k>\widetilde{L}_0$ and $j>\widetilde{L}_0$,
$$
\|f_k-f_j\|_{H_X(\rn)}<\epsilon.
$$
Using this, \eqref{eq-p-1}, \eqref{mu-ekj}, Lemma \ref{emb},
and Theorem \ref{thm-p}, we find that,
for any $x\in B(x_0,t_0/4)$,   $t\in(3t_0/4,5t_0/4)$,
$k>\widetilde{L}_0$, and $j>\widetilde{L}_0$,
\begin{align}\label{2.10x}
&\left|P_t\ast f_k(x)-P_t\ast f_j(x)\right|\\
&\quad=\left\{\frac{1}{\mu(E_{k,j})}\int_{E_{k,j}}\left|P_t\ast f_k(x)-P_t\ast f_j(x)\right|^sw(y)\,dy\right\}^{1/s}\notag\\
&\quad\leq\left\{\frac{1}{\mu(E_{k,j})}\int_{E_{k,j}}\left|M(f_k-f_j;P)(y)\right|^sw(y)\,dy\right\}^{1/s}\notag\\
&\quad\lesssim \|M(f_k-f_j;P)\|_{L^s_w(\rn)}
\lesssim\|M(f_k-f_j;P)\|_{X}\lesssim
\|f_k-f_j\|_{H_X(\rn)}\lesssim \epsilon,\notag
\end{align}
where the implicit positive constants may depend on both $x_0$ and $t_0$.
This, together with \cite[Theorem 7.8]{r76}, implies that \eqref{def-pi}
converges uniformly on
$B(x_0,t_0/4)\times(3t_0/4,5t_0/4)$ and
hence finishes the proof of the above claim.

Now, for any compact set $K\subset \rp$, from
$$
K\subset \bigcup_{(x,t)\in K} \left[B\left(x,\frac{t}{4}\right)
\times\left(\frac{3t}{4},\frac{5t}{4}\right)\right],
$$
it follows that there exists an $m\in\nn$ and finite
points $\{(x_i,t_i)\}_{i=0}^m\subset K$ such that
$$
K\subset \bigcup_{i=0}^m \left[B\left(x_i,\frac{t_i}{4}\right)
\times\left(\frac{3t_i}{4},\frac{5t_i}{4}\right)\right].
$$
By the above claim, for any $\epsilon\in(0,\infty)$
and $i\in\{1,\dots,m\}$, we can find an $L_i\in(0,\infty)$
such that, for any $(x,t)\in B(x_i,t_i/4)\times(3t_i/4,5t_i/4)$,
$k>L_i$, and $j>L_i$,
\begin{equation}\label{eq-pfkj}
\left|P_t\ast f_k(x)-P_t\ast f_j(x)\right|<\epsilon.
\end{equation}
Let $L:=\max\{L_1,\dots,L_m\}$. Then,
for any $(x,t)\in K$, $k>L$, and $j>L$, \eqref{eq-pfkj} holds true.
From this and \cite[Theorem 7.8]{r76},
we deduce that \eqref{def-pi}
converges uniformly on $K$. This finishes the proof
of Proposition \ref{well-def}.
\end{proof}

Using both Lemma \ref{d-n} and Proposition \ref{well-def},
we obtain the following conclusions.

\begin{corollary}\label{co-pt}
Assume that $X$ is a BQBF space and satisfies both Assumptions \ref{a} and
\ref{a2}.
\begin{enumerate}
\item[{\rm (i)}] Then, for any $(x,t)\in\rp$ and $f,g\in H_X(\rn)$,
$P_t\ast (f+g)(x)=P_t\ast f(x)+P_t\ast g(x).$
\item[{\rm (ii)}] There exists a positive constant
$C$ such that, for any $t\in(0,\infty)$ and $f\in H_X(\rn)$,
$$\|P_t\ast f\|_{X}\leq C \|f\|_{H_X(\rn)}.$$
\item[{\rm (iii)}] If $X$ has an absolutely continuous
quasi-norm, then, for any $f\in H_X(\rn)$,
$u(x,t):=P_t\ast f(x)$ is harmonic on $\rp$.
\end{enumerate}
\end{corollary}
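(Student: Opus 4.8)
The plan is to prove the three parts of Corollary \ref{co-pt} by reducing each assertion to the already-established machinery, exploiting that $P_t\ast f$ is defined as a pointwise limit (Definition \ref{def-pi-o}) of $P_t\ast f_k$ for an approximating sequence $\{f_k\}_{k\in\nn}\subset L^2(\rn)\cap H_X(\rn)$ furnished by Lemma \ref{den-h}. For part (i), I would pick approximating sequences $\{f_k\}_{k\in\nn}$ and $\{g_k\}_{k\in\nn}\subset L^2(\rn)$ converging in $\cs'(\rn)$ to $f$ and $g$ respectively; then $\{f_k+g_k\}_{k\in\nn}\subset L^2(\rn)$ converges to $f+g$ in $\cs'(\rn)$. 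Since the Poisson integral is linear on $L^2(\rn)$, for each $k$ we have $P_t\ast(f_k+g_k)(x)=P_t\ast f_k(x)+P_t\ast g_k(x)$; letting $k\to\infty$ and invoking Lemma \ref{d-n}(ii) and (iii) (which guarantee both that each limit exists and that the value is independent of the chosen approximating sequence) yields additivity pointwise at every $(x,t)\in\rp$.

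For part (ii), the idea is to transfer a norm bound from the $L^2$-level to the limit via the Fatou property of $X$. Fix $t\in(0,\infty)$ and the approximating sequence $\{f_k\}_{k\in\nn}$ from Lemma \ref{den-h}, so that $\|f_k\|_{H_X(\rn)}\leq C\|f\|_{H_X(\rn)}$. For each fixed $(x,t)$, Definition \ref{def-pi-o} gives $P_t\ast f(x)=\lim_{k\to\infty}P_t\ast f_k(x)$, hence $|P_t\ast f(x)|=\lim_{k\to\infty}|P_t\ast f_k(x)|=\varliminf_{k\to\infty}|P_t\ast f_k(x)|$ pointwise. Applying Lemma \ref{fatou} (the Fatou property) together with the elementary pointwise bound $|P_t\ast f_k(x)|\leq M(f_k;P)(x)$ and then Theorem \ref{thm-p}, I would estimate
\begin{align*}
\|P_t\ast f\|_X&\leq\varliminf_{k\to\infty}\|P_t\ast f_k\|_X
\leq\varliminf_{k\to\infty}\|M(f_k;P)\|_X\\
&\lesssim\varliminf_{k\to\infty}\|f_k\|_{H_X(\rn)}
\lesssim\|f\|_{H_X(\rn)},
\end{align*}
where the constant is independent of both $t$ and $f$. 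A small point to check is that each $f_k\in L^2(\rn)$ is a bounded distribution (Remark \ref{re-ptf}), so Theorem \ref{thm-p} legitimately applies to each $f_k$.

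For part (iii), under the extra hypothesis that $X$ has an absolutely continuous quasi-norm, I would invoke Proposition \ref{well-def}: the convergence in \eqref{def-pi} is uniform on every compact subset $K\subset\rp$. Each $f_k\in L^2(\rn)$ is a bounded distribution, so by Remark \ref{re-ptf} each $u_k(x,t):=P_t\ast f_k(x)$ is harmonic on $\rp$. Since a locally uniform limit of harmonic functions is itself harmonic (a standard consequence of the mean-value property, or of the uniform convergence passing through Poisson/mean-value averaging), it follows that $u(x,t)=P_t\ast f(x)=\lim_{k\to\infty}u_k(x,t)$ is harmonic on $\rp$.

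The main obstacle I expect is marshalling the correct mode of convergence at each stage: part (i) needs only pointwise convergence and the well-definedness from Lemma \ref{d-n}, part (ii) trades pointwise convergence for a norm bound through the Fatou property of $X$ rather than any explicit norm formula (which is unavailable for general $X$), and part (iii) genuinely requires the \emph{uniform}-on-compacta convergence of Proposition \ref{well-def}, which is exactly why the absolute continuity hypothesis is imposed only there. The interplay between these three convergence notions, together with the preliminary verification that the truncating $L^2$-functions are bounded distributions so that Theorem \ref{thm-p} and Remark \ref{re-ptf} apply, is the delicate bookkeeping; the analytic content is otherwise routine given the lemmas already in place.
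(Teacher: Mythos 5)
Your proposal is correct and follows essentially the same route as the paper's own proof: part (i) via linearity on the approximating $L^2$ (or $C_{\mathrm c}^\infty$) sequences together with Lemma \ref{d-n}, part (ii) via the Fatou property of $X$, the pointwise bound $|P_t\ast f_k|\leq M(f_k;P)$, and Theorem \ref{thm-p}, and part (iii) via Proposition \ref{well-def} and the stability of harmonicity under locally uniform limits. No gaps.
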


\begin{proof}
We first prove (i). By Lemma \ref{d-n}(i), we find  two
sequences $\{f_k\}_{k\in\nn},\{g_k\}_{k\in\nn}\subset C_{\mathrm c}^\infty(\rn)$
such that
\begin{equation}\label{fkgk}
\lim_{k\to\infty} f_k=f\ \text{and}\ \lim_{k\to\infty} g_k=g
\end{equation}
in $\cs'(\rn)$, which further imply that
$$
\lim_{k\to\infty} (f_k+g_k)=f+g
$$
in $\cs'(\rn)$. From this, Definition \ref{def-pi-o}, and \eqref{fkgk},
we infer that,  for any $(x,t)\in\rp$,
\begin{align*}
P_t\ast (f+g)(x)&=\lim_{k\to\infty} P_t\ast(f_k+g_k)(x)\\
& =\lim_{k\to\infty} P_t\ast f_k(x)+\lim_{k\to\infty} P_t\ast g_k(x) =P_t\ast f(x)+P_t\ast g(x).
\end{align*}
This finishes the proof of (i).

Next, we show (ii). To this end, fix an $f\in H_X(\rn)$.
By Lemma \ref{den-h}, we  choose a sequence
$\{f_k\}_{k\in\nn}\subset L^2(\rn)$ such that
$\lim_{k\to\infty} f_k=f$ in $\cs'(\rn)$ and
$$
\|f_k\|_{H_X(\rn)}\lesssim\|f\|_{H_X(\rn)}.
$$
From this, Definition \ref{def-pi-o}, Definition \ref{bfs}(ii),
Lemma \ref{fatou}, and Theorem \ref{thm-p}, we deduce that, for any $t\in(0,\infty)$,
\begin{align*}
\|P_t\ast f\|_{X}&=\left\|\lim_{k\to\infty} P_t\ast f_k\right\|_{X}
=\left\|\,\left|\lim_{k\to\infty} P_t\ast f_k\,\right|\,\right\|_{X}\\
&=\left\|\lim_{k\to\infty} \left|P_t\ast f_k\right|\right\|_{X}
\leq \varliminf_{k\to\infty}\left\|\,\left|P_t\ast f_k\right|\,\right\|_{X}\\
&\leq \varliminf_{k\to\infty}\left\|M(f_k;P)\right\|_{X}\lesssim \varliminf_{k\to\infty}\|f_k\|_{H_X(\rn)}\lesssim \|f\|_{H_X(\rn)},
\end{align*}
which completes the proof of (ii).

Finally, we show (iii). To this end, fix an $f\in H_X(\rn)$.
By Lemma \ref{den-h}, we  choose a sequence
$\{f_k\}_{k\in\nn}\subset L^2(\rn)$ such that
$\lim_{k\to\infty} f_k=f$ in both $\cs'(\rn)$ and $H_X(\rn)$.
Using Definition \ref{def-pi-o}, we find that, for any $(x,t)\in\rp$,
\begin{equation*}
u(x,t)=\lim_{k\to\infty} P_t\ast f_k(x).
\end{equation*}
For any $k\in\nn$, from both $f_k\in L^2(\rn)$ and Remark \ref{re-ptf},
we infer that $u_k(x,t):=P_t\ast f_k(x)$ is harmonic on $\rp$.
Using this, Proposition \ref{well-def}, and \cite[p.\,42,
Corollary 1.8]{sw71}, we conclude that $u$ is harmonic on $\rp$.
This finishes the proof of (iii) and hence Corollary \ref{co-pt}.
\end{proof}

\section{First Order Riesz Transform Characterization}\label{s3}

In this section, we establish the first order Riesz transform
characterization of $H_X(\rn)$. In order to achieve
this goal, we need to introduce both the Hardy space $H_X(\rp)$ of
harmonic functions and the Hardy space
$\bh_X(\rp)$ of harmonic vectors on the
upper half space $\rp$, and clarify their relations with $H_X(\rn)$.
Let us begin with the concept of the Hardy type space $H^p(\rp)$ of
harmonic functions.
Recall that a function $u$ on $\rp$ is said to be \emph{harmonic} if,
for any $(x,t)\in\rp$,
$$
\sum_{j=1}^n\frac{\partial^2}{\partial x_j^2}u(x,t)+\frac{\partial^2}{\partial t^2}u(x,t)=0.
$$

\begin{definition}\label{def-h-p}
Let $p\in(0,\infty)$. The \emph{Hardy space $H^p(\rp)$ of
harmonic functions} is defined to be the set of all the harmonic
functions $u$ on $\rp$ such that
$$
\|u\|_{H^p(\rp)}:= \sup_{t\in(0,\infty)}\|u(\cdot,t)\|_{L^p(\rn)}<\infty.
$$
\end{definition}

Now, we introduce the concept of Hardy  spaces $H_X(\rp)$ of
harmonic functions associated with a BQBF space $X$.

\begin{definition}\label{def-h-x}
Let $X$ be a BQBF space.
\begin{enumerate}
\item[{\rm(i)}] Let $u$ be a function on
$\rp$. Its \emph{nontangential maximal function} $u^\ast$ is
defined by setting,  for any $x\in\rn$,
$$
u^\ast(x):=\sup_{\{(y,t)\in\rp:\ |y-x|<t\}}|u(y,t)|.
$$
\item[{\rm(ii)}] The \emph{Hardy space $H_X(\rp)$ of
harmonic functions associated with $X$} is defined to
be the set of all the harmonic functions $u$ on $\rp$ such that
$u^\ast\in X$.
Moreover, for any $u\in H_X(\rp)$, its (quasi-)norm
$\|u\|_{H_X(\rp)}$ is defined by setting
$$
\|u\|_{H_X(\rp)}:= \|u^\ast\|_{X}.
$$
\item[{\rm(iii)}] The subspace $H_{X,2}(\rp)$ is defined
to be the set of all the functions $u\in H_X(\rp)$ satisfying
that there exists a sequence $\{u_k\}_{k\in\nn}
\subset H_X(\rp)\cap H^2(\rp)$ such that
$$
u=\lim_{k\to\infty} u_k
$$
in $H_X(\rp)$.  Moreover, for any $u\in H_{X,2}(\rp)$,
let $\|u\|_{H_{X,2}(\rp)}:=\|u\|_{H_{X}(\rp)}$.
\end{enumerate}
\end{definition}

The following proposition establishes the relation
between $H_X(\rn)$ and $H_{X,2}(\rp)$ via the Poisson integral.

\begin{proposition}\label{lem-hx-hx2}
Assume that $X$ is a BQBF space, satisfies both Assumptions \ref{a} and
\ref{a2}, and has an absolutely continuous quasi-norm.
Let $u$ be a harmonic function on $\rp$. Then $u\in H_{X,2}(\rp)$
if and only if there exists an $f\in H_X(\rn)$ such that,
for any $(x,t)\in\rp$, $u(x,t)=P_t\ast f(x)$,
where $P_t\ast f$ is the same as in \eqref{def-pi};
moreover, there exist positive constants $C_1$ and $C_2$,
independent of both $f$ and $u$, such that
$$
C_1\|f\|_{H_X(\rn)}\leq \|u\|_{H_X(\rp)}\leq C_2\|f\|_{H_X(\rn)}.
$$
\end{proposition}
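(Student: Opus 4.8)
The plan is to prove the two implications separately and then read off the quasi-norm equivalence, with Theorem~\ref{thm-p} serving as the bridge that converts the $H_X(\rn)$-quasi-norm of a boundary value into the $X$-quasi-norm of the nontangential Poisson maximal function $M(\cdot;P)$, and with the Fatou property (Lemma~\ref{fatou}) as the workhorse for passing pointwise estimates to quasi-norm estimates. Throughout I would keep in mind that for $g\in L^2(\rn)$ the ordinary Poisson integral $u_g(x,t):=P_t\ast g(x)$ is harmonic (Remark~\ref{re-ptf}), lies in $H^2(\rp)$ since the Poisson integral contracts $L^2(\rn)$, and satisfies $u_g^\ast=M(g;P)$, so Theorem~\ref{thm-p} gives $\|u_g\|_{H_X(\rp)}=\|M(g;P)\|_X\sim\|g\|_{H_X(\rn)}$ whenever $g\in L^2(\rn)\cap H_X(\rn)$.

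For the direction $f\in H_X(\rn)\Rightarrow u\in H_{X,2}(\rp)$, where $u(x,t)=P_t\ast f(x)$ as in \eqref{def-pi}: using Lemma~\ref{den-h} and the assumed absolute continuity, I would choose $\{f_k\}_{k\in\nn}\subset L^2(\rn)\cap H_X(\rn)$ with $f_k\to f$ in both $H_X(\rn)$ and $\cs'(\rn)$ and $\|f_k\|_{H_X(\rn)}\ls\|f\|_{H_X(\rn)}$, and set $u_k(x,t):=P_t\ast f_k(x)\in H_X(\rp)\cap H^2(\rp)$. By Lemma~\ref{d-n} and Definition~\ref{def-pi-o}, $u(y,t)=\lim_{k\to\fz}u_k(y,t)$ pointwise, whence $u^\ast(x)\le\varliminf_{k\to\fz}M(f_k;P)(x)$; Lemma~\ref{fatou} then yields $\|u^\ast\|_X\ls\varliminf_{k\to\fz}\|f_k\|_{H_X(\rn)}\ls\|f\|_{H_X(\rn)}$, i.e.\ $u\in H_X(\rp)$ with $\|u\|_{H_X(\rp)}\le C_2\|f\|_{H_X(\rn)}$. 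Since this estimate holds for every element of $H_X(\rn)$, applying it to $f-f_k$ and using the linearity of Corollary~\ref{co-pt}(i) (so that $u-u_k$ is the Poisson extension of $f-f_k$) gives $\|(u-u_k)^\ast\|_X\ls\|f-f_k\|_{H_X(\rn)}\to0$; hence $u=\lim_ku_k$ in $H_X(\rp)$ and $u\in H_{X,2}(\rp)$.

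For the direction $u\in H_{X,2}(\rp)\Rightarrow u(x,t)=P_t\ast f(x)$ with $f\in H_X(\rn)$: I would take $\{u_k\}_{k\in\nn}\subset H_X(\rp)\cap H^2(\rp)$ with $u=\lim_ku_k$ in $H_X(\rp)$. The classical $H^2$ theory (see, e.g., \cite{sw71,s93}) represents each $u_k$ as $u_k(x,t)=P_t\ast f_k(x)$ for a unique $f_k\in L^2(\rn)$; since $f_k$ is a bounded distribution, $u_k^\ast=M(f_k;P)$ and Theorem~\ref{thm-p} gives $f_k\in H_X(\rn)$ with $\|f_k\|_{H_X(\rn)}\sim\|u_k^\ast\|_X$. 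Because $u_k-u_j$ is the Poisson extension of $f_k-f_j\in L^2(\rn)$, the same identity yields $\|f_k-f_j\|_{H_X(\rn)}\sim\|(u_k-u_j)^\ast\|_X\to0$, so $\{f_k\}$ is Cauchy in $H_X(\rn)$; by completeness of $H_X(\rn)$ (equivalently, by the Fatou-type limit argument built from Lemma~\ref{fatou} together with the embedding Lemma~\ref{emb} of $X$ into $L^s_w(\rn)$) there is an $f\in H_X(\rn)$ with $f_k\to f$ in $H_X(\rn)$, hence in $\cs'(\rn)$. Then, for each $(x,t)\in\rp$, Definition~\ref{def-pi-o} and Lemma~\ref{d-n} give $P_t\ast f(x)=\lim_kP_t\ast f_k(x)=\lim_ku_k(x,t)$, while $\|(u-u_k)^\ast\|_X\to0$ and the embedding $X\hookrightarrow L^s_w(\rn)$ force $u_k(x,t)\to u(x,t)$ (fixing $(x,t)$ and choosing, in its cone, a point where the a.e.\ convergence of $(u-u_k)^\ast$ holds, so that $|u(x,t)-u_k(x,t)|\le(u-u_k)^\ast$ of that point); hence $u(x,t)=P_t\ast f(x)$.

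The quasi-norm equivalence then follows by combining the two directions: once $u(x,t)=P_t\ast f(x)$ is established, the upper bound $\|u\|_{H_X(\rp)}=\|u^\ast\|_X\le C_2\|f\|_{H_X(\rn)}$ is exactly the sufficiency estimate, and the lower bound $\|f\|_{H_X(\rn)}\ls\|u^\ast\|_X$ follows from $\|f_k\|_{H_X(\rn)}\sim\|u_k^\ast\|_X$ together with the subadditivity of the nontangential maximal function and the quasi-triangle inequality in $X$, since $\|(u-u_k)^\ast\|_X\to0$. I expect the main obstacle to be the necessity direction, specifically points (ii) and (iii) above: producing the limiting distribution $f$ and verifying $f\in H_X(\rn)$ — that is, the completeness of $H_X(\rn)$, which is precisely where Lemma~\ref{emb} and Lemma~\ref{fatou} do the real work — and then reconciling the two a priori different notions of limit, namely the pointwise convergence coming from $H_X(\rp)$-convergence and the $\cs'(\rn)$-limit used to define $P_t\ast f$ in \eqref{def-pi}.
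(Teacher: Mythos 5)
Your proof is correct, and your sufficiency direction (from $f\in H_X(\rn)$ to $u\in H_{X,2}(\rp)$), as well as the way you read off the two-sided quasi-norm estimate, is essentially the paper's argument: Poisson-extend an approximating sequence from $L^2(\rn)\cap H_X(\rn)$, pass to the limit with Lemma~\ref{fatou}, and control $\|(u-u_k)^\ast\|_X$ by $\|f-f_k\|_{H_X(\rn)}$. The necessity direction, however, takes a genuinely different route. The paper does \emph{not} invoke the $L^2$-boundary-value representation of $H^2(\rp)$: it slices each $u_k$ at height $\epsilon$, sets $f_{k,\epsilon}:=u_k(\cdot,\epsilon)$, uses only the semigroup identity $P_t\ast f_{k,\epsilon}=u_k(\cdot,t+\epsilon)$ to prove the uniform bound $\sup_{\epsilon}\|f_{k,\epsilon}\|_{H_X(\rn)}\ls\|u\|_{H_X(\rp)}$, and then produces the boundary distribution $f$ by weak-$\ast$ compactness in $\cs'(\rn)$ together with a diagonal argument, identifying $u(x,t)=P_t\ast f(x)$ via Lemma~\ref{p-n}. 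You instead take the exact $L^2$ boundary values $f_k$ of $u_k$, note $u_k^\ast=M(f_k;P)$ so that Theorem~\ref{thm-p} converts $H_X(\rp)$-Cauchyness of $\{u_k\}$ into $H_X(\rn)$-Cauchyness of $\{f_k\}$, and appeal to completeness of $H_X(\rn)$. Your version is shorter and avoids the double limit, but it shifts the burden onto two inputs the paper does not use: the full representation theorem for $H^2(\rp)$ (available in \cite{sw71}, so harmless) and the completeness of $H_X(\rn)$, which the paper never states; your parenthetical sketch (the embedding $H_X(\rn)\hookrightarrow\cs'(\rn)$, sequential completeness of $\cs'(\rn)$, then Lemma~\ref{fatou} applied to the Peetre-type maximal function) does close that gap, and your cone-averaging step identifying $\lim_{k\to\infty}u_k(x,t)$ with $u(x,t)$ is precisely the paper's Lemma~\ref{p-n}. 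In short, the paper's route needs only uniform boundedness of the slices rather than Cauchyness and so sidesteps completeness; yours buys brevity at the cost of justifying it.
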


To prove Proposition \ref{lem-hx-hx2}, we need
the following lemma on the convergence.

\begin{lemma}\label{p-n}
Let $X$ be a BQBF space. Assume that
there exists an $s\in(0, \infty)$ such that $X^{1/s}$ is
a ball Banach function space and $M$ is bounded on $(X^{1/s})'$.
If both $\{f_k\}_{k\in\nn}\subset H_X(\rp)$ and $f\in H_X(\rp)$
satisfy $\lim_{k\to\infty} f_k=f$ in $H_X(\rp)$, and if
$\{\epsilon_k\}_{k\in\nn}\subset(0,\infty)$ satisfies $\lim_{k\to\infty}\epsilon_k=0$,
then, for any $(x,t)\in\rp$,
\begin{equation}\label{eq-p-n}
\lim_{k\to\infty} f_k(x,t+\epsilon_k)=f(x,t).
\end{equation}
\end{lemma}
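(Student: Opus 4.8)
The goal is to show pointwise convergence $f_k(x,t+\epsilon_k)\to f(x,t)$ from the fact that $f_k\to f$ in the quasi-norm $\|\cdot\|_{H_X(\rp)}=\|(\cdot)^\ast\|_X$. The plan is to exploit the embedding $X\hookrightarrow L^s_w(\rn)$ provided by Lemma \ref{emb}, which is available precisely because the hypotheses guarantee that $M$ is bounded on $(X^{1/s})'$. The strategy mirrors the argument already used in the proof of Proposition \ref{well-def}: convert the $H_X(\rp)$-convergence into control on a set of positive $\mu$-measure, where $\mu$ is the doubling measure from \eqref{mu}, and then use the nontangential domination by $(f_k-f)^\ast$ to pin down the pointwise values.

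\textbf{Key steps.} First I would fix $(x,t)\in\rp$ and, for each large $k$, introduce a small ball $B(x,\rho)$ with $\rho$ chosen so that, for all $y\in B(x,\rho)$ and all $k$ with $\epsilon_k$ small, the point $(x,t+\epsilon_k)$ lies in the nontangential cone with vertex $y$, i.e.\ $|x-y|<t+\epsilon_k$; taking $\rho<t/2$ and $k$ large enough that $\epsilon_k<t/2$ suffices. This yields the crucial pointwise domination
\begin{equation*}
|f_k(x,t+\epsilon_k)-f(x,t+\epsilon_k)|\le (f_k-f)^\ast(y)
\end{equation*}
for every such $y$, exactly as in \eqref{eq-p-1}. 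Second, I would define the ``good set''
\begin{equation*}
E_k:=\lf\{y\in B(x,\rho):\ (f_k-f)^\ast(y)\le 1\r\},
\end{equation*}
and use Lemma \ref{emb}, together with $\|(f_k-f)^\ast\|_X=\|f_k-f\|_{H_X(\rp)}\to0$, to show that $\mu(B(x,\rho)\setminus E_k)\to0$; hence $\mu(E_k)\ge\frac12\mu(B(x,\rho))$ for all large $k$. Third, averaging the $s$-th power of the constant quantity $|f_k(x,t+\epsilon_k)-f(x,t+\epsilon_k)|$ over $E_k$ against $w$ and invoking the domination gives
\begin{equation*}
|f_k(x,t+\epsilon_k)-f(x,t+\epsilon_k)|
\lesssim \|(f_k-f)^\ast\|_{L^s_w(\rn)}
\lesssim \|f_k-f\|_{H_X(\rp)}\to0,
\end{equation*}
just as in \eqref{2.10x}. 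Finally, I would split
\begin{equation*}
|f_k(x,t+\epsilon_k)-f(x,t)|\le |f_k(x,t+\epsilon_k)-f(x,t+\epsilon_k)|+|f(x,t+\epsilon_k)-f(x,t)|,
\end{equation*}
where the first term tends to $0$ by the above, and the second tends to $0$ because $f$ is harmonic, hence continuous on $\rp$, and $t+\epsilon_k\to t$.

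\textbf{Anticipated obstacle.} The main subtlety is the uniformity in $k$ needed to combine the measure estimate with the averaging trick: one must verify that the same ball $B(x,\rho)$ works simultaneously for all large $k$, which requires $\epsilon_k$ to be eventually smaller than $t/2$, and this is exactly where the hypothesis $\lim_{k\to\infty}\epsilon_k=0$ enters. A secondary technical point is checking that $(f_k-f)^\ast$ genuinely dominates $|f_k(x,t+\epsilon_k)-f(x,t+\epsilon_k)|$ for \emph{every} $y\in B(x,\rho)$ rather than just for a single $y$, so that the domination survives the averaging over $E_k$; this is handled by the cone-membership computation in the first step. Everything else reduces to the embedding and Fatou-type estimates already established, so the argument is essentially a localized repetition of the reasoning in Proposition \ref{well-def}.
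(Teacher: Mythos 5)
Your proposal is correct and follows essentially the same route as the paper: dominate $|f_k(x,t+\epsilon_k)-f(x,t+\epsilon_k)|$ by $(f_k-f)^\ast(y)$ for $y$ in a ball centered at $x$, average the $s$-th power against the weight $w$ using the embedding of Lemma \ref{emb} to get a bound by $[\mu(B)]^{-1/s}\|f_k-f\|_{H_X(\rp)}\to0$, and finish with the continuity of the harmonic function $f$. The only (harmless) differences are that the paper averages directly over the full ball $B(x,t)$, so the good-set $E_k$ detour is unnecessary, and that the hypothesis $\epsilon_k\to0$ is in fact needed only for the final step $f(x,t+\epsilon_k)\to f(x,t)$, not for the cone membership, which holds for every $\epsilon_k>0$ because $|x-y|<t<t+\epsilon_k$.
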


\begin{proof}
Assume that both $\{f_k\}_{k\in\nn}\subset H_X(\rp)$ and $f\in H_X(\rp)$
satisfy
\begin{equation}\label{eq-pr-pn}
\lim_{k\to\infty} f_k=f
\end{equation}
in $H_X(\rp)$.
Since $f\in H_X(\rp)$, it follows that $f$ is harmonic.
Thus, to prove \eqref{eq-p-n}, it suffices to show that, for any $(x,t)\in\rp$,
\begin{equation}\label{3.1x}
\lim_{k\to\infty} f_k(x,t+\epsilon_k)
=\lim_{k\to\infty} f(x,t+\epsilon_k).
\end{equation}
Observe that,
for any $k\in\nn$, $(x,t)\in\rp$, and $y\in B(x,t)$,
\begin{align*}
|f_k(x,t+\epsilon_k)-f(x,t+\epsilon_k)|
&\leq \sup_{\{(z,s)\in\rp:\ |z-y|<s\}}|f_k(z,s)-f(z,s)|\\
&=(f_k-f)^\ast(y).
\end{align*}
From this, Lemma \ref{emb}, and \eqref{eq-pr-pn},
we deduce that, for any $(x,t)\in\rp$,
\begin{align*}
&|f_k(x,t+\epsilon_k)-f(x,t+\epsilon_k)|\\
&\quad=[\mu(B(x,t))]^{-1/s}\left[\int_{B(x,t)}
|f_k(x,t+\epsilon_k)-f(x,t+\epsilon_k)|^sw(y)\,dy\right]^{1/s}\\
&\quad\leq [\mu(B(x,t))]^{-1/s}\left\{\int_{B(x,t)}
[(f_k-f)^\ast(y)]^sw(y)\,dy\right\}^{1/s}\\
&\quad\leq [\mu(B(x,t))]^{-1/s}\|(f_k-f)^\ast\|_{L_w^s(\rn)}\lesssim[\mu(B(x,t))]^{-1/s}\|(f_k-f)^\ast\|_{X}\\
&\quad\sim[\mu(B(x,t))]^{-1/s}\|f_k-f\|_{H_X(\rp)}\to 0
\end{align*}
as $k\to\infty$, where $w$ and $s$ are the same as in Lemma \ref{emb}.
This implies that \eqref{3.1x} holds
true and hence finishes the proof of Lemma \ref{p-n}.
\end{proof}

Next, we show Proposition \ref{lem-hx-hx2}.

\begin{proof}[Proof of Proposition \ref{lem-hx-hx2}]
We first prove the necessity.
To this end, fix a $u\in H_{X,2}(\rp)$. By Definition \ref{def-h-x}(iii),
we find a sequence $\{u_k\}_{k\in\nn}\subset H_X(\rp)\cap H^2(\rp)$ such
that
\begin{equation}\label{eq-uk-u}\lim_{k\to\infty} u_k=u
\end{equation}
in $H_X(\rp)$.
For any $k\in\nn$ and $\epsilon\in(0,\infty)$,
let $f_{k,\epsilon}(\cdot):=u_k(\cdot,\epsilon)$.
Since $u_k\in H^2(\rp)$, from \cite[p.\,51, Lemmas 2.6 and 2.7]{sw71},
it follows that, for any $(x,t)\in\rp$,
\begin{equation}\label{eq-u-fp}
P_t\ast f_{k,\epsilon}(x)=u_k(x,t+\epsilon).
\end{equation}
Using this, \eqref{eq-max-p}, and Theorem \ref{thm-p}, we conclude that
\begin{align}\label{eq-b}
&\sup_{\epsilon\in(0,\infty)}\|f_{k,\epsilon}\|_{H_X(\rn)}\\
&\quad\sim \sup_{\epsilon\in(0,\infty)}\|M(f_{k,\epsilon};P)\|_{X}
\sim \sup_{\epsilon\in(0,\infty)}\left\|\sup_{\{(y,t)\in\rp:\ |y-\cdot|<t\}}|P_t\ast f_{k,\epsilon}(y)|\right\|_{X}\notag\\
&\quad\sim\sup_{\epsilon\in(0,\infty)}\left\|\sup_{\{(y,t)\in\rp:\ |y-\cdot|<t\}}
|u_k(y,t+\epsilon)|\right\|_{X}\lesssim \sup_{\epsilon\in(0,\infty)}
\left\|u_k^\ast\right\|_{X}\notag\\
&\quad\sim \|u_k\|_{H_X(\rp)}\lesssim \|u\|_{H_X(\rp)},\notag
\end{align}
where $M(f_{k,\epsilon};P)$ is the same as in \eqref{eq-max-p} via $f$
replaced by $f_{k,\epsilon}$.
Thus, for any $k\in\nn$,
$\{f_{k,\epsilon}\}_{\epsilon\in(0,\infty)}$
is a bounded set in $H_X(\rn)$.
Moreover, by \cite[Lemma 4.8.18]{LYH2022}, we
find that $H_X(\rn)$ continuously embeds into
$\cs'(\rn)$, which further implies
that $\{f_{k,\epsilon}\}_{\epsilon\in(0,\infty)}$
is a bounded set in $\cs'(\rn)$.
By the weak compactness of $\cs'(\rn)$ (see, for
instance, \cite[p.\,119]{s93}), we find an
$f_k\in \cs'(\rn)$ and a sequence $\{\epsilon_j\}_{j\in\nn}$,
satisfying $\lim_{j\to\infty} \epsilon_j=0$, such that
$\lim_{j\to\infty}f_{k,\epsilon_j}=f_k$
in $\cs'(\rn)$, which implies that,
for any $\Phi\in\cs(\rn)$ satisfying $\int_{\rn}\Phi(y)\,dy\neq 0$ and
for any $(x,t)\in\rp$,
$$
\lim_{j\to\infty}[f_{k,\epsilon_j}\ast \Phi_t (x)]
=f_k\ast\Phi_t(x)
$$
and hence, for any $x\in\rn$,
\begin{align*}
M(f_k;\Phi)(x)
&=\sup_{t\in(0,\infty)}\lim_{j\to\infty}[f_{k,\epsilon_j}\ast \Phi_t (x)]\leq\sup_{t\in(0,\infty)}\varliminf_{j\to\infty}\sup_{t\in(0,\infty)}[f_{k,\epsilon_j}\ast \Phi_t (x)]\\
&\leq \varliminf_{j\to\infty}M(f_{k,\epsilon_j};\Phi)(x),
\end{align*}
where $M(f_{k,\epsilon_j};\Phi)$ is the same as in \eqref{2.6x}
via replacing $f$ by $f_{k,\epsilon_j}$.
From this, \cite[Theorem 3.1]{shyy}, Lemma \ref{fatou},
and \eqref{eq-b},
we deduce that, for any $k\in\nn$,
\begin{align}\label{eq-fu}
\|f_k\|_{H_X(\rn)}&\sim\|M(f_k;\Phi)\|_X \lesssim \varliminf_{j\to\infty}
\|M(f_{k,\epsilon_j};\Phi)\|_X\\
&\sim \varliminf_{j\to\infty}
\|f_{k,\epsilon_j}\|_{H_X(\rn)}\lesssim \|u\|_{H_X(\rp)},\notag
\end{align}
which further implies that $\{f_k\}_{k\in\nn}$
is also a bounded sequence in $H_X(\rn)$ and hence in $\cs'(\rn)$.
By the weak compactness of $\cs'(\rn)$ again and the diagonal principle,
we find an $f\in \cs'(\rn)$ and a subsequence
$\{f_{k_j,\epsilon_j}\}_{j\in\nn}$ of $\{f_{k,\epsilon_j}\}_{j\in\nn}$ such that
\begin{equation}\label{eq-fk-f}
\lim_{j\to\infty} k_j=\infty,\
\lim_{j\to\infty}\epsilon_j=0,
\ \text{and}\
\lim_{j\to\infty}f_{k_j,\epsilon_j}=f
\end{equation}
in $\cs'(\rn)$.
Applying an argument similar to that used in the estimation of \eqref{eq-fu},
we conclude that $\|f\|_{H_X(\rn)}\lesssim\|u\|_{H_X(\rp)}$
and hence $f\in H_X(\rn)$.
Form \eqref{eq-uk-u}, Lemma \ref{p-n}, \eqref{eq-u-fp}, \eqref{eq-fk-f},
Lemma\ref{d-n}, and Proposition \ref{well-def},
we deduce that, for any $(x,t)\in\rp$,
\begin{align*}
u(x,t)=\lim_{j\to\infty} u_{k_j}(x,t+\epsilon_j)
=\lim_{j\to\infty} P_t\ast f_{k_j,\epsilon_j}(x)
=P_t\ast f(x).
\end{align*}
This finishes the proof of the necessity.

Now, we show the sufficiency. To this end, assume that
$f\in H_X(\rn)$ such that, for any $(x,t)\in\rp$,
$u(x,t)=P_t\ast f(x)$.
Then, by Corollary \ref{co-pt}(i),
we know that $u$ is harmonic on $\rp$.
Also, by both Lemma \ref{den-h} and \eqref{def-pi}, we find a
sequence $\{f_k\}_{k\in\nn}\subset H_X(\rn)\cap L^2(\rn)$
such that, for any $(x,t)\in\rp$,
\begin{equation}\label{eq-u-f}
u(x,t)=\lim_{k\to\infty}P_t\ast f_k(x).
\end{equation}
For any $k\in\nn$ and $(x,t)\in\rp$,
let $u_k(x,t):=P_t\ast f_k(x)$. From \cite[Theorem 2.1(a)]{sw71},
we deduce that $\{u_k\}_{k\in\nn}\subset H^2(\rp)$.
Moreover, by Theorem \ref{thm-p}, we find that, for any $k\in\nn$,
\begin{equation*}
\|u_k\|_{H_X(\rp)}=\left\|\sup_{\{(y,t)\in\rp:\ |y-\cdot|<t\}}
|P_t\ast f_k(y)|\right\|_X=\left\|
M(f_k;P)\right\|_X\lesssim\|f_k\|_{H_X(\rn)},\end{equation*}
which further implies that
$\{u_k\}_{k\in\nn}\subset H_X(\rp)\cap H^2(\rp)$.
Finally, we show that $u\in H_X(\rp)$ and
$\lim_{k\to\infty}u_k=u$ in $H_X(\rp)$.
Indeed, observe that $\{f_k\}_{k\in\nn}$ is a Cauchy sequence
in $H_X(\rn)$. Thus, for any $\epsilon\in(0,\infty)$,
we find a $K\in\nn$ such that, for any
$k>K$ and $j>K$,
$$
\|f_k-f_j\|_{H_X(\rn)}<\epsilon.
$$
From this, Definition \ref{def-h-x}(ii),
\eqref{eq-u-f}, Lemma \ref{fatou},
and Theorem \ref{thm-p}, we deduce that,
for any $k>K$,
\begin{align*}
\|u_k-u\|_{H_X(\rp)}
&=\left\|\sup_{\{(y,t)\in\rp:\ |y-\cdot|<t\}}|u_k(y,t)-u(y,t)|\right\|_X\\
&=\left\|\sup_{\{(y,t)\in\rp:\ |y-\cdot|<t\}}
\lim_{j\to\infty}|P_t\ast(f_k-f_j)(y)|\right\|_X\\
&\leq \left\|\varliminf_{j\to\infty}M(f_k-f_j;P)\right\|_X\leq \varliminf_{j\to\infty}\left\|M(f_k-f_j;P)\right\|_X\\
&\lesssim \varliminf_{j\to\infty}\left\|f_k-f_j\right\|_{H_X(\rn)}\lesssim \epsilon,
\end{align*}
which implies that $u\in H_X(\rp)$ and
$\lim_{k\to\infty}u_k=u$ in $H_X(\rp)$.
Thus, $u\in H_{X,2}(\rp)$.
This finishes the proof of the sufficiency and hence Proposition \ref{lem-hx-hx2}.
\end{proof}

Next, we introduce the Hardy space $\bh_X(\rp)$
consisting of vectors of harmonic functions.
To be precise, let $F:= (u_0, u_1,\dots,u_n)$ be a harmonic
vector on $\rp$, namely, for any $k\in\{0,1,\dots,n\}$,
$u_k$ is harmonic on $\rp$.
The vector
$F$ is said to satisfy the \emph{generalized Cauchy--Riemann equation} if
\begin{align}\label{eq-cr}
\left\{
\begin{array}{lc}
\displaystyle{\sum_{j=0}^n\frac{\partial u_j}{\partial x_j}=0}, &\ \\
\displaystyle{\frac{\partial u_j}{\partial x_k}=\frac{\partial u_k}{\partial x_j}},
&\forall\, j,k\in\{0,1,\dots,n\},\\
\end{array}
\right.
\end{align}
where, for any $(x,t)\in\rp$,
$x:=(x_1,\dots,x_n)$ and $x_0:=t$.

\begin{definition}\label{hx-v}
Let $X$ be a BQBF space.
\begin{enumerate}
\item[{\rm(i)}]The \emph{Hardy space $\bh_X(\rp)$
consisting of vectors of harmonic functions} is defined
to be the set of all the harmonic vectors $F:= (u_0, u_1,\dots,u_n)$
on $\rp$ satisfying \eqref{eq-cr} and
$$
\|F\|_{\bh_X(\rp)}:=\sup_{t\in(0,\infty)}\|\,|F(\cdot,t)|\,\|_X <\infty,
$$
where, for any $(x,t)\in\rp$,
$$
|F(x,t)|:=\left[\sum_{j=0}^n|u_j(x,t)|^2\right]^{1/2}.
$$
When $X:=L^p(\rn)$ with $p\in(0,\infty)$, we denote $\bh_{L^p(\rn)}(\rp)$
simply by $\bh^p(\rp)$.
\item[{\rm(ii)}] The subspace $\bh_{X,2}(\rp)$ is defined to
be the set of all the vectors $F\in \bh_X(\rp)$ satisfying
that there exists a sequence $\{F_k\}_{k\in\nn}\subset
\bh_X(\rp)\cap \bh^2(\rp)$ such that
$$
F=\lim_{k\to\infty} F_k
$$
in $\bh_X(\rp)$.  Moreover, for any $F\in \bh_{X,2}(\rp)$,
let $\|F\|_{\bh_{X,2}(\rp)}:=\|F\|_{\bh_{X}(\rp)}$.
\end{enumerate}
\end{definition}

For any $F\in \bh_X(\rp)$, we have the following conclusion.

\begin{lemma}\label{f-fp}
Let $X$ be a BQBF space. Assume that
there exists an $s\in(\frac{n-1}{n}, \infty)$ such that $X^{1/s}$ is
a ball Banach function space and $M$ is bounded on $(X^{1/s})'$ .
Then, for any $a\in [0,\infty)$,
$q\in[\frac{n-1}{n},s)$, $F\in \bh_X(\rp)$, and $(x,t)\in\rp$,
\begin{equation}\label{eq-f-fp}
|F(x,t+a)|^q\leq  P_t\ast (g_a)^q(x),
\end{equation}
where $g_a(\cdot):=\lim_{t\to0^+}|F(\cdot,t+a)|$.
Moreover, for any $a\in[0,\infty)$, $g_a\in X$ and
$\|g_a\|_X\leq \|F\|_{\bh_X(\rp)}$.
\end{lemma}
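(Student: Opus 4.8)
The plan is to base the proof on the classical subharmonicity of powers of a conjugate harmonic system and then transfer the resulting Poisson domination into the setting of $X$ through the embedding into a weighted Lebesgue space furnished by Lemma \ref{emb}. First I would recall from the theory of harmonic vectors that, since $F=(u_0,\dots,u_n)$ satisfies the generalized Cauchy--Riemann equations \eqref{eq-cr}, the function $|F|^q$ is subharmonic on $\rp$ for every $q\ge\frac{n-1}{n}$ (this is the classical fact in \cite{sw71}). Away from the zero set of $F$ this is a direct computation of $\Delta|F|^q$ using \eqref{eq-cr}; at the zeros one regularizes via $(|F|^2+\dz^2)^{q/2}$ and lets $\dz\to0^+$, or verifies the sub-mean-value inequality by hand. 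This is precisely the step that forces the constraint $q\ge\frac{n-1}{n}$, and it is the only place where the Cauchy--Riemann structure is used.

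Next I would fix $a\in(0,\fz)$ and set $v(x,t):=|F(x,t+a)|^q$. Since $t+a\ge a>0$, the function $v$ is subharmonic on $\rp$ and smooth up to the boundary $\rn\times[0,\fz)$, with trace $v(\cdot,0)=|F(\cdot,a)|^q$; in particular $g_a=|F(\cdot,a)|$, and the Poisson integral $P_t\ast(g_a)^q$ converges, because Lemma \ref{emb} gives $|F(\cdot,a)|\in L^s_w(\rn)$, whence $(g_a)^q\in L^{s/q}_w(\rn)$ with $s/q>1$, which already forces $\int_\rn(1+|y|)^{-n-1}|F(y,a)|^q\,dy<\fz$. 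The domination $v(x,t)\le P_t\ast(g_a)^q(x)$ then follows from a maximum-principle argument: the subharmonic function $v-P_t\ast(g_a)^q$ vanishes on $\{t=0\}$ and, by the sub-mean-value property combined with the uniform weighted bound $\sup_{t>0}\int_\rn|F(x,t)|^s\,d\mu(x)\ls\|F\|_{\bh_X(\rp)}^s$, has controlled growth in $\rp$, so a Phragm\'en--Lindel\"of argument yields nonpositivity. This establishes \eqref{eq-f-fp} for every $a>0$.

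For the norm estimate when $a>0$ there is nothing to do, since $g_a=|F(\cdot,a)|$ gives directly $\|g_a\|_X=\|\,|F(\cdot,a)|\,\|_X\le\sup_{t>0}\|\,|F(\cdot,t)|\,\|_X=\|F\|_{\bh_X(\rp)}$. The case $a=0$ I would treat by letting $a\to0^+$. The almost-everywhere existence of $g_0=\lim_{t\to0^+}|F(\cdot,t)|$ comes from a Fatou-type theorem: the harmonic-majorant domination just proved places $F$ in the relevant weighted Hardy class, so $F$ has nontangential limits $\mu$-almost everywhere, hence Lebesgue-almost everywhere since $w>0$ a.e. Granting this, the bound $\|g_0\|_X\le\|F\|_{\bh_X(\rp)}$ follows from Lemma \ref{fatou} applied to a sequence $t_k\downarrow0$ with $|F(\cdot,t_k)|\to g_0$ a.e.\ and $\|\,|F(\cdot,t_k)|\,\|_X\le\|F\|_{\bh_X(\rp)}$, and \eqref{eq-f-fp} with $a=0$ follows by passing to the limit $a\to0^+$ in the inequality for $a>0$, where the left-hand side tends to $|F(x,t)|^q$ by continuity and $|F(\cdot,a)|^q\to(g_0)^q$ against the Poisson kernel is justified by the uniform weighted bounds.

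The main obstacle is the Poisson domination step itself. Subharmonicity and the trivial norm bound are classical or immediate, but converting subharmonicity into the pointwise inequality $|F(x,t+a)|^q\le P_t\ast(g_a)^q(x)$ requires a genuine maximum principle on the unbounded domain $\rp$, for which one must control the growth of $v$ near infinity using only the weighted $L^s$ information coming from Lemma \ref{emb}. The second delicate point is the a.e.\ existence of the boundary limit $g_0$, which rests on a weighted Fatou theorem for the doubling measure $\mu$; both are where the adaptation of the classical $\bh^p$ theory to the space $X$ genuinely happens.
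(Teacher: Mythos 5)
Your overall strategy (subharmonicity of $|F|^q$ for $q\ge\frac{n-1}{n}$, Poisson domination, then Lemma \ref{fatou} for the norm bound) is the same as the paper's, and the subharmonicity step and the estimate $\|g_a\|_X\le\|F\|_{\bh_X(\rp)}$ are fine. The genuine gap is in the central step, the conversion of subharmonicity into the pointwise bound $|F(x,t+a)|^q\le P_t\ast(g_a)^q(x)$. You propose a Phragm\'en--Lindel\"of argument for the subharmonic function $v-P_t\ast(g_a)^q$, claiming ``controlled growth'' from the uniform weighted bound $\sup_{t>0}\int_{\rn}|F(x,t)|^s\,d\mu(x)\ls\|F\|_{\bh_X(\rp)}^s$. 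But the only growth estimate this yields (via the sub-mean-value property and $w(x)\sim(1+|x|)^{-n\varepsilon}$) is of the form $v(x_0,t_0)\ls a^{-nq/s}(1+|x_0|+t_0)^{n\varepsilon q/s}$, and the exponent $n\varepsilon q/s$ need not be less than $1$ when $n\ge2$. Since the harmonic function $(x,t)\mapsto t$ is positive on $\rp$ and vanishes on the boundary, Phragm\'en--Lindel\"of on the half-space requires growth $o(|(x,t)|)$, which you have not established; so the maximum principle as stated does not close.

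The paper avoids this by verifying that $t\mapsto K(|F_a|^{q\eta},t)=\int_{\rn}|F(x,t+a)|^{q\eta}(|x|+1+t)^{-n-1}\,dx$ is bounded for some $\eta\in(1,s/q)$ --- using the splitting into $\{|F|\le1\}$ and its complement, H\"older with $q_0=s/(q\eta)$, Lemma \ref{emb}, and the $B_p$-condition of Lemma \ref{bpc} --- and then invoking Nualtaranee's theorem on least harmonic majorants in half-spaces, which simultaneously gives the existence of the boundary limit $g_a$ and the majorization $|F_a|^q\le P_t\ast(g_a)^q$. The strict gain $\eta>1$ is not cosmetic: it provides the uniform integrability that rules out a singular part in the boundary measure of the least harmonic majorant. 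Your proposal never addresses this possibility; even if you produced a harmonic majorant, it could a priori be the Poisson--Stieltjes integral of $(g_a)^q\,dx$ plus a nonnegative singular measure, which would not yield \eqref{eq-f-fp}. The same omission affects your $a=0$ case, where you appeal to an unproved ``weighted Fatou theorem'' for the existence of $g_0$ and to ``uniform weighted bounds'' to pass $|F(\cdot,a)|^q\to(g_0)^q$ under the Poisson kernel; both of these are exactly what the $K(|F_a|^{q\eta},\cdot)$ bound and Nualtaranee's theorem are there to supply.
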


To show Lemma \ref{f-fp}, let us first recall the concept of subharmonic functions
(see, for instance, \cite[p.\,76, (4.2)]{sw71}) .

\begin{definition}\label{def-sub-h}
Let $u$ be a continuous function on $\rp$. Then $u$
is said to be \emph{subharmonic on $\rp$} if, for any $\xi\in\rp$
and $r\in(0,\infty)$ such that
$\overline{B(\xi,r)}\subset\rp$,
$$
u(\xi)\leq \frac{1}{\omega_{n}}\int_{S^{n}} u(\xi+ts)\,ds,
$$
where $\omega_{n}$ is the \emph{unit spherical measure}
of $\mathbb{R}^{n+1}$
and $S^{n}$ is the \emph{unit sphere} of $\mathbb{R}^{n+1}$.
\end{definition}

To show Lemma \ref{f-fp}, we also need the following so-called
$B_{p}(\rn)$-condition of Muckenhoupt weights
(see, for instance, \cite[(2.21)]{ccyy16}).

\begin{lemma}\label{bpc}
Let $p\in[1,\infty)$ and $w\in A_p(\rn)$. Then there
exists a positive constant $C$ such that, for
any $(x,t)\in\rp$,
$$
\int_{\rn}\frac{w(y)}{(t+|x-y|)^{np}}\,dy
\leq C t^{-np}\int_{B(x,t)}w(y)\,dy.
$$
\end{lemma}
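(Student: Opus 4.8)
The plan is to split the integral over $\rn$ according to the dyadic annuli centered at $x$ at the scale $t$, to bound the contribution of each annulus by the $w$-measure of a comparable ball, and then to sum using the doubling property of Muckenhoupt weights. First I would decompose
\[
\int_{\rn}\frac{w(y)}{(t+|x-y|)^{np}}\,dy
=\int_{B(x,t)}\frac{w(y)}{(t+|x-y|)^{np}}\,dy
+\sum_{k=0}^\infty\int_{A_k}\frac{w(y)}{(t+|x-y|)^{np}}\,dy,
\]
where $A_k:=\{y\in\rn:\ 2^kt\le|x-y|<2^{k+1}t\}$ for each $k\in\zz_+$. On $B(x,t)$ one has $t+|x-y|\ge t$, so the first term is at most $t^{-np}\int_{B(x,t)}w(y)\,dy=t^{-np}w(B(x,t))$, which already has the desired form. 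On $A_k$ one has $t+|x-y|\ge 2^kt$ and $A_k\subset B(x,2^{k+1}t)$, whence
\[
\int_{A_k}\frac{w(y)}{(t+|x-y|)^{np}}\,dy
\le (2^kt)^{-np}\,w\lf(B(x,2^{k+1}t)\r).
\]

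The key step is to bound $w(B(x,2^{k+1}t))$ in terms of $w(B(x,t))$ with a gain that beats the factor $2^{-knp}$. The standard comparison inequality for Muckenhoupt weights gives, for concentric balls $B\subset B'$ and $w\in A_q(\rn)$, the estimate $w(B')\lesssim (|B'|/|B|)^{q}\,w(B)$; applied with $q=p$ and $B=B(x,t)$, $B'=B(x,2^{k+1}t)$ this only yields a factor $2^{(k+1)np}$, which exactly cancels $2^{-knp}$ and leaves a non-summable series. To produce genuine decay I would invoke the self-improvement (openness) property of the Muckenhoupt classes: since $w\in A_p(\rn)$ with $p>1$, there exists $p_0\in(1,p)$ with $w\in A_{p_0}(\rn)$, and therefore $w(B(x,2^{k+1}t))\lesssim 2^{(k+1)np_0}w(B(x,t))$. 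Substituting this gives
\[
\sum_{k=0}^\infty(2^kt)^{-np}w\lf(B(x,2^{k+1}t)\r)
\lesssim t^{-np}w(B(x,t))\sum_{k=0}^\infty 2^{kn(p_0-p)},
\]
and the geometric series converges because $p_0<p$. Combining the ball and annular contributions yields the asserted inequality, with $C$ depending only on $n$, $p$, and the $A_{p_0}(\rn)$ constant of $w$.

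The main obstacle is exactly the gap between the elementary doubling bound, which is borderline (critically non-summable), and the summable bound required here; the resolution is the self-improved exponent $p_0<p$ furnished by the openness of $A_p(\rn)$. I should note that this mechanism genuinely needs $p>1$; in the borderline case $p=1$ the self-improvement is unavailable, but in the application the relevant weight $w:=[M(\mathbf{1}_{B(\mathbf{0},1)})]^{\ez}$ decays like $|y|^{-n\ez}$ at infinity, which makes the tail $\int_{\rn}(t+|x-y|)^{-n}w(y)\,dy$ directly convergent and of the stated order, so the conclusion persists for the weights actually used.
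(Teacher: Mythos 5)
Your argument is correct for $p\in(1,\infty)$, and it is the standard one; the paper itself offers no proof of this lemma, simply citing \cite[(2.21)]{ccyy16}, so there is no in-paper argument to compare against. The dyadic-annulus decomposition, the comparison $w(B(x,2^{k+1}t))\lesssim 2^{(k+1)np_0}w(B(x,t))$ obtained from the $A_{p_0}(\rn)$ inequality $(|E|/|B|)^{p_0}\le [w]_{A_{p_0}}\,w(E)/w(B)$ for $E\subset B$, and the use of the openness property $A_p(\rn)=\bigcup_{p_0\in(1,p)}A_{p_0}(\rn)$ to beat the critically non-summable exponent are exactly the ingredients of the usual proof. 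Two remarks. First, your observation that the mechanism genuinely needs $p>1$ is actually sharper than you state: as written, with $p\in[1,\infty)$, the lemma is false at the endpoint $p=1$ --- take $w\equiv 1\in A_1(\rn)$, for which the left-hand side $\int_{\rn}(t+|x-y|)^{-n}\,dy$ diverges while the right-hand side is finite --- so no proof can close that case, and the hypothesis should really read $p\in(1,\infty)$. Second, your closing attempt to rescue $p=1$ via the particular weight $[M(\mathbf{1}_{B(\mathbf{0},1)})]^{\varepsilon}$ is misdirected: in the only place the lemma is applied (the estimate of ${\rm IV}$ in the proof of Lemma \ref{f-fp}), it is invoked with $p:=q_0'\in(1,\infty)$ and with the dual weight $w^{-q_0'/q_0}$, which grows rather than decays at infinity, so the case $p>1$ that you proved is precisely what the paper needs and the endpoint never arises.
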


Now, we show Lemma \ref{f-fp}.

\begin{proof}[Proof of Lemma \ref{f-fp}]
For any $a\in[0,\infty)$ and
$t\in(0,\infty)$, let
$$
F_a(\cdot,t):=F(\cdot,t+a)
$$
and
$$
K(|F_a|^{q\eta},t):=\int_{\rn}\frac{|F(x,t+a)|^{q\eta}}{(|x|+1+t)^{n+1}}\,dx,
$$
where $\eta\in(1,s/q)$ (the existence of such an $\eta$
is guaranteed by $q\in[\frac{n-1}{n},s)$).
We claim that $K(|F_a|^{q\eta},\cdot)$ is bounded on $(0,\infty)$.
To show this, for any $t\in(0,\infty)$,
let
$$
E_t:=\{x\in\rn:\, |F(x,t+a)|\leq1\}.
$$
Then we write
\begin{align*}
K(|F_a|^{q\eta},t)
=\int_{E_t}\frac{|F(x,t+a)|^{q\eta}}{(|x|+1+t)^{n+1}}\,dx
+\int_{E_t^\complement}\cdots
&=: {\rm I} +{\rm II}.
\end{align*}
We first estimate ${\rm I}$. By the definition  of $E_t$,
we conclude that
\begin{align*}
{\rm I}&\leq \int_{\rn}\frac{1}{(|x|+1+t)^{n+1}}\,dx\lesssim 
\int_0^\infty\frac{1}{(r+1+t)^{n+1}}r^{n-1}\,dr\sim \frac{1}{1+t}.
\end{align*}
Next, we estimate ${\rm II}$. Letting $q_0:=s/(q\eta)$
and $w$ be the same as in Lemma \ref{emb},
then we know that $q_0\in(1,\infty)$.
By the H\"older inequality, we find that
\begin{align*}
{\rm II}&\leq \left\{\int_{\rn}|F(x,t+a)|^sw(x)\,dx\right\}^{1/q_0}
\left\{\int_{\rn}\frac{[w(x)]^{-q_0'/q_0}}{(|x|+
1+t)^{(n+1)q'_0}}\,dx\right\}^{1/q'_0}\\
&=: {\rm III}\times{\rm IV}.\notag
\end{align*}
From Lemma \ref{emb} and $F\in \bh_X(\rp)$, we deduce that
\begin{equation*}
{\rm III}\leq \|\,|F(\cdot,t+a)|\,\|_{L_w^s(\rn)}^{q\eta}\lesssim
\|\,|F(\cdot,t+a)|\,\|_X^{q\eta}\lesssim \|F\|_{\bh_X(\rp)}^{q\eta}.
\end{equation*}
By Remark \ref{def-w}, $q_0\in(1,\infty)$, and  \cite[Proposition 7.1.5(6)]{gl},
we conclude that $w\in A_{q_0}(\rn)$.
Moreover, from \cite[Proposition 7.1.5(4)]{gl}, we further
infer $w^{-q_0'/q_0}\in A_{q_0'}(\rn)$.
Using this and Lemma \ref{bpc} with $p:=q_0'$,
$w:=w^{-q_0'/q_0}$, and $(x,t):=(\mathbf{0},1)$, we find that
\begin{align*}
{\rm IV}&\leq\frac{1}{1+t}\left\{\int_{\rn}\frac{[w(x)]^{-q_0'/q_0}}{(|x|+
1)^{nq'_0}}\,dx\right\}^{1/q'_0}\\
&\lesssim\frac{1}{1+t}\left\{\int_{B(\mathbf{0},1)}
[w(x)]^{-q_0'/q_0}\,dx\right\}^{1/q'_0}
\sim \frac{1}{1+t}.
\end{align*}
Combining this and the estimates of ${\rm I}$, ${\rm II}$, and
${\rm III}$, we then finish the proof of the above claim.

On the other hand, by $F\in \bh_X(\rp)$ and \cite[p.\,234,
Theorem 4.14]{sw71}, we find that $|F_a|^q$ is subharmonic
for any $q\in[\frac{n-1}{n},s)$.
From \cite[Theorem 8]{n}, we deduce that
$|F_a|^q\leq  P_t\ast (g_a)^q$,
where, for any $x\in\rn$,
\begin{equation}\label{eq-g}
g_a(x):=\lim_{t\to 0^+}|F_a(x,t)|=\lim_{t\to 0^+}|F(x,t+a)|.
\end{equation}
This shows that \eqref{eq-f-fp} holds true.

Finally, by both Lemma \ref{fatou} and \eqref{eq-g}, we obtain, for any $a\in [0,\infty)$,
\begin{align*}
\|g_a\|_X=\left\|\lim_{t\to 0^+}|F(\cdot,t+a)|\right\|_X
\leq \varliminf_{t\to 0^+}\|\,|F(\cdot,t+a)|\,\|_X
\leq \|F\|_{\bh_X(\rp)}.
\end{align*}
This implies $g_a\in X$ and hence finishes the proof of Lemma \ref{f-fp}.
\end{proof}

Using Lemma \ref{f-fp}, we obtain the following conclusion.

\begin{proposition}\label{f-u}
Let $X$ be a BQBF space.
Assume that $X$ satisfies both Assumptions \ref{a} and
\ref{a2} with $s\in(\frac{n-1}{n},1]$ and $\theta\in[\frac{n-1}{n},s)$.
If $F:=(u_0,u_1,\dots,u_n)\in\bh_X(\rp)$, then
$u_0\in H_X(\rp)$ and there exists a positive constant $C$, independent of $F$, such that
\begin{equation*}
\|u_0\|_{H_X(\rp)}\leq C\|F\|_{\bh_X(\rp)}.
\end{equation*}
\end{proposition}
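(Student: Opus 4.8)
The plan is to control the nontangential maximal function $u_0^\ast$ by the powered Hardy--Littlewood maximal operator $M^{(\theta)}$ applied to the boundary function of $F$, and then to combine the boundedness of $M^{(\theta)}$ on $X$ with the Poisson majorization provided by Lemma \ref{f-fp}. Since $F=(u_0,u_1,\dots,u_n)$ is a harmonic vector, its first component $u_0$ is harmonic on $\rp$; thus, by Definition \ref{def-h-x}(ii), it suffices to prove that $u_0^\ast\in X$ together with the bound $\|u_0^\ast\|_X\ls\|F\|_{\bh_X(\rp)}$.

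First, I would take $a:=0$ and $q:=\theta$ in Lemma \ref{f-fp}; this is legitimate because $\theta\in[\frac{n-1}{n},s)$. Writing $g_0(\cdot):=\lim_{t\to0^+}|F(\cdot,t)|$, the lemma gives, for any $(y,t)\in\rp$, the pointwise majorization $|F(y,t)|^\theta\le P_t\ast(g_0)^\theta(y)$, as well as $g_0\in X$ with $\|g_0\|_X\le\|F\|_{\bh_X(\rp)}$. Since $|u_0(y,t)|\le|F(y,t)|$ directly from the definition of $|F|$, I would thus obtain $|u_0(y,t)|\le[P_t\ast(g_0)^\theta(y)]^{1/\theta}$ for any $(y,t)\in\rp$. (Here the embedding Lemma \ref{emb} guarantees that $g_0$ is locally integrable, so that $M^{(\theta)}(g_0)$ below is well defined.)

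The key analytic ingredient is the classical pointwise estimate that the nontangential maximal function of a Poisson integral is dominated by the Hardy--Littlewood maximal function, namely, for any nonnegative $h$ and any $x\in\rn$,
$$
\sup_{\{(y,t)\in\rp:\ |y-x|<t\}}P_t\ast h(y)\ls M(h)(x)
$$
(see, for instance, \cite{s70,s93}). Applying this with $h:=(g_0)^\theta$ and taking the supremum over the cone $\{(y,t)\in\rp:\ |y-x|<t\}$ in the pointwise bound of the previous paragraph, I would arrive at
$$
u_0^\ast(x)\ls\lf[M\lf((g_0)^\theta\r)(x)\r]^{1/\theta}=M^{(\theta)}(g_0)(x)
$$
for any $x\in\rn$.

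Finally, I would take the $X$-quasi-norm and estimate $\|M^{(\theta)}(g_0)\|_X$. Specializing Assumption \ref{a} to the single function $f_1:=g_0$ (and $f_j:=0$ for $j\ge2$) collapses the Fefferman--Stein vector-valued inequality \eqref{ma} to the scalar bound $\|M^{(\theta)}(g_0)\|_X\ls\|g_0\|_X$. Chaining the two previous displays with Definition \ref{bfs}(ii) and the estimate $\|g_0\|_X\le\|F\|_{\bh_X(\rp)}$ from Lemma \ref{f-fp}, I would conclude that
$$
\|u_0\|_{H_X(\rp)}=\|u_0^\ast\|_X\ls\|M^{(\theta)}(g_0)\|_X\ls\|g_0\|_X\le\|F\|_{\bh_X(\rp)},
$$
which yields both $u_0\in H_X(\rp)$ and the claimed inequality. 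The only substantive step is the Poisson majorization of $|F|^\theta$, which is exactly Lemma \ref{f-fp} (itself resting on the subharmonicity of $|F|^\theta$ for $\theta\ge\frac{n-1}{n}$ and the embedding Lemma \ref{emb}); granting it, the cone estimate for the Poisson kernel and the single-function reduction of Assumption \ref{a} are both routine, so I expect no genuine obstacle beyond correctly invoking Lemma \ref{f-fp}.
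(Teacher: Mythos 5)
Your proposal is correct and follows essentially the same route as the paper: both invoke Lemma \ref{f-fp} with $a=0$ to majorize $|F|^q$ by a Poisson integral of a boundary function, dominate the resulting nontangential maximal function by the Hardy--Littlewood maximal function on cones, and then apply the single-function case of Assumption \ref{a} (equivalently, the boundedness of $M$ on $X^{1/q}$) together with $\|g_0\|_X\le\|F\|_{\bh_X(\rp)}$. The only cosmetic differences are that you fix $q=\theta$ where the paper allows any $q\in[\frac{n-1}{n},\theta]$, and you cite the classical cone estimate for the Poisson kernel where the paper writes out the dyadic-annuli computation.
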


\begin{proof}
Assume that $F\in\bh_X(\rp)$. By Lemma \ref{f-fp},
we know that there exists a non-negative function
$h\in X$ such that, for any $(x,t)\in\rp$,
$$
|F(x,t)|^q\leq P_t\ast h^q(x),
$$
where $q\in[\frac{n-1}{n},\theta]$.
From this, we deduce that, for any $x\in\rn$,
\begin{align}\label{3.16x}
[u_0^\ast(x)]^q&=\sup_{\{(y,t)\in\rp:\ |y-x|<t\}}|u_0(y,t)|^q\leq \sup_{\{(y,t)\in\rp:\ |y-x|<t\}}|F(y,t)|^q\\
&\leq \sup_{\{(y,t)\in\rp:\ |y-x|<t\}}  P_t\ast h^q(y).\notag
\end{align}
Observe that, for any $x\in\rn$ and $(y,t)\in\rp$ with $|y-x|<t$,
\begin{align}\label{eq-hp-mh}
P_t\ast h^q(y)
&\sim \int_{\rn}\frac{t[h(z)]^q}{(t^2+|y-z|^2)^{(n+1)/2}}\,dz\\
&\sim \int_{B(x,2t)}\frac{t[h(z)]^q}{(t^2+|y-z|^2)^{(n+1)/2}}\,dz+\sum_{j=1}^\infty \int_{B(x,2^{j+1}t)\setminus B(x,2^jt)}
\cdots\notag\\
&\lesssim t^{-n}\int_{B(x,2t)}[h(z)]^q\,dz
+\sum_{j=1}^\infty 2^{-j}2^{-jn}t^{-n}
\int_{B(x,2^{j+1}t)}[h(z)]^q\,dz\notag\\
&\lesssim M(h^q)(x),\notag
\end{align}
which, together with \eqref{3.16x}, implies that
\begin{equation}\label{eq-u-m}
(u_0^\ast)^q\lesssim M(h^q).
\end{equation}
By Assumption \ref{a},
$q\in[\frac{n-1}{n},\theta]$, and \cite[Lemma 2.9]{cwyz},
we find that $M$ is bounded on $X^{1/q}$.
Combining this, \eqref{eq-u-m}, and Lemma \ref{f-fp},
we conclude that
\begin{align}\label{3.15x}
\|u_0\|_{H_X(\rp)}&=\|(u_0^\ast)^q\|_{X^{1/q}}^{1/q}
\lesssim\|M(h^q)\|_{X^{1/q}}^{1/q}\lesssim \|h^q\|_{X^{1/q}}^{1/q}\sim \|h\|_X\lesssim\|F\|_{\bh_X(\rp)}.
\end{align}
This finishes the proof of Proposition \ref{f-u}.
\end{proof}

Using Proposition \ref{f-u}, we obtain the following corollary.

\begin{corollary}\label{f-u-i}
Let $X$ be a BQBF space.
Assume that $X$ satisfies both Assumptions \ref{a} and
\ref{a2} with $s\in(\frac{n-1}{n},1]$ and $\theta\in[\frac{n-1}{n},s)$.
If $F:=(u_0,u_1,\dots,u_n)\in\bh_{X,2}(\rp)$, then
$u_0\in H_{X,2}(\rp)$ and there exists a positive constant $C$, independent of $F$, such that
\begin{equation*}
\|u_0\|_{H_X(\rp)}\leq C\|F\|_{\bh_X(\rp)}.
\end{equation*}
\end{corollary}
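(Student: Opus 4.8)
The plan is to reduce everything to Proposition \ref{f-u} by working with the approximating sequences that define the two ``$2$-subspaces''. Fix $F:=(u_0,u_1,\dots,u_n)\in\bh_{X,2}(\rp)$. By Definition \ref{hx-v}(ii), there exists a sequence $\{F_k\}_{k\in\nn}\subset\bh_X(\rp)\cap\bh^2(\rp)$, say $F_k:=(u_{0,k},u_{1,k},\dots,u_{n,k})$, such that $\lim_{k\to\infty}F_k=F$ in $\bh_X(\rp)$. The goal is to show that $\{u_{0,k}\}_{k\in\nn}$ is an admissible approximating sequence for $u_0$ in the sense of Definition \ref{def-h-x}(iii).

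First I would check that, for every $k\in\nn$, $u_{0,k}\in H_X(\rp)\cap H^2(\rp)$. Since $F_k\in\bh_X(\rp)$, Proposition \ref{f-u} gives $u_{0,k}\in H_X(\rp)$ together with $\|u_{0,k}\|_{H_X(\rp)}\ls\|F_k\|_{\bh_X(\rp)}$. For the membership in $H^2(\rp)$, I would use the pointwise bound $|u_{0,k}(x,t)|\le|F_k(x,t)|$ for any $(x,t)\in\rp$, which, by Definitions \ref{def-h-p} and \ref{hx-v}(i), yields
\[
\|u_{0,k}\|_{H^2(\rp)}=\sup_{t\in(0,\infty)}\|u_{0,k}(\cdot,t)\|_{L^2(\rn)}\le\sup_{t\in(0,\infty)}\|\,|F_k(\cdot,t)|\,\|_{L^2(\rn)}=\|F_k\|_{\bh^2(\rp)}<\infty.
\]

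The main step is to prove that $\lim_{k\to\infty}u_{0,k}=u_0$ in $H_X(\rp)$. The key observation is that, for any $k\in\nn$, the harmonic vector $F_k-F$ again satisfies the generalized Cauchy--Riemann equation \eqref{eq-cr}, because \eqref{eq-cr} is linear and homogeneous; moreover, from the pointwise inequality $|(F_k-F)(\cdot,t)|\le|F_k(\cdot,t)|+|F(\cdot,t)|$, Definition \ref{bfs}(ii), and the quasi-triangle inequality of $\|\cdot\|_X$, it follows that $\|F_k-F\|_{\bh_X(\rp)}<\infty$, and hence $F_k-F\in\bh_X(\rp)$. Since the zeroth component of $F_k-F$ is exactly $u_{0,k}-u_0$, applying Proposition \ref{f-u} to $F_k-F$ (with the constant $C$ therein independent of the vector) gives
\[
\|u_{0,k}-u_0\|_{H_X(\rp)}\ls\|F_k-F\|_{\bh_X(\rp)}\to0
\]
as $k\to\infty$. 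Combining this with the previous step and Definition \ref{def-h-x}(iii), I would conclude that $u_0\in H_{X,2}(\rp)$; the desired quasi-norm estimate $\|u_0\|_{H_X(\rp)}\le C\|F\|_{\bh_X(\rp)}$ is then immediate from Proposition \ref{f-u} applied to $F\in\bh_X(\rp)$ itself.

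The only genuinely nonroutine point is the verification that $F_k-F\in\bh_X(\rp)$, that is, that the class of harmonic vectors underlying $\bh_X(\rp)$ is stable under subtraction; this rests on the linearity of the Cauchy--Riemann system \eqref{eq-cr} and on the quasi-triangle inequality for the quasi-norm of $X$. Everything else is a direct invocation of Proposition \ref{f-u} and of the relevant definitions, so I do not expect any serious obstacle.
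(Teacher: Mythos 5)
Your proposal is correct and follows essentially the same route as the paper: both obtain $u_{0,k}\in H_X(\rp)\cap H^2(\rp)$ from Proposition \ref{f-u} together with the pointwise bound $|u_{0,k}|\le|F_k|$, and both get the convergence $u_{0,k}\to u_0$ in $H_X(\rp)$ by applying Proposition \ref{f-u} to the difference $F_k-F$ (the paper does this implicitly, while you spell out why $F_k-F\in\bh_X(\rp)$). No gaps.
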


\begin{proof}
Fix an $F:=(u_0,u_1,\dots,u_n)\in\bh_{X,2}(\rp)$. By
Definition \ref{hx-v}(ii), for any $k\in\nn$, we  find an
\begin{equation}\label{eq-f-k}
F_k:=(u_{0,k},u_{1,k},\dots,u_{n,k})\in\bh_{X}(\rp)\cap\bh^{2}(\rp)
\end{equation}
such that
\begin{equation}\label{3.22x}
F=\lim_{k\to\infty}F_k
\end{equation}
in $\bh_{X}(\rp)$
and, for any $k\in\nn$,
\begin{equation}\label{eq-fk-fi}
\|F_k\|_{\bh_X(\rp)}\lesssim\|F\|_{\bh_X(\rp)}.
\end{equation}
From both Proposition \ref{f-u} and \eqref{eq-fk-fi}, we deduce that
$$u_0\in H_{X}(\rp)\ \text{and}\
\|u_0\|_{H_X(\rp)}\lesssim\|F\|_{\bh_X(\rp)}
$$
and, for any $k\in\nn$,
$$u_{0,k}\in H_{X}(\rp)\ \text{and}\
\left\|u_{0,k}\right\|_{H_X(\rp)}\lesssim\|F_k\|_{\bh_X(\rp)}\lesssim\|F\|_{\bh_X(\rp)}.
$$
On the other hand,  by both Definitions \ref{def-h-p} and
\ref{hx-v}, and by \eqref{eq-f-k}, we find  that, for any $k\in\nn$,
$$
\left\|u_{0,k}\right\|_{H^2(\rp)}\le
\sup_{t\in(0,\infty)}\||F_k(\cdot,t)|\|_{L^2(\rn)}=\|F_k\|_{\bh^2(\rp)}
$$
and hence
$u_{0,k}\in H_{X}(\rp)\cap H^2(\rp)$.
Moreover, using the definition of $\bh_X(\rp)$,
Proposition \ref{f-u}, and \eqref{3.22x},
we conclude that
$$
\left\|u_0-u_{0,k}\right\|_{H_X(\rp)}\lesssim\|F-F_k\|_{\bh_X(\rp)}\to 0
$$
as $k\to\infty$. This implies that $u_0\in H_{X,2}(\rp)$
and hence finishes the proof of Corollary \ref{f-u-i}.
\end{proof}

Now, we establish the relation between $H_X(\rn)$ and $\bh_{X,2}(\rp)$.

\begin{proposition}\label{u-f}
Assume that $X$ is a BQBF space, satisfies both Assumptions \ref{a} and
\ref{a2}, and has an absolutely continuous quasi-norm.
If $f\in H_X(\rn)$, then there exists an
$$
F:=(u_0,u_1,\dots,u_n)\in \bh_{X,2}(\rp)
$$
such that $F$ satisfies \eqref{eq-cr} and, for any
$(x,t)\in\rp$, $u_0(x,t)=P_t\ast f(x)$.
Moreover, there exists a positive constant $C$, independent of $f$,
such that $\|F\|_{\bh_X(\rp)}\leq C\|f\|_{H_X(\rn)}$.
\end{proposition}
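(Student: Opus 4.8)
The plan is to realize $F$ as the Poisson extension of the vector $(f,R_1f,\dots,R_nf)$ built from the conjugate Poisson kernels, circumventing the purely distributional nature of $f$ by an $L^2$-approximation and then passing to the limit in $\bh_X(\rp)$. First I would use Lemma \ref{den-h} to pick a sequence $\{f_k\}_{k\in\nn}\subset L^2(\rn)\cap H_X(\rn)$ with $\lim_{k\to\infty}f_k=f$ in both $H_X(\rn)$ and $\cs'(\rn)$ and $\|f_k\|_{H_X(\rn)}\lesssim\|f\|_{H_X(\rn)}$. For each $k$ I would set $u_{0,k}(\cdot,t):=P_t\ast f_k$ and, for $j\in\{1,\dots,n\}$, $u_{j,k}(\cdot,t):=P_t\ast(R_jf_k)$, which is the convolution of $f_k$ with the $j$-th conjugate Poisson kernel. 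Since $f_k\in L^2(\rn)$ we have $R_jf_k\in L^2(\rn)$, so the classical $H^2$-theory (see \cite[Theorem 2.1(a)]{sw71}) guarantees that $F_k:=(u_{0,k},u_{1,k},\dots,u_{n,k})\in\bh^2(\rp)$, that each component is harmonic, and that $F_k$ satisfies the generalized Cauchy--Riemann equations \eqref{eq-cr}.

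The crux is the uniform bound $\|F_k\|_{\bh_X(\rp)}\lesssim\|f_k\|_{H_X(\rn)}$. Here I would compare $|F_k(\cdot,t)|$ with the nontangential maximal functions of its components. Writing $R_0:=\mathrm{id}$, for every $(x,t)\in\rp$ one has the pointwise domination $|F_k(x,t)|\le\sum_{j=0}^n|u_{j,k}(x,t)|\le\sum_{j=0}^n u_{j,k}^\ast(x)$, where the right-hand side is independent of $t$; since $n$ is fixed, the quasi-triangle inequality in $X$ then yields $\|F_k\|_{\bh_X(\rp)}=\sup_{t\in(0,\infty)}\|\,|F_k(\cdot,t)|\,\|_X\lesssim\sum_{j=0}^n\|u_{j,k}^\ast\|_X$. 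Observing that $u_{j,k}^\ast=M(R_jf_k;P)$ is precisely the Poisson nontangential maximal function of the bounded distribution $R_jf_k\in L^2(\rn)$, Theorem \ref{thm-p} gives $\|u_{j,k}^\ast\|_X\sim\|R_jf_k\|_{H_X(\rn)}$. The main obstacle, as I expect it, is to bound $\|R_jf_k\|_{H_X(\rn)}$ by $\|f_k\|_{H_X(\rn)}$: this is exactly the statement that the first-order Riesz transforms preserve the $H_X$-structure, which I would resolve by invoking the boundedness of Calder\'on--Zygmund operators on $H_X(\rn)$ obtained in \cite{wyy20} (valid in the present first-order regime $\theta\in[\frac{n-1}{n},s)$). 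Alternatively, one may avoid citing Riesz boundedness on $H_X(\rn)$ and instead exploit the Cauchy--Riemann structure through the subharmonicity of $|F_k|^q$ for $q\in[\frac{n-1}{n},\theta]$ (see \cite[p.\,234, Theorem 4.14]{sw71}), majorizing as in Lemma \ref{f-fp} and \eqref{eq-hp-mh} to obtain $(|F_k|)^\ast\lesssim[M(g_k^q)]^{1/q}$ with $M$ bounded on $X^{1/q}$ via \cite[Lemma 2.9]{cwyz}.

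Finally, with the uniform estimate in hand, I would pass to the limit. Because $F_k-F_j$ is the conjugate-harmonic vector attached to $f_k-f_j\in L^2(\rn)\cap H_X(\rn)$, the same bound gives $\|F_k-F_j\|_{\bh_X(\rp)}\lesssim\|f_k-f_j\|_{H_X(\rn)}\to0$, so $\{F_k\}_{k\in\nn}$ is Cauchy in $\bh_X(\rp)$ and converges to some $F\in\bh_X(\rp)$; by Definition \ref{hx-v} this places $F$ in $\bh_{X,2}(\rp)$. Convergence in $\bh_X(\rp)$ forces, via the weighted-$L^s$ embedding of Lemma \ref{emb} exactly as in Lemma \ref{p-n} and Proposition \ref{well-def}, locally uniform convergence of $F_k$ to $F$ on $\rp$; since harmonic functions converging locally uniformly converge together with all their derivatives, $F$ again satisfies \eqref{eq-cr}, and each component is harmonic. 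For the first component, Definition \ref{def-pi-o} and the convergence $f_k\to f$ in $\cs'(\rn)$ give $u_0(x,t)=\lim_{k\to\infty}P_t\ast f_k(x)=P_t\ast f(x)$ for every $(x,t)\in\rp$. Letting the uniform bound pass to the limit, one obtains $\|F\|_{\bh_X(\rp)}\le\varliminf_{k\to\infty}\|F_k\|_{\bh_X(\rp)}\lesssim\|f\|_{H_X(\rn)}$, which completes the construction.
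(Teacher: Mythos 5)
Your proposal is correct and follows essentially the same route as the paper: approximate $f$ by $\{f_k\}\subset L^2(\rn)\cap H_X(\rn)$ via Lemma \ref{den-h}, form the conjugate harmonic system $F_k=(P_t\ast f_k,P_t\ast R_1(f_k),\dots,P_t\ast R_n(f_k))$, obtain the uniform bound from Theorem \ref{thm-p} together with the boundedness of the Riesz transforms on $H_X(\rn)$ (Lemma \ref{bound-re}), and pass to the limit using Proposition \ref{well-def}, Lemma \ref{fatou}, and the Cauchy estimate $\|F_k-F_j\|_{\bh_X(\rp)}\lesssim\|f_k-f_j\|_{H_X(\rn)}$ to land in $\bh_{X,2}(\rp)$. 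The only stylistic difference is that you bound $|F_k(\cdot,t)|$ by the nontangential maximal functions of the components rather than by $\|u_{j,k}(\cdot,t)\|_X$ directly, which amounts to the same estimate; note also that your alternative subharmonicity route would impose the restriction $\theta\in[\frac{n-1}{n},s)$, which the proposition as stated does not assume, so the Riesz-boundedness route you give first is the one to keep.
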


To show Proposition \ref{u-f}, we need the following
boundedness of Riesz transforms on $H_X(\rn)$ which is a
combination of both \cite[Theorem 3.14]{wyy20}  and Lemma \ref{emb};
we omit the details here.

\begin{lemma}\label{bound-re}
Assume that $X$ is a BQBF space and satisfies both Assumptions \ref{a} and
\ref{a2}. Then, for any $j\in\{1,\dots,n\}$,
the Riesz transform $R_j$ is bounded on $H_X(\rn)$.
\end{lemma}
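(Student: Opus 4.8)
The plan is to realize each $R_j$ as a Calder\'on--Zygmund operator and to deduce its boundedness on $H_X(\rn)$ from the general boundedness theorem for such operators established in \cite[Theorem 3.14]{wyy20}, using Lemma \ref{emb} to verify the hypotheses of that theorem for the present $X$. First I would record the relevant properties of $R_j$: its convolution kernel $K_j(x):=c_{(n)}x_j/|x|^{n+1}$ is homogeneous of degree $-n$ and infinitely differentiable on $\rn\setminus\{\mathbf{0}\}$, so that, for every multi-index $\alpha$, $|\partial^\alpha K_j(x)|\lesssim|x|^{-n-|\alpha|}$; moreover $R_j$ is bounded on $L^2(\rn)$, and the antisymmetry $K_j(-x)=-K_j(x)$ gives the cancellation $R_j^\ast(\mathbf{1})=0$. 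Thus $R_j$ is a Calder\'on--Zygmund operator of exactly the type to which \cite[Theorem 3.14]{wyy20} applies.

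Next I would confirm that $X$ meets the structural requirements of \cite[Theorem 3.14]{wyy20}. Since $X$ satisfies Assumption \ref{a2}, Remark \ref{def-w}(i) shows that $M$ is bounded on $(X^{1/s})'$; hence the hypotheses of Lemma \ref{emb} hold and there exist an $s\in(0,1]$, an $\varepsilon\in(0,1)$, and a weight $w:=[M(\mathbf{1}_{B(\mathbf{0},1)})]^\varepsilon\in A_1(\rn)$ such that $X$ continuously embeds into $L^s_w(\rn)$. This embedding plays two roles. On one hand, together with the density statement in Lemma \ref{den-h} and the continuous embedding $H_X(\rn)\hookrightarrow\cs'(\rn)$, it underlies the very definition of $R_jf$ for a general $f\in H_X(\rn)$: one defines $R_jf$ classically for $f$ in the dense subspace $L^2(\rn)\cap H_X(\rn)$ and then extends by continuity, the $L^s_w(\rn)$-control keeping the extension consistent. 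On the other hand, the embedding locates $X$ with respect to a concrete weighted Lebesgue space, against which the order of the kernel size, regularity, and cancellation conditions demanded by \cite[Theorem 3.14]{wyy20} can be read off in terms of the indices $s$ and $\theta$ coming from Assumptions \ref{a} and \ref{a2}.

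With these preparations, the proof reduces to applying \cite[Theorem 3.14]{wyy20} to $T:=R_j$: Assumption \ref{a} supplies the Fefferman--Stein vector-valued maximal inequality on $X$, Assumption \ref{a2} together with Lemma \ref{emb} supplies the associate-space and embedding conditions, and the kernel properties recorded above verify the remaining hypotheses on the operator. The theorem then yields a positive constant $C$, independent of $f$, such that $\|R_jf\|_{H_X(\rn)}\le C\|f\|_{H_X(\rn)}$ for every $f\in H_X(\rn)$, and letting $j$ range over $\{1,\dots,n\}$ completes the proof.

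The step I expect to be the main obstacle is the precise matching of hypotheses: one must check that the kernel conditions imposed by \cite[Theorem 3.14]{wyy20} are exactly those carried by the smooth, antisymmetric Riesz kernel, and that the index requirements of that theorem are met by an $X$ satisfying only Assumptions \ref{a} and \ref{a2}. This is where Lemma \ref{emb} is indispensable, since it converts the abstract quasi-norm of $X$---which, as emphasized in the introduction, has no explicit expression---into a concrete weighted-Lebesgue-space estimate against which the order conditions can be verified. Once this translation is in place, confirming that the infinitely differentiable kernel $K_j$ with $R_j^\ast(\mathbf{1})=0$ satisfies all the required conditions is routine.
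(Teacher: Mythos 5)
Your proposal takes essentially the same route as the paper: the authors prove Lemma \ref{bound-re} precisely by combining the Calder\'on--Zygmund boundedness theorem \cite[Theorem 3.14]{wyy20} with the weighted-Lebesgue embedding of Lemma \ref{emb}, omitting the details you have filled in. Your verification of the kernel properties of $R_j$ and of the structural hypotheses on $X$ via Assumptions \ref{a} and \ref{a2} is a correct expansion of what the paper leaves implicit.
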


Next, we show Proposition \ref{u-f}.

\begin{proof}[Proof of Proposition \ref{u-f}]
 Assume $f\in H_X(\rn)$.
By Lemma \ref{den-h}, we find a sequence
$\{f_k\}_{k\in\nn}\subset H_X(\rn)\cap L^2(\rn)$
such that
\begin{equation}\label{eq-fk-f2}
\lim_{k\to\infty} f_k=f
\end{equation}
in both $H_X(\rn)$ and $\cs'(\rn)$ and, for any $k\in\nn$,
\begin{equation}\label{3.19x}
\left\|f_k\right\|_{H_X(\rn)}\lesssim \|f\|_{H_X(\rn)}.
\end{equation}
For any $k\in\nn$ and $(x,t)\in\rp$, let
$$
u_{0,k}(x,t) :=P_t\ast f_k(x)
$$
and, for any $j\in\{1,\dots,n\}$,
$$
u_{j,k}(x,t) :=Q_t^{(j)}\ast f_k(x),
$$
where, for any $j\in\{1,\dots,n\}$, $Q_t^{(j)}$ is the \emph{$j$-th
conjugate Poisson kernel} defined by setting, for any $(x,t)\in\rp$,
$$
Q_t^{(j)}(x):=c_{(n)}\frac{x_j}{(t^2+|x|^2)^{(n+1)/2}}
$$
with $c_{(n)}$ the same as in \eqref{unit-b}.
Moreover, for any $k\in\nn$, let
$$
F_k:=\left(u_{0,k},u_{1,k},\dots,u_{n,k}\right).
$$
From $f_k\in L^2(\rn)$ and \cite[p.\,236, Theorem 4.17(i)]{sw71},
we infer that, for any $k\in\nn$, $F_k\in\bh^2(\rp)$ and
satisfies \eqref{eq-cr}. Using Theorem \ref{thm-p} and
\eqref{3.19x}, we conclude that, for any $k\in\nn$
and $t\in(0,\infty)$,
\begin{equation}\label{3.19y}
\left\|u_{0,k}(\cdot,t)\right\|_X\lesssim\left\|f_k\right\|_{H_X(\rn)}\lesssim\|f\|_{H_X(\rn)}.
\end{equation}
Moreover, from \cite[p.\,65, Theorem 3 and p.\,78, Item 4.3]{s70},
we deduce that, for any $k\in\nn$, $j\in\{1,\dots,n\}$,
and $(x,t)\in\rp$,
$$
Q_t^{(j)}\ast f_k(x)=P_t\ast R_j(f_k)(x).
$$
This, together with Theorem \ref{thm-p},
Lemma \ref{bound-re}, and \eqref{3.19x}, implies that, for any
$k\in\nn$, $j\in\{1,\dots,n\}$,
and $t\in(0,\infty)$,
\begin{align*}
\left\|u_{j,k}(\cdot,t)\right\|_X
&=\left\|P_t\ast R_j(f_k)\right\|_X
\lesssim \left\|R_j(f_k)\right\|_{H_X(\rn)}\\
&\lesssim \left\|f_k\right\|_{H_X(\rn)}\lesssim \|f\|_{H_X(\rn)},
\end{align*}
which, combined with \eqref{3.19y},  shows that
\begin{equation}\label{eq-fk-f-norm}
\left\|F_k\right\|_{\bh_X(\rp)}=\sup_{t\in(0,\infty)}\left\||F_k(\cdot,t)|\right\|_X
\lesssim\sup_{t\in(0,\infty)}\left\|\sum_{j=0}^n
\left|u_{j,k}(\cdot,t)\right|\right\|_X\lesssim\|f\|_{H_X(\rn)}.
\end{equation}
Thus, for any $k\in\nn$,
\begin{equation}\label{f-h-h}
F_k\in \left[\bh_X(\rp)\cap\bh^2(\rp)\right].
\end{equation}

On the other hand, by \eqref{eq-fk-f2}, the well-known
boundedness of Riesz transforms on $L^2(\rn)$, and Lemma \ref{bound-re},
we find that, for any $j\in\{1,\dots,n\}$,
\begin{equation*}
\{R_j(f_k)\}_{k\in\nn}\subset H_X(\rn)\cap L^2(\rn)\
\text{and}\
\lim_{k\to\infty} R_j(f_k)=R_j(f)
\end{equation*}
in $H_X(\rn)$.
For any $(x,t)\in\rp$, let
$$
F(x,t):=(P_t\ast f(x), P_t\ast R_1(f)(x), \dots, P_t\ast R_n(f)(x)).
$$
Using Proposition \ref{well-def}, we find that, for any $j\in\{1,\dots,n\}$
and $(x,t)\in\rp$,
$$
\lim_{k\to\infty} P_t\ast R_j(f_k)(x)=P_t\ast R_j(f)(x)
$$
uniformly on any compact set of $\rp$. From this and the fact that
$F_k$ satisfies \eqref{eq-cr}, we deduce that $F$
also satisfies \eqref{eq-cr} and, for any $(x,t)\in\rp$,
$$
|F(x,t)|= \lim_{k\to\infty} \left|F_k(x,t)\right|.
$$
By this, Lemma \ref{fatou}, and \eqref{eq-fk-f-norm}, we conclude that
$$
\|F\|_{\bh_X(\rp)}=\sup_{t\in(0,\infty)}\left\|\lim_{k\to\infty}
\left|F_k(x,t)\right|\right\|_X
\leq \sup_{t\in(0,\infty)}\varliminf_{k\to\infty} \left\|\,
\left|F_k(x,t)\right|\,\right\|_X\lesssim \|f\|_{H_X(\rn)}.
$$
Thus, $F\in \bh_X(\rp)$. Finally, we show that
$F\in \bh_{X,2}(\rp)$.
Indeed, using the definitions of both $F$ and $F_k$, both (ii) and (iii) of
Corollary \ref{co-pt}, and Lemma \ref{bound-re}, we find that
\begin{align*}
\left\|F-F_k\right\|_{\bh_X(\rp)}
&\lesssim \sup_{t\in(0,\infty)}\left\|\left|P_t\ast f-P_t\ast f_k\right|
+\sum_{j=1}^n\left|P_t\ast R_j(f_k)-P_t\ast R_j(f)\right|\right\|_X\\
&\lesssim \sup_{t\in(0,\infty)}\left\|P_t\ast(f-f_k)\right\|_X+
\sup_{t\in(0,\infty)}\sum_{j=1}^n\left\|P_t\ast R_j(f-f_k)\right\|_X\\
&\lesssim \left\|f-f_k\right\|_{H_X(\rn)} \to 0
\end{align*}
as $k\to\infty$. This, together with \eqref{f-h-h},  proves
that $F\in \bh_{X,2}(\rp)$
and hence finishes the proof of Proposition \ref{u-f}.
\end{proof}

Combining Proposition \ref{lem-hx-hx2},
Corollary \ref{f-u-i},
and Proposition \ref{u-f},  we obtain the following
isomorphism theorem on $H_X(\rn)$, $H_{X,2}(\rp)$,
and $\bh_{X,2}(\rp)$; we omit the details here.

\begin{theorem}\label{isom}
Assume that $X$ is a BQBF space, satisfies both Assumptions \ref{a} and
\ref{a2} with $s\in(\frac{n-1}{n},1]$
and $\theta\in[\frac{n-1}{n},s)$, and has an absolutely
continuous quasi-norm.
Then the following statements are equivalent:
\begin{enumerate}
\item[{\rm (i)}] $u\in H_{X,2}(\rp)$;
\item[{\rm (ii)}] there exists an $f\in H_X(\rn)$ such
that $u(x,t)=P_t\ast f(x)$ for any $(x,t)\in\rp$;
\item[{\rm (iii)}] there exist harmonic functions $\{u_1,\dots,u_n\}$ on $\rp$
such that
$$F:=(u, u_1,\dots,u_n)\in \bh_{X,2}(\rp).$$
\end{enumerate}
Moreover,
$$
\|u\|_{H_{X,2}(\rp)}\sim \|f\|_{H_X(\rn)}\sim \|F\|_{\bh_{X,2}(\rp)},
$$
where the equivalence constants
are independent of $u$, $f$, and $F$.
\end{theorem}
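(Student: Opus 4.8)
The plan is to assemble the cycle of implications (i)$\Rightarrow$(ii)$\Rightarrow$(iii)$\Rightarrow$(i) by invoking the three results already established and then to chain together the accompanying norm estimates; since the hypotheses of the present theorem (a BQBF space satisfying Assumptions \ref{a} and \ref{a2} with $s\in(\frac{n-1}{n},1]$ and $\theta\in[\frac{n-1}{n},s)$, and possessing an absolutely continuous quasi-norm) subsume those of Propositions \ref{lem-hx-hx2} and \ref{u-f} and of Corollary \ref{f-u-i}, each may be applied directly. First I would dispose of the equivalence (i)$\Leftrightarrow$(ii), together with the comparison $\|u\|_{H_{X,2}(\rp)}\sim\|f\|_{H_X(\rn)}$, by a verbatim appeal to Proposition \ref{lem-hx-hx2}: a harmonic $u$ lies in $H_{X,2}(\rp)$ precisely when it is the Poisson extension $P_t\ast f$ of some $f\in H_X(\rn)$, and the two (quasi-)norms are then equivalent.

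Next, to obtain (ii)$\Rightarrow$(iii), I would apply Proposition \ref{u-f} to the $f\in H_X(\rn)$ furnished by (ii). This produces a harmonic vector $F:=(u_0,u_1,\dots,u_n)\in\bh_{X,2}(\rp)$ satisfying the generalized Cauchy--Riemann system \eqref{eq-cr} whose zeroth component is $u_0(x,t)=P_t\ast f(x)=u(x,t)$; hence $F=(u,u_1,\dots,u_n)$ supplies exactly the harmonic conjugates demanded by (iii), and Proposition \ref{u-f} simultaneously yields $\|F\|_{\bh_X(\rp)}\lesssim\|f\|_{H_X(\rn)}$. To close the cycle with (iii)$\Rightarrow$(i), I would invoke Corollary \ref{f-u-i}: for any $F=(u,u_1,\dots,u_n)\in\bh_{X,2}(\rp)$ its first component satisfies $u\in H_{X,2}(\rp)$ with $\|u\|_{H_X(\rp)}\lesssim\|F\|_{\bh_X(\rp)}$. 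Thus all three statements are mutually equivalent.

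For the norm equivalence I would chain the three estimates just recorded. Proposition \ref{lem-hx-hx2} gives $\|u\|_{H_{X,2}(\rp)}\sim\|f\|_{H_X(\rn)}$; the vector $F$ constructed in the step (ii)$\Rightarrow$(iii) satisfies $\|F\|_{\bh_{X,2}(\rp)}=\|F\|_{\bh_X(\rp)}\lesssim\|f\|_{H_X(\rn)}$ by Proposition \ref{u-f}; and Corollary \ref{f-u-i} supplies the reverse bound $\|f\|_{H_X(\rn)}\sim\|u\|_{H_{X,2}(\rp)}=\|u\|_{H_X(\rp)}\lesssim\|F\|_{\bh_{X,2}(\rp)}$. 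Combining these sandwiches the three quantities between one another, which proves $\|u\|_{H_{X,2}(\rp)}\sim\|f\|_{H_X(\rn)}\sim\|F\|_{\bh_{X,2}(\rp)}$ with constants independent of $u$, $f$, and $F$.

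The work here is essentially bookkeeping rather than new analysis, so the only point demanding care — and the place where a careless combination could go wrong — is the consistency of the objects threaded through the three statements: one must check that the zeroth component of the vector produced by Proposition \ref{u-f} coincides with the Poisson extension $P_t\ast f$ appearing in (ii), so that a single harmonic function $u$ simultaneously realizes all three conditions, and one should note that the reverse estimate $\|F\|_{\bh_{X,2}(\rp)}\lesssim\|f\|_{H_X(\rn)}$ is asserted for this canonically associated $F$ rather than for an arbitrary choice of harmonic conjugates (whereas the forward estimate $\|u\|_{H_X(\rp)}\lesssim\|F\|_{\bh_X(\rp)}$ from Corollary \ref{f-u-i} holds for every admissible $F$).
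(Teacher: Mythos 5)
Your proposal is correct and follows exactly the paper's route: the authors prove Theorem \ref{isom} by combining Proposition \ref{lem-hx-hx2} for (i)$\Leftrightarrow$(ii), Proposition \ref{u-f} for (ii)$\Rightarrow$(iii), and Corollary \ref{f-u-i} for (iii)$\Rightarrow$(i), chaining the accompanying norm estimates just as you do (the paper omits the bookkeeping entirely). Your closing remark about which $F$ the upper bound $\|F\|_{\bh_{X,2}(\rp)}\lesssim\|f\|_{H_X(\rn)}$ applies to is a sensible point of care that the paper glosses over.
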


Now, we establish the first order Riesz transform
characterization of $H_X(\rn)$. Let us first introduce
the following Riesz--Hardy space associated with $X$.

\begin{definition}\label{def-rhx}
Let $X$ be a BQBF space. The \emph{Riesz--Hardy space $H_{X,{\rm Riesz}}(\rn)$
associated with $X$} is defined to
be the set of all the $f\in\cs'(\rn)$
satisfying that there exists a positive constant $A$ and
a sequence $\{f_k\}_{k\in\nn}\subset L^2(\rn)$
such that
$\lim_{k\to\infty}f_k=f$
in $\cs'(\rn)$ and, for any $k\in\nn$,
\begin{equation}\label{eq-rh-norm}
\left\|f_k\right\|_X+\sum_{j=1}^n
\left\|R_j(f_k)\right\|_X\le A.
\end{equation}
Moreover, for any $f\in H_{X,{\rm Riesz}}(\rn)$,
$$
\|f\|_{H_{X,{\rm Riesz}}(\rn)}:=\inf\{A:\ A\ \text{satisfies \eqref{eq-rh-norm}}\}.
$$
\end{definition}

\begin{theorem}\label{thm-re}
Assume that $X$ is a BQBF space and satisfies both Assumptions \ref{a} and
\ref{a2} with $s\in(\frac{n-1}{n},1]$ and $\theta\in[\frac{n-1}{n},s)$.
Then the following statements hold true.
\begin{enumerate}
\item[{\rm(i)}] There exists a positive constant $C\in[0,\infty)$
such that, for any $f\in L^2(\rn)$,
$$
 \|f\|_{H_{X,{\rm Riesz}}(\rn)}\leq C \|f\|_{H_X(\rn)}.
$$
\item[{\rm(ii)}] There exists a positive constant $C\in[0,\infty)$
such that, for any $f\in \cs'(\rn)$,
$$
\|f\|_{H_X(\rn)}\leq C  \|f\|_{H_{X,{\rm Riesz}}(\rn)}.
$$
\item[{\rm(iii)}] Moreover, if $X$ has an absolutely continuous quasi-norm,
then
$$
H_X(\rn)=H_{X,{\rm Riesz}}(\rn)
$$
with equivalent (quasi-)norms.
\end{enumerate}
\end{theorem}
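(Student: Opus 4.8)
The plan is to treat the three assertions in turn, the middle one being the essential step. Fix once and for all an exponent $q\in[\frac{n-1}{n},\theta]$, which is possible precisely because $\theta\ge\frac{n-1}{n}$; by Assumption \ref{a}, $q\le\theta$, and \cite[Lemma 2.9]{cwyz}, the Hardy--Littlewood maximal operator $M$ is bounded on $X^{1/q}$, so in particular $M$ is bounded on $X^{r}$ with $r:=1/q\in[1,\infty)$ and Theorem \ref{thm-p} is applicable to $X$. For (i), let $f\in L^2(\rn)$ and assume $\|f\|_{H_X(\rn)}<\infty$. Then $f$ and each $R_j(f)$ lie in $L^2(\rn)$ and hence are bounded distributions, and letting $t\to0^+$ in \eqref{eq-max-p} gives $|g|\le M(g;P)$ almost everywhere for any such $g$. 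Thus Theorem \ref{thm-p} yields $\|f\|_X\le\|M(f;P)\|_X\ls\|f\|_{H_X(\rn)}$ and, together with Lemma \ref{bound-re}, $\|R_j(f)\|_X\le\|M(R_j f;P)\|_X\ls\|R_j f\|_{H_X(\rn)}\ls\|f\|_{H_X(\rn)}$. Hence the constant sequence $f_k:=f$ satisfies \eqref{eq-rh-norm} with $A\ls\|f\|_{H_X(\rn)}$, whence $\|f\|_{H_{X,{\rm Riesz}}(\rn)}\le A\ls\|f\|_{H_X(\rn)}$.

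The crux is (ii). Fix $f\in\cs'(\rn)$ with $\|f\|_{H_{X,{\rm Riesz}}(\rn)}<\infty$ and choose, via Definition \ref{def-rhx}, a sequence $\{f_k\}_{k\in\nn}\subset L^2(\rn)$ with $\lim_{k\to\infty}f_k=f$ in $\cs'(\rn)$ and $\|f_k\|_X+\sum_{j=1}^n\|R_j(f_k)\|_X\le A$ for an $A$ as close as we wish to $\|f\|_{H_{X,{\rm Riesz}}(\rn)}$. For each $k$ I would form, exactly as in the proof of Proposition \ref{u-f}, the harmonic vector $F_k:=(P_t\ast f_k,\,Q_t^{(1)}\ast f_k,\dots,Q_t^{(n)}\ast f_k)$ built from the conjugate Poisson kernels $Q_t^{(j)}$; since $f_k\in L^2(\rn)$, we have $F_k\in\bh^2(\rp)$ and $F_k$ satisfies \eqref{eq-cr}, with boundary magnitude $G_k:=[|f_k|^2+\sum_{j=1}^n|R_j(f_k)|^2]^{1/2}$ obeying $\|G_k\|_X\le\|\,|f_k|+\sum_{j=1}^n|R_j(f_k)|\,\|_X\ls A$ by Definition \ref{bfs}(ii). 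Because $F_k$ satisfies \eqref{eq-cr}, $|F_k|^q$ is subharmonic for $q\ge\frac{n-1}{n}$ (see \cite[p.\,234, Theorem 4.14]{sw71}); since $F_k\in\bh^2(\rp)$, arguing as in the proof of Lemma \ref{f-fp} (via \cite[Theorem 8]{n}) gives the pointwise majorization $|F_k(x,t)|^q\le P_t\ast(G_k)^q(x)$ for every $(x,t)\in\rp$. Estimating $P_t\ast(G_k)^q\ls M((G_k)^q)$ as in \eqref{eq-hp-mh}, we obtain $|F_k(\cdot,t)|\le M^{(q)}(G_k)$ for every $t\in(0,\infty)$, and the boundedness of $M$ on $X^{1/q}$ then yields
$$
\sup_{t\in(0,\infty)}\|\,|F_k(\cdot,t)|\,\|_X\le\|M^{(q)}(G_k)\|_X=\|M((G_k)^q)\|_{X^{1/q}}^{1/q}\ls\|G_k\|_X\ls A.
$$
Hence $F_k\in\bh_X(\rp)\cap\bh^2(\rp)$ with $\|F_k\|_{\bh_X(\rp)}\ls A$, so Proposition \ref{f-u} gives $\|M(f_k;P)\|_X\ls\|F_k\|_{\bh_X(\rp)}\ls A$, and Theorem \ref{thm-p} upgrades this to $\|f_k\|_{H_X(\rn)}\ls A$ for every $k$. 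Finally, since $f_k\to f$ in $\cs'(\rn)$, for each fixed $x\in\rn$ and $(y,t)\in\rp$ we have $\Phi_t\ast f(x-y)=\lim_{k\to\infty}\Phi_t\ast f_k(x-y)$, so taking the supremum over $(y,t)$ in Definition \ref{H} gives $M_b^{\ast\ast}(f,\Phi)(x)\le\varliminf_{k\to\infty}M_b^{\ast\ast}(f_k,\Phi)(x)$; invoking (ii) and (iii) of Definition \ref{bfs} as in the proof of Lemma \ref{fatou} then produces $\|f\|_{H_X(\rn)}=\|M_b^{\ast\ast}(f,\Phi)\|_X\le\varliminf_{k\to\infty}\|f_k\|_{H_X(\rn)}\ls A$. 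Letting $A\downarrow\|f\|_{H_{X,{\rm Riesz}}(\rn)}$ finishes (ii).

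For (iii), assume in addition that $X$ has an absolutely continuous quasi-norm. The bound $\|f\|_{H_X(\rn)}\ls\|f\|_{H_{X,{\rm Riesz}}(\rn)}$ is already (ii). Conversely, given $f\in H_X(\rn)$, I would use Lemma \ref{den-h} to pick $\{f_m\}_{m\in\nn}\subset L^2(\rn)\cap H_X(\rn)$ with $f_m\to f$ in $\cs'(\rn)$ and $\|f_m\|_{H_X(\rn)}\ls\|f\|_{H_X(\rn)}$; the estimate established in (i), applied to each $f_m$, gives $\|f_m\|_X+\sum_{j=1}^n\|R_j(f_m)\|_X\ls\|f_m\|_{H_X(\rn)}\ls\|f\|_{H_X(\rn)}$, so $\{f_m\}$ witnesses, through Definition \ref{def-rhx}, that $f\in H_{X,{\rm Riesz}}(\rn)$ with $\|f\|_{H_{X,{\rm Riesz}}(\rn)}\ls\|f\|_{H_X(\rn)}$. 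Combining the two inequalities yields $H_X(\rn)=H_{X,{\rm Riesz}}(\rn)$ with equivalent quasi-norms.

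The main obstacle is the middle step of (ii): converting the two \emph{scalar} controls $\|f_k\|_X$ and $\|R_j(f_k)\|_X$ into a single control of the \emph{vector} $F_k$ in $\bh_X(\rp)$. This is precisely where the hypothesis $\theta\ge\frac{n-1}{n}$ enters decisively and sharply, since one must choose a single exponent $q$ that is simultaneously $\ge\frac{n-1}{n}$ (so that $|F_k|^q$ is subharmonic and admits a Poisson majorant) and $\le\theta$ (so that $M$ is bounded on $X^{1/q}$ and the Fefferman--Stein type estimate closes); such a $q$ exists exactly when $\frac{n-1}{n}\le\theta$. A secondary point to watch is that (ii) carries no absolute-continuity hypothesis, so the isomorphism Theorem \ref{isom} is unavailable; I therefore pass to the limit not through $P_t\ast f$ but directly through the defining maximal function $M_b^{\ast\ast}(\cdot,\Phi)$ together with the Fatou property, both of which hold for an arbitrary BQBF space.
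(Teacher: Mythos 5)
Your proposal is correct and follows essentially the same route as the paper: part (i) via the pointwise domination of $f$ and $R_j(f)$ by a maximal function together with Lemma \ref{bound-re}; part (ii) via the conjugate harmonic system $F_k$, the subharmonicity of $|F_k|^q$ for a single $q\in[\frac{n-1}{n},\theta]$, the Poisson majorization of Lemma \ref{f-fp}, the boundedness of $M$ on $X^{1/q}$, Proposition \ref{f-u}, and a Fatou-type passage to the limit; and part (iii) via Lemma \ref{den-h}. The only (immaterial) deviations are that you use $M(\cdot;P)$ and Theorem \ref{thm-p} in (i) where the paper uses $M(\cdot;\Phi)$ and \cite[Theorem 3.1(i)]{shyy}, and that your final Fatou step runs through $M_b^{\ast\ast}(\cdot,\Phi)$ rather than $M(\cdot;\Phi)$.
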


To prove Theorem \ref{thm-re}, we need the following
concept of the radial decreasing function.

\begin{definition}
A function $f$ on $\rn$ is said to be \emph{radial decreasing} if $f$ satisfies:
\begin{enumerate}
\item[{\rm(i)}] for any $x,y\in\rn$ with $|x|=|y|$, $f(x)=f(y)$;
\item[{\rm(ii)}] for any $t\in(0,\infty)$, let
$\widetilde{f}(t):=f(x)$, where $x\in\rn$ such that $|x|=t$.
Then $\widetilde{f}$ is decreasing on $(0,\infty)$.
\end{enumerate}
\end{definition}

Now, we prove Theorem \ref{thm-re}.

\begin{proof}[Proof of Theorem \ref{thm-re}]
We first show (i). To this end, fix an $f\in L^2(\rn)$.
If $\|f\|_{H_X(\rn)}=\infty$, then (i) obviously
holds true. In what follows, we assume that
$\|f\|_{H_X(\rn)}<\infty$.
Choose a $\Phi\in\cs(\rn)$ to be positive and  radial decreasing
such that $\int_{\rn}\Phi(x)\,dx=1$.
Since $f\in L^2(\rn)$, from \cite[Corollary 2.9]{d}, it follows that
\begin{equation}\label{eq-f-mf}
f\leq M\left(f;\Phi\right)\ \text{and}\
R_j(f)\leq M\left(R_j(f);\Phi\right),
\end{equation}
where $M(f;\Phi)$ is the same
as in \eqref{2.6x}  and  $M(R_j(f);\Phi)$ is the same as
in \eqref{2.6x} via $f$ replaced by $R_j(f)$.
Using this, Definition \ref{bfs}(ii),
\cite[Theorem 3.1(i)]{shyy}, and Lemma \ref{bound-re},
we conclude that
\begin{align}\label{f-re-f}
&\left\|f\right\|_X
+\sum_{j=1}^n\left\|R_j(f)\right\|_X\\
&\quad\leq\left\|M\left(f;\Phi\right)\right\|_X
+\sum_{j=1}^n\left\|M\left(R_j(f);\Phi\right)\right\|_X
\lesssim\|f\|_{H_X(\rn)},\notag
\end{align}
which implies that $f\in H_{X,{\rm Riesz}}(\rn)$ and
$\|f\|_{H_{X,{\rm Riesz}}(\rn)}\lesssim\|f\|_{H_X(\rn)}$.
This finishes the proof of (i).

We next show (ii). To achieve this, fix an $f\in \cs'(\rn)$.
If $\|f\|_{H_{X,{\rm Riesz}}(\rn)}=\infty$, then (ii) obviously
holds true.
In what follows, we assume that $\|f\|_{H_{X,{\rm Riesz}}(\rn)}<\infty$.
By Definition \ref{def-rhx},
we find a sequence  $\{f_k\}_{k\in\nn}\subset L^2(\rn)$
such that
\begin{equation}\label{3.24x}
\lim_{k\to\infty}f_k=f
\end{equation}
in $\cs'(\rn)$ and, for any $k\in\nn$,
\begin{equation}\label{fk-re-no}
\left\|f_k\right\|_X+\sum_{j=1}^n\left\|R_j(f_k)\right\|_X\lesssim \|f\|_{H_{X,{\rm Riesz}}(\rn)}.
\end{equation}
For any $k\in\nn$ and $(x,t)\in\rp$, let
\begin{equation}\label{eq-f-v}
F_k(x,t):=\left(P_t\ast f_k(x),P_t\ast R_1(f_k)(x), \dots, P_t\ast R_n(f_k)(x)\right).
\end{equation}
Using $f_k\in L^2(\rn)$ and \cite[p.\,236, Theorem 4.17(i)]{sw71},
we conclude that, for any $k\in\nn$, $F_k\in\bh^2(\rp)$.
From this and Lemma \ref{f-fp}, we deduce that,
for any $q\in[\frac{n-1}{n},\theta]$ and $(x,t)\in\rp$,
\begin{equation}\label{eq-fk-hkp}
\left|F_k(x,t)\right|^q\leq  P_t \ast \left[h_k\right]^q(x),
\end{equation}
where, for any $x\in\rn$, $h_k(x):=\lim_{t\to0^+}|F_k(x,t)|$.
By both $f_k\in L^2(\rn)$ and \cite[Corollary 2.9]{d},
we obtain, for almost every $x\in\rn$,
\begin{align}\label{eq-h}
h_k(x)&=\lim_{t\to0^+}\left[\left|P_t\ast f_k(x)\right|^2
+\sum_{j=1}^n\left|P_t\ast R_j(f_k)(x)\right|^2\right]^{1/2}\\
&=\left[\left|f_k(x)\right|^2
+\sum_{j=1}^n\left|R_j(f_k)(x)\right|^2\right]^{1/2}. \notag
\end{align}
Using \cite[Lemma 2.9]{cwyz}, 	Assumption \ref{a}, and
$q\in[\frac{n-1}{n},\theta]$, we find that $M$ is bounded on $X^{1/q}$.
This, combined with Definition \ref{bfs}(ii), \eqref{eq-fk-hkp},
\eqref{eq-hp-mh}, \eqref{eq-h}, and \eqref{fk-re-no},
implies that, for any $k\in\nn$,
\begin{align}\label{eq-fk-bh}
\left\|F_k\right\|_{\bh_X(\rp)}
&=\sup_{t\in(0,\infty)}\left\|\left|F_k(\cdot,t)\right|^q\right\|_{X^{1/q}}^{1/q}
\leq \sup_{t\in(0,\infty)}\left\|P_t
\ast \left[h_k\right]^q\right\|_{X^{1/q}}^{1/q}\\
&\lesssim \left\|M\left(\left[h_k\right]^q
\right)\right\|_{X^{1/q}}^{1/q}\lesssim \left\|h_k\right\|_X\notag\\
&\lesssim \|f_k\|_X+\sum_{j=1}^n\|R_j(f_k)\|_X
\lesssim \|f\|_{H_{X,{\rm Riesz}}(\rn)}.\notag
\end{align}
Thus, $\{F_k\}_{k\in\nn}\subset \bh_X(\rp)\cap\bh^2(\rp)$.
From this and Proposition \ref{f-u}, we deduce that,
for any $k\in\nn$, $P_t\ast f_k\in H_{X}(\rp)$ and
$$
\left\|P_t\ast f_k\right\|_{H_{X}(\rp)}\lesssim \left\|F_k\right\|_{\bh_X(\rp)}\lesssim \|f\|_{H_{X,{\rm Riesz}}(\rn)},
$$
which, together with Remark \ref{re-ptf}, Theorem \ref{thm-p}, \eqref{eq-max-p}, and both (i) and (ii) of Definition \ref{def-h-x},  further implies that,
for any $k\in\nn$, $f_k\in H_X(\rn)$
and
\begin{equation*}
\left\|f_k\right\|_{H_X(\rn)}\sim \|M(f_k;P)\|_{X}\sim \left\|P_t
\ast f_k\right\|_{H_{X}(\rp)}
\lesssim \|f\|_{H_{X,{\rm Riesz}}(\rn)}.
\end{equation*}
Using this, \cite[Theorem 3.1]{shyy}, \eqref{3.24x},
and Lemma \ref{fatou}, we obtain
\begin{align*}
\|f\|_{H_X(\rn)}&\lesssim \|M(f;\Phi)\|_X
\sim \left\|\sup_{t\in(0,\infty)}|f\ast\Phi_t|\right\|_X
\sim \left\|\sup_{t\in(0,\infty)}\left[\lim_{k\to\infty}\left|f_k
\ast\Phi_t\right|\right]\right\|_X\\
&\lesssim \varliminf_{k\to\infty}\left\|M\left(f_k;\Phi\right)\right\|_X
\lesssim \varliminf_{k\to\infty}\left\|f_k\right\|_{H_X(\rn)}
\lesssim \|f\|_{H_{X,{\rm Riesz}}(\rn)}.
\end{align*}
This finishes the proof of (ii).

Finally, we prove (iii). To achieve this, by (ii), it suffices to show that
\begin{equation}\label{eq-h-hr}
H_X(\rn)\subset H_{X,{\rm Riesz}}(\rn).
\end{equation}
To this end, fix an $f\in H_X(\rn)$.
By Lemma \ref{den-h},
we can find a sequence
$\{f_k\}_{k\in\nn}\subset H_X(\rn)\cap L^2(\rn)$
such that $\lim_{k\to\infty}f_k=f$
in both $H_X(\rn)$ and $\cs'(\rn)$ and
$\|f_k\|_{H_X(\rn)}\lesssim\|f\|_{H_X(\rn)}$.
Using this and \eqref{f-re-f},
we conclude that, for any $k\in\nn$,
\begin{align*}
\left\|f_k\right\|_X
+\sum_{j=1}^n\left\|R_j(f_k)\right\|_X
\lesssim \left\|f_k\right\|_{H_X(\rn)}\lesssim\|f\|_{H_X(\rn)},
\end{align*}
which implies that $f\in H_{X,{\rm Riesz}}(\rn)$ and
$\|f\|_{H_{X,{\rm Riesz}}(\rn)}\lesssim\|f\|_{H_X(\rn)}$.
This finishes the proof of \eqref{eq-h-hr} and hence Theorem \ref{thm-re}.
\end{proof}

\begin{remark}\label{re-sharp}
\begin{enumerate}
\item[{\rm(i)}] We point out that, when $X:=L^p(\rn)$
with $p\in(\frac{n-1}{n},1]$, Theorem \ref{thm-re} in this case was
obtained in \cite{fs} (see also \cite[p.\,123, Proposition 3]{s93}).
When $X:=L_w^1(\rn)$, Theorem \ref{thm-re} in this case was
obtained in \cite{w76}.
When $X$ is a variable Lebesgue space, Theorem \ref{thm-re} in this case was
obtained in \cite[Theorem 1.5]{yzn}.
When $X$ is a Musielak--Orlicz space, Theorem \ref{thm-re} in this case was
obtained in \cite[Theorem 1.5]{ccyy16}.
To the best of our knowledge, when $X$ is a Lorentz
space, a mixed-norm Lebesgue space,
a local generalized Herz space, or a mixed-norm Herz space,
both the results obtained in Theorem \ref{thm-re} are new.
When $X$ is a Morrey space, the results obtained
in (i) and (ii) of Theorem \ref{thm-re} are new.
Observe that the proof of Theorem \ref{thm-re}(iii) strongly
depends on the density of $H_X(\rn)\cap L^2(\rn)$
in $H_X(\rn)$. Therefore, it is unclear whether or not
Theorem \ref{thm-re}(iii) still holds true when $X$ is a Morrey space.
\item[{\rm(ii)}]
We also point out that the range of $\theta\in[\frac{n-1}{n},s)$
in Theorem~\ref{thm-re} is the best possible in the sense
that, for any $\theta\in(0,\frac{n-1}{n})$,
Theorem~\ref{thm-re} does not hold true anymore.
Indeed, let $X:=L^p(\rn)$ satisfy both Assumptions \ref{a} and
\ref{a2} with both $\theta\in(0,\frac{n-1}{n})$ and $s\in(\theta,1]$.
By \cite[Remark 2.4(a)]{wyy20}, we find that $p\in (0,\theta)
\subset (0,\frac{n-1}{n})$. However, it is known that, for any $p\in(0,\frac{n-1}{n}]$,
$H^p(\rn)$ can no longer be characterized by the
first order Riesz transforms but can be characterized by the higher order Riesz
transforms (see, for instance, \cite[p. 168]{fs} for more details).
This implies that Theorem~\ref{thm-re} does not hold true in this case.
Thus, the range of $\theta\in[\frac{n-1}{n},s)$
in Theorem~\ref{thm-re} is the best possible.
\item[{\rm(iii)}] It is worth pointing that, when $X:=L^p(\rn)$,
the range of index $p$ plays a vital role in considering
the first order Riesz transform characterization
of the Hardy space $H^p(\rn)$. That is because, for a harmonic function
$u$, only when $p\in (\frac{n-1}{n},\infty)$, $|u|^p$ is
subharmonic. This fact also results in that we have to find
an appropriate range of $q\in [\frac{n-1}{n},\infty)$ in
the proofs of Lemma \ref{f-fp}.
Notice that, when we establish the first order Riesz transform characterization
of $H_X(\rn)$ associated with a BQBF space $X$,
an essential difficulty is that the quasi-norm of the space $X$
has no explicit expression.
Moreover, a key tool used in the proof of Proposition \ref{f-u}, which
 strongly depends on Lemma \ref{f-fp},
is the boundedness of Hardy--Littlewood maximal
function on a convexification of $X$ [see \eqref{3.15x} above],
which follows from Assumption \ref{a}.
Notice that,
if $f_j\equiv 0$ for any $j\in\nn\cap[2,\infty)$ in \eqref{ma},
then \eqref{ma} becomes
$$
\|M(f)\|_{X^{1/\theta}}\lesssim \|f\|_{X^{1/\theta}},
$$
which implies that $s$ plays no role in this case.
Observe that, if $X:=L^p(\rn)$, then, in this case, $M$ is
bounded on $X^{1/\theta}$ if and only if  $p\in(\theta,\infty]$
and hence $\theta$ is the \emph{critical index} of $p$.
Thus, in some sense, $\theta$ can play the part of $p$
when $p$ is not available, namely, when the quasi-norm of $X$ under consideration
has no explicit expression. The conclusions of Theorem \ref{thm-re}
confirms this  observation.
\end{enumerate}
\end{remark}

\section{Higher Order Riesz Transform Characterization}\label{s4}

In this section, we establish the higher order Riesz
transform characterization of $H_X(\rn)$.
Let us begin with recalling the concept of the tensor
product of $m$ copies of $\mathbb{R}^{n+1}$.

\begin{definition}
Let $m\in\nn$ and $\{e_0, e_1, \dots, e_n\}$ be
an orthonormal basis of $\mathbb{R}^{n+1}$. The \emph{tensor
product of $m$ copies of $\mathbb{R}^{n+1}$} is defined to be the set
$$
\bigotimes^m\mathbb{R}^{n+1}:=\left\{\xi:=\sum_{j_1,\dots,j_m=0}^n
\xi_{j_1,\dots,j_m}e_{j_1}\otimes\cdots\otimes e_{j_m}:\
\left\{\xi_{j_1,\dots,j_m}\right\}_{j_1,\dots,j_m=0}^n\subset\mathbb{C}\right\},
$$
where $e_{j_1}\otimes\cdots\otimes e_{j_m}$ denotes the
\emph{tensor product} of $e_{j_1},\dots,e_{j_m}$
and
$$
\sum_{j_1,\dots,j_m=0}^n:=\sum_{j_1=0}^n\cdots\sum_{j_m=0}^n.
$$
Each $\xi\in\displaystyle{\bigotimes^m\mathbb{R}^{n+1}}$ is called a \emph{tensor of rank $m$}.
\end{definition}

Let $F:\ \rp\to \displaystyle{\bigotimes^m\mathbb{R}^{n+1}}$
be a tensor-valued function of rank $m$
on $\mathbb{R}^{n+1}$, that is, it has
the form that, for any $(x,t)\in\rp$,
\begin{equation}\label{def-tf}
F(x,t):=\sum_{j_1,\dots,j_m=0}^n
F_{j_1,\dots,j_m}(x,t)e_{j_1}\otimes\cdots\otimes e_{j_m},
\end{equation}
where, for any $j_1,\dots,j_m\in\{0,\dots,n\}$,
$F_{j_1,\dots,j_m}$ is a function from $\rp$ to $\mathbb{C}$.
A tensor-valued function $F$ of rank $m$ is said to be
\emph{symmetric} if, for any permutation $\sigma$ on
$\{1,\dots,m\}$, any $j_1, \dots, j_m\in\{0,\dots,n\}$, and any
$(x,t)\in\rp$,
$$
F_{j_1,\dots,j_m}(x,t)=F_{j_{\sigma(1)},\dots,j_{\sigma(m)}}(x,t).
$$
A symmetric tensor-valued function $F$ of rank $m$
is said to be of \emph{trace zero} if,
for any $(x,t)\in\rp$,
\begin{equation*}
\left\{
\begin{array}{lc}
\displaystyle{\sum_{j=0}^nF_{j,j, j_3\dots,j_m}(x,t)=0, \ \forall\,j_3, \dots, j_m\in\{0,\dots,n\},}\ &\text{if}\ m\geq 3,\\
\displaystyle{\sum_{j=0}^nF_{j,j}(x,t)=0}&\text{if}\ m=2.
\end{array}
\right.
\end{equation*}
We make the convention that a tensor-valued function $F$
of rank 1 is \emph{always of trace zero}.
Let $F$ be the same as in \eqref{def-tf} and, for any
$j_1, \dots, j_m\in\{0,\dots,n\}$,
$F_{j_1,\dots,j_m}$ be differentiable.
Then the \emph{gradient} of $F$,
$$
\nabla F:\ \rp\to \bigotimes^{m+1}\mathbb{R}^{n+1},
$$
is a tensor-valued function of rank $m+1$ of the form that,
for any $(x,t)\in\rp$,
\begin{align*}
\nabla F(x,t)&=\sum_{j=0}^n\frac{\partial F}{\partial x_j}(x,t)\otimes e_j\\
&=\sum_{j=0}^n\sum_{j_1,\dots,j_m=0}^n\frac{\partial F_{j_1,\dots,j_m}}{\partial x_j}(x,t) e_{j_1}\otimes\cdots\otimes e_{j_m}\otimes e_j.
\end{align*}
Here and thereafter, we always let $x_0:=t$.
A tensor-valued function $F$ is said to satisfy the \emph{generalized
Cauchy--Riemann equation} if both $F$ and $\nabla F$
are symmetric and of trace zero. Obviously, if $m = 1$,
this definition of the generalized Cauchy--Riemann equation is
equivalent to that   in \eqref{eq-cr}. For more details on
the generalized Cauchy--Riemann equation on tensor-valued
functions, we refer the reader to \cite{ps08,sw68}.

\begin{remark}\label{u-g-cr}
Let $u$ be a harmonic function on $\rp$ and, for any $m\in\nn$,
\begin{equation}\label{eq-def-nu}
\nabla^m u:=\{\partial^\alpha u\}_{\alpha
\in\zz_+^{n+1},\,|\alpha|=m},
\end{equation}
where,
for any $\alpha:=(\alpha_0,\dots, \alpha_n)\in\zz_+^{n+1}$,
$|\alpha|:=\sum_{j=0}^n\alpha_j$
and
$$
\partial^\alpha:=\left(\frac{\partial}{\partial x_0}\right)^{\alpha_0}
\cdots \left(\frac{\partial}{\partial x_n}\right)^{\alpha_n}.
$$
By \cite[Remark 2.13]{yzn}, we find that $\nabla^m u$
satisfies the generalized Cauchy--Riemann equation.
\end{remark}

Now, we establish the higher  order Riesz transform
characterization of $H_X(\rn)$. We first introduce
the concept of higher order Riesz--Hardy spaces.

\begin{definition}\label{def-h-rhx}
Let $m\in\nn$ and $X$ be a BQBF space. The \emph{m-th
Riesz--Hardy space $H_{X,{\rm Riesz}}^m(\rn)$
associated $X$} is defined to
be the set of all the $f\in\cs'(\rn)$
satisfying that there exists a positive constant $A$ and
a sequence $\{f_k\}_{k\in\nn}\subset L^2(\rn)$
such that
$$\lim_{k\to\infty}f_k=f$$
in $\cs'(\rn)$ and,
for any $k\in\nn$, $l\in\{1,\dots,m\}$,
and $j_1,\dots,j_l\in \{1,\dots,n\}$,
$$
f_k\in X,\ R_{j_1}\dots R_{j_l}(f_k)\in X,
$$
and
\begin{equation}\label{eq-h-rh-norm}
\left\|f_k\right\|_X+\sum_{l=1}^m
\sum_{j_1,\dots,j_l=1}^n\left\|R_{j_1}\cdots R_{j_l}(f_k)\right\|_X\leq A.
\end{equation}
Moreover, for any $f\in H_{X,{\rm Riesz}}^m(\rn)$,
$$
\|f\|_{H_{X,{\rm Riesz}}^m(\rn)}:=\inf\{A:\ A\ \text{satisfies \eqref{eq-h-rh-norm}}\}.
$$
\end{definition}

Obviously, when $m=1$, then $H_{X,{\rm Riesz}}^1(\rn)$
coincides with $H_{X,{\rm Riesz}}(\rn)$ in Definition \ref{def-rhx}.
The following theorem establishes the higher Riesz
transform characterization of $H_X(\rn)$.

\begin{theorem}\label{thm-h-re}
Let $m\in\nn\cap[2,\infty)$.
Assume that $X$ is a BQBF space and satisfies both Assumptions~\ref{a} and
\ref{a2} with $s\in(\frac{n-1}{n+m-1},1]$ and
$\theta\in[\frac{n-1}{n+m-1},s)$.
Then the following statements hold true.
\begin{enumerate}
\item[{\rm(i)}] There exists a positive constant $C\in[0,\infty)$
such that, for any $f\in L^2(\rn)$,
$$
 \|f\|_{H_{X,{\rm Riesz}}^m(\rn)}\leq C \|f\|_{H_X(\rn)}.
$$
\item[{\rm(ii)}] There exists a positive constant $C\in[0,\infty)$
such that, for any $f\in \cs'(\rn)$,
$$
\|f\|_{H_X(\rn)}\leq C  \|f\|_{H_{X,{\rm Riesz}}^m(\rn)}.
$$
\item[{\rm(iii)}] Moreover, if $X$ has an absolutely continuous quasi-norm,
then
$$
H_X(\rn)=H_{X,{\rm Riesz}}^m(\rn)
$$
with equivalent (quasi-)norms.
\end{enumerate}
\end{theorem}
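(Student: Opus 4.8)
The plan is to run the same three-part scheme as in the proof of Theorem \ref{thm-re}, replacing the harmonic-vector space $\bh_X(\rp)$ by its rank-$m$ tensor-valued analogue $\bh_X^m(\rp)$ and the subharmonicity exponent $\frac{n-1}{n}$ by the Stein--Weiss exponent $\frac{n-1}{n+m-1}$. The central object is the following tensor. For $g\in L^2(\rn)$, let $\Psi_g$ be the harmonic function on $\rp$ whose space-Fourier transform is $(-2\pi|\xi|)^{-m}e^{-2\pi t|\xi|}\widehat g(\xi)$, so that $\partial_t^m\Psi_g=P_t\ast g$, and set $F:=\nabla^m\Psi_g$ [recall \eqref{eq-def-nu}]. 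By Remark \ref{u-g-cr}, $F$ is a symmetric, trace-zero, rank-$m$ tensor satisfying the generalized Cauchy--Riemann equation, and a direct Fourier computation shows that its component with exactly $l$ spatial indices $j_1,\dots,j_l\in\{1,\dots,n\}$ (and $m-l$ temporal ones) equals $P_t\ast R_{j_1}\cdots R_{j_l}(g)$, the purely temporal component being $P_t\ast g$. Hence $|F(\cdot,t)|$ is comparable to $\sum_{l=0}^m\sum_{j_1,\dots,j_l=1}^n|P_t\ast R_{j_1}\cdots R_{j_l}(g)|$, which is precisely the quantity controlled by \eqref{eq-h-rh-norm}.

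For part (i), I would fix $f\in L^2(\rn)$ with $\|f\|_{H_X(\rn)}<\infty$; since each $R_{j_1}\cdots R_{j_l}(f)\in L^2(\rn)$, \cite[Corollary 2.9]{d} gives $f\le M(f;\Phi)$ and $R_{j_1}\cdots R_{j_l}(f)\le M(R_{j_1}\cdots R_{j_l}(f);\Phi)$ for a positive radial decreasing $\Phi$ of integral one. Iterating Lemma \ref{bound-re} shows each composition $R_{j_1}\cdots R_{j_l}$ is bounded on $H_X(\rn)$, so \cite[Theorem 3.1(i)]{shyy} and Definition \ref{bfs}(ii) yield
\begin{equation*}
\|f\|_X+\sum_{l=1}^m\sum_{j_1,\dots,j_l=1}^n\|R_{j_1}\cdots R_{j_l}(f)\|_X\lesssim\|f\|_{H_X(\rn)},
\end{equation*}
which is (i); then (iii) follows exactly as in Theorem \ref{thm-re}(iii) by combining this estimate with the density of $H_X(\rn)\cap L^2(\rn)$ in $H_X(\rn)$ from Lemma \ref{den-h}.

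The heart is part (ii). Given $f$ with $\|f\|_{H_{X,{\rm Riesz}}^m(\rn)}<\infty$, I would choose $\{f_k\}_{k\in\nn}\subset L^2(\rn)$ with $f_k\to f$ in $\cs'(\rn)$ and the left-hand side of \eqref{eq-h-rh-norm} bounded by $A\lesssim\|f\|_{H_{X,{\rm Riesz}}^m(\rn)}$, and form $F_k:=\nabla^m\Psi_{f_k}\in\bh^2(\rp)$. Fixing $q\in[\frac{n-1}{n+m-1},\theta]$, the Stein--Weiss subharmonicity theorem for higher order systems satisfying the generalized Cauchy--Riemann equation (\cite{sw68}; see also \cite[p.\,234, Theorem 4.14]{sw71}) shows $|F_k|^q$ is subharmonic, whence the tensor analogue of Lemma \ref{f-fp}---proved verbatim, using Lemma \ref{emb} for the requisite integrability and \cite[Theorem 8]{n}---gives $|F_k(x,t)|^q\le P_t\ast(h_k)^q(x)$ with $h_k=\lim_{t\to0^+}|F_k(\cdot,t)|$. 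By the comparison above, the $L^2$ boundary identities for the Riesz transforms, and \cite[Corollary 2.9]{d}, one has $h_k\lesssim|f_k|+\sum_{l=1}^m\sum_{j_1,\dots,j_l=1}^n|R_{j_1}\cdots R_{j_l}(f_k)|$, so $\|h_k\|_X\lesssim A$. Since $q\le\theta$, \cite[Lemma 2.9]{cwyz} and Assumption \ref{a} give boundedness of $M$ on $X^{1/q}$; arguing as in \eqref{eq-fk-bh} then bounds $\|F_k\|_{\bh_X^m(\rp)}=\sup_{t}\||F_k(\cdot,t)|\|_X$ by $\|h_k\|_X\lesssim A$. The tensor analogue of Proposition \ref{f-u} (again verbatim, via $|u_{0,k}|^q\le|F_k|^q\le P_t\ast h_k^q$, the pointwise bound \eqref{eq-hp-mh}, and boundedness of $M$ on $X^{1/q}$) yields $u_{0,k}:=P_t\ast f_k\in H_X(\rp)$ with $\|u_{0,k}\|_{H_X(\rp)}\lesssim A$; by Theorem \ref{thm-p} and Definition \ref{def-h-x}, $f_k\in H_X(\rn)$ with $\|f_k\|_{H_X(\rn)}\lesssim A$. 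Finally, writing $M(f;\Phi)=\sup_t|\lim_k f_k\ast\Phi_t|$ and invoking \cite[Theorem 3.1]{shyy}, \eqref{3.24x}, and Lemma \ref{fatou} exactly as at the close of Theorem \ref{thm-re}(ii) gives $\|f\|_{H_X(\rn)}\lesssim A\lesssim\|f\|_{H_{X,{\rm Riesz}}^m(\rn)}$, which is (ii).

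The hard part will be the single place where $m$ genuinely enters, namely the subharmonicity of $|F_k|^q$ down to $q=\frac{n-1}{n+m-1}$: for scalar harmonic functions this exponent is $\frac{n-1}{n}$ [as used in Lemma \ref{f-fp}], and only the deeper Stein--Weiss estimate for rank-$m$ symmetric trace-zero systems lowers it to $\frac{n-1}{n+m-1}$, which is exactly what makes the smaller range $\theta\in[\frac{n-1}{n+m-1},s)$ admissible. Everything else---the embedding into $L^s_w(\rn)$, the maximal-function bounds on $X^{1/q}$, and the Fatou and density passages---transfers unchanged from Section \ref{s3}.
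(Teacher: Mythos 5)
Your proposal is correct and follows essentially the same route as the paper: part (i) via iterated boundedness of the Riesz transforms on $H_X(\rn)$ together with \cite[Theorem 3.1(i)]{shyy}, part (ii) via the rank-$m$ tensor with components $P_t\ast(R_{j_1}\cdots R_{j_m}(f_k))$ ($R_0:=I$), the subharmonicity of $|F_k|^q$ for $q\in[\frac{n-1}{n+m-1},\theta]$, and the higher-order analogues of Lemma \ref{f-fp} and Proposition \ref{f-u} exactly as indicated in Remark \ref{re-4.6x}, and part (iii) by density. The only cosmetic difference is that you realize the tensor as $\nabla^m\Psi_{f_k}$ through a Fourier multiplier (which is slightly delicate near $\xi=\mathbf{0}$ for general $f_k\in L^2(\rn)$, though harmless since only $\nabla^m\Psi_{f_k}$ is used and Lemma \ref{lem-f-u} supplies the potential locally), whereas the paper defines the components directly and cites \cite[Lemmas 17.1 and 17.2]{u01} for the generalized Cauchy--Riemann property and the harmonic majorization.
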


To prove Theorem \ref{thm-h-re}, we need two lemmas.
The following lemma is just \cite[Theorem 1]{cz64}.

\begin{lemma}\label{lem-g-f}
Let $m\in\nn$ and $u$ be a harmonic function on $\rp$.
Then, for any $q\in[\frac{n-1}{n+m-1},\infty)$, $|\nabla^m u|^q$
is subharmonic, where $\nabla^m u$ is the same as in \eqref{eq-def-nu}
and
$$
|\nabla^m u|:=\left[\sum_{\alpha\in\zz_+^{n+1},
|\alpha|=m} |\partial^\alpha u|^2\right]^{1/2}.
$$
\end{lemma}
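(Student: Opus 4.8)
The plan is to realize $|\nabla^m u|$ as the Euclidean length of the vector $F$ whose entries are the components $\{\partial^\alpha u\}_{|\alpha|=m}$ of $\nabla^m u$, and to prove that $|F|^q$ satisfies the sub-mean-value inequality of Definition \ref{def-sub-h}, where we work with the full Laplacian $\Delta:=\sum_{k=0}^n\partial_{x_k}^2$ on $\rp$ (with $x_0:=t$). Since $u$ is harmonic, each entry $F_i=\partial^\alpha u$ obeys $\Delta F_i=\partial^\alpha(\Delta u)=0$, so $F$ is a vector of harmonic functions; moreover, by Remark \ref{u-g-cr}, both $\nabla^m u$ and $\nabla^{m+1}u$ are symmetric and of trace zero. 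To handle the zero set of $F$, I would first prove subharmonicity of the smooth regularizations $(|F|^2+\ez^2)^{q/2}$ for every $\ez\in(0,\infty)$, and then let $\ez\to0^+$: since $(|F|^2+\ez^2)^{q/2}\downarrow|F|^q$ as $\ez\to0^+$ and $|F|^q$ is continuous, a decreasing limit of subharmonic functions is subharmonic, which yields the claim.

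Set $p:=q/2$ and $g_\ez:=|F|^2+\ez^2$. Using $\Delta F_i=0$, one has $\Delta g_\ez=2\sum_i|\nabla F_i|^2$ and $\nabla g_\ez=2\sum_i F_i\nabla F_i$, whence a direct computation gives
$$\Delta(g_\ez^{p})=2p\,g_\ez^{p-2}\Big[g_\ez\sum_i|\nabla F_i|^2+2(p-1)\Big|\sum_i F_i\nabla F_i\Big|^2\Big].$$
For $q\ge2$ the bracket is nonnegative by Cauchy--Schwarz, so the only interesting range is $q\in[\frac{n-1}{n+m-1},2)$, where $2(p-1)<0$. There, nonnegativity of the bracket follows once I establish the pointwise algebraic inequality
$$\Big|\sum_i F_i\nabla F_i\Big|^2\le \gz\,|F|^2\sum_i|\nabla F_i|^2,\qquad \gz:=\frac{n+m-1}{n+2m-1},$$
because then the bracket is at least $g_\ez\sum_i|\nabla F_i|^2[1-2(1-p)\gz]$, which is $\ge0$ exactly when $p\ge1-\frac1{2\gz}$, i.e. when $q\ge\frac{n-1}{n+m-1}$.

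The heart of the proof is this last inequality with the sharp constant $\gz$. Writing $G:=\nabla F=\nabla^{m+1}u$ and $V_k:=\sum_i F_i\partial_k F_i$, I must show $|V|^2\le\gz\,|F|^2|G|^2$ for symmetric trace-free tensors $F$ and $G$ of ranks $m$ and $m+1$. The key structural point is that, at any fixed point, the homogeneous Taylor parts of a harmonic function of different degrees are independent harmonic polynomials, so $F=\nabla^m u$ and $G=\nabla^{m+1}u$ may be treated as \emph{arbitrary} symmetric trace-free tensors of ranks $m$ and $m+1$, subject only to $G$ being symmetric across all of its $m+1$ slots; it is precisely this coupling of the slices $W_k:=(G_{\cdot,k})$ that produces the gain over the trivial constant $1$. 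I would then exploit the $O(n+1)$-invariance of the quotient $|V|^2/(|F|^2|G|^2)$ to reduce to an axially symmetric (zonal) extremizer $G$, identify it with the zonal harmonic of degree $m+1$ and index $\frac{n-1}{2}$ (a Gegenbauer polynomial), and compute the extremal ratio explicitly, arriving at $\gz=\frac{n+m-1}{n+2m-1}$; the optimal $F$ is then forced to be proportional to the rank-$m$ slice $(G_{\cdot,0})$, which is itself symmetric trace-free, so that the Cauchy--Schwarz step against $F$ is tight.

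The main obstacle is exactly this sharp algebraic inequality: the naive Cauchy--Schwarz bound only yields $\gz=1$ (hence $q\ge1$), and extracting the improved constant $\frac{n+m-1}{n+2m-1}$ requires genuinely using \emph{both} the symmetry and the trace-zero conditions on $\nabla^{m+1}u$, which is the technical core carried out in \cite{cz64}. As consistency checks, the constant reproduces the classical values: for $m=1$ it gives $\gz=\frac{n}{n+1}$ and threshold $\frac{n-1}{n}$, matching the harmonic-vector case invoked in Lemma \ref{f-fp}; and for $n=1$ it gives $\gz=\frac12$ and threshold $0$, consistent with the subharmonicity of $|f^{(m)}|^q$ for every $q>0$ when $u=\mathrm{Re}\,f$ with $f$ holomorphic.
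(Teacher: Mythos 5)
Your outer framework is correct and is in fact the classical route: the paper offers no argument of its own for this lemma (it is quoted verbatim as \cite[Theorem 1]{cz64}), and what you have written is precisely the reduction that Calder\'on and Zygmund perform. The regularization $(|F|^2+\epsilon^2)^{q/2}$, the identity
$$
\Delta\left(g_\epsilon^{p}\right)=2p\,g_\epsilon^{p-2}\left[g_\epsilon\sum_i|\nabla F_i|^2+2(p-1)\left|\sum_i F_i\nabla F_i\right|^2\right],
$$
the passage to the decreasing limit as $\epsilon\to0^+$ (legitimate, since the sub-mean-value inequality survives monotone limits and $|F|^q$ is continuous), and the arithmetic identifying the threshold $q=2-1/\gamma$ with $\frac{n-1}{n+m-1}$ when $\gamma=\frac{n+m-1}{n+2m-1}$ are all correct, as are your consistency checks at $m=1$ and $n=1$.

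As a self-contained proof, however, the proposal has a genuine gap at exactly the decisive step: the sharp pointwise inequality $|\sum_i F_i\nabla F_i|^2\le\frac{n+m-1}{n+2m-1}\,|F|^2\sum_i|\nabla F_i|^2$ is not proved but only accompanied by a plan (rotation invariance, reduction to zonal extremizers, Gegenbauer polynomials) whose execution is itself the entire technical content of \cite{cz64}; everything preceding it yields only the trivial constant $\gamma=1$ and hence the useless range $q\ge1$. Two further points need attention if you carry the plan out. First, there is a norm-convention mismatch: you take $F$ to be the vector of components $\partial^\alpha u$ indexed by multi-indices (matching the lemma's definition of $|\nabla^m u|$), but the symmetric trace-free tensor framework in which the constant $\frac{n+m-1}{n+2m-1}$ is established uses the full tensor norm $\sum_{j_1,\dots,j_m}|\partial_{j_1}\cdots\partial_{j_m}u|^2=\sum_{|\alpha|=m}\frac{m!}{\alpha!}|\partial^\alpha u|^2$; for $m\ge2$ these differ by nonconstant weights, and since subharmonicity is not preserved under passage to an equivalent norm, the whole computation (and the sharp constant) must be run in one consistent convention. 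Second, the assertion that $F=\nabla^m u$ and $G=\nabla^{m+1}u$ at a fixed point may be treated as independent, otherwise arbitrary, symmetric trace-free tensors is correct but should be justified (prescribe the degree-$m$ and degree-$(m+1)$ homogeneous harmonic Taylor components independently), since the variational reduction to zonal extremizers depends on it.
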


The following lemma is just
\cite[Theorem 14.3]{u01} (see also \cite{sw71}).

\begin{lemma}\label{lem-f-u}
Let $m\in\nn\cap[2,\infty)$, and $F$ be a tensor-valued function
of rank $m$ satisfying that both $F$ and $\nabla F$
are symmetric and that $F$ is of trace zero.
Then there exists a harmonic function $u$ on $\rp$
such that $\nabla^m u=F$, namely, for any
$j_1,\dots,j_m\in\{0,1,\dots,n\}$ and $(x,t)\in\rp$,
$$
\frac{\partial}{\partial x_{j_1}}\cdots \frac{\partial}{\partial x_{j_m}} u(x,t)
=F_{j_1,\dots, j_m}(x,t).
$$
\end{lemma}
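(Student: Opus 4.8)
First I would observe that the hypotheses split naturally: the symmetry of $F$ and of $\nabla F$ will be used to build a scalar potential $u$ with $\nabla^m u=F$, while the trace-zero condition will be used only afterwards to adjust this $u$ by a polynomial so that it becomes harmonic. Throughout I would use that $\rp=\rn\times(0,\infty)$ is convex, which is what guarantees that the potentials I construct exist globally on $\rp$.

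To construct some scalar $u$ with $\nabla^m u=F$, the plan is to prove a Poincar\'e-type lemma for symmetric tensor fields by induction on the rank $m$, using only that $F$ and $\nabla F$ are symmetric. For fixed free indices $j_1,\dots,j_{m-1}$, the vector field $k\mapsto F_{j_1,\dots,j_{m-1},k}$ is closed, since the symmetry of $\nabla F$ gives $\partial_i F_{j_1,\dots,j_{m-1},k}=\partial_k F_{j_1,\dots,j_{m-1},i}$; hence, on the convex domain $\rp$, the explicit homotopy potential
\[
G_{j_1,\dots,j_{m-1}}(y):=\int_0^1\sum_{k=0}^n F_{j_1,\dots,j_{m-1},k}\bigl(p+\tau(y-p)\bigr)(y_k-p_k)\,d\tau,
\]
with $p\in\rp$ a fixed base point and $y:=(x_0,\dots,x_n)$, $x_0:=t$, satisfies $\partial_k G_{j_1,\dots,j_{m-1}}=F_{j_1,\dots,j_{m-1},k}$. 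Since $F$ is symmetric, the free indices $j_1,\dots,j_{m-1}$ may be permuted inside $F$, so $G$ is a \emph{symmetric} tensor of rank $m-1$ with $\nabla G=F$; in particular $\nabla G$ is symmetric. Thus $G$ satisfies the same two structural hypotheses as $F$, and I would apply the induction hypothesis to obtain a scalar $u$ with $\nabla^{m-1}u=G$, whence $\nabla^m u=\nabla G=F$. The base case $m=1$ is the classical Poincar\'e lemma for the curl-free field $F$ on the convex set $\rp$. Because $\nabla^m u=F$ determines $u$ only up to a polynomial of degree at most $m-1$, I would retain this freedom for the final step.

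It remains to make $u$ harmonic. Writing $v:=\bdz u$ and using $\nabla^m u=F$ together with the symmetry of $F$, the trace-zero condition $\sum_{j=0}^n F_{j,j,j_3,\dots,j_m}=0$ becomes, for all $j_3,\dots,j_m\in\{0,\dots,n\}$,
\[
\partial_{j_3}\cdots\partial_{j_m}v=\sum_{j=0}^n\partial_j^2\bigl(\partial_{j_3}\cdots\partial_{j_m}u\bigr)=\sum_{j=0}^n F_{j,j,j_3,\dots,j_m}=0.
\]
Hence every derivative of $v$ of order $m-2$ vanishes, so $v=\bdz u$ is a polynomial of degree at most $m-3$ (for $m=2$ this reads $\bdz u=0$ directly). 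Since the Laplacian maps polynomials of degree at most $m-1$ onto polynomials of degree at most $m-3$, I can choose a polynomial $Q$ of degree at most $m-1$ with $\bdz Q=v$. Replacing $u$ by $u-Q$ leaves $\nabla^m u=F$ unchanged, because $\nabla^m Q=0$, and now $\bdz(u-Q)=0$; thus $u-Q$ is the desired harmonic function, and being harmonic it is automatically smooth, consistent with $F=\nabla^m u$.

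The main obstacle is the consistent, simultaneous choice of potentials at the inductive step: one must integrate a whole family of closed $1$-forms, indexed by $j_1,\dots,j_{m-1}$, and guarantee both that the resulting $G$ is symmetric and that $\nabla G=F$ holds in \emph{every} index slot, not merely the contracted one. The homotopy formula handles both issues at once — symmetry of $G$ is inherited verbatim from symmetry of $F$, and $\nabla G=F$ in any slot follows from the full symmetry of $F$ — but verifying these compatibilities carefully is the heart of the argument. The secondary point requiring care is the surjectivity of $\bdz$ on the relevant spaces of polynomials, which I would record as a standard fact following from the decomposition of polynomials into $|x|^{2k}$ times solid harmonic polynomials.
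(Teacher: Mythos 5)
Your proof is correct, but note that the paper itself does not prove this lemma: it is quoted directly from \cite[Theorem 14.3]{u01} (with a pointer to \cite{sw71}), so your proposal supplies a self-contained argument where the paper relies on a citation. Your route is the natural one and all the delicate points are handled properly: the closedness of each slice $k\mapsto F_{j_1,\dots,j_{m-1},k}$ is exactly the symmetry of $\nabla F$ in its last two slots; the homotopy potential on the convex set $\rp$ then gives $\partial_k G_{j_1,\dots,j_{m-1}}=F_{j_1,\dots,j_{m-1},k}$, which, with the paper's convention that the derivative index of $\nabla G$ occupies the \emph{last} slot, says $\nabla G=F$ outright (so symmetry of $F$ is needed only to make $G$ symmetric, not to identify $\nabla G$ with $F$); and the induction closes because $G$ and $\nabla G=F$ satisfy the same two structural hypotheses one rank lower. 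Your harmonization step is also right, and it makes visible a feature hidden in the citation: only trace-freeness of $F$ (not of $\nabla F$) is used, since $\sum_{j=0}^{n}F_{j,j,j_3,\dots,j_m}=0$ forces $\partial^{\beta}(\Delta u)=0$ for every $|\beta|=m-2$, whence $\Delta u$ is a polynomial of degree at most $m-3$ on the connected set $\rp$ (zero when $m=2$), and the classical surjectivity of $\Delta\colon\mathcal{P}_{m-1}\to\mathcal{P}_{m-3}$, via the decomposition of polynomials into $|x|^{2k}$ times solid harmonics, yields a corrector $Q$ with $\nabla^m Q=0$. Two minor points worth recording in a write-up: the homotopy formula requires $F\in C^1$ (implicit in the hypothesis that $\nabla F$ exists and is symmetric, and automatic in the paper's application, where the components of $F$ are Poisson integrals and hence smooth), and each descent step gains one order of smoothness, so the final $u$ lies in $C^{m+1}$, which justifies the commutation of derivatives in the trace computation. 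Compared with the paper's approach, the citation keeps the exposition short, while your argument is elementary, uses nothing beyond convexity of the half-space and basic facts about polynomials, and matches the lemma's hypotheses exactly.
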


Next, we introduce the concept of the Hardy space
$\bh_{X}^m(\rp)$ of tensor-valued functions
of rank $m$ associated with $X$.

\begin{definition}\label{def-hm-tv}
Let $X$ be a BQBF space. The \emph{Hardy space $\bh_{X}^m(\rp)$
of tensor-valued functions of rank $m$ associated
with $X$} is defined to be the
set of all the tensor-valued functions $F$ of rank $m$ on $\rp$
satisfying the generalized Cauchy--Riemann equation and
$$
\|F\|_{\bh_{X}^m(\rp)}:=\sup_{t\in(0,\infty)}\|\,|F(\cdot,t)|\,\|_X<\infty,
$$
where, for any $(x,t)\in\rp$,
$$
|F(x,t)|:=\left\{\sum_{j_1,\dots,j_m=0}^n
|F_{j_1,\dots,j_m}(x,t)|^2\right\}^{1/2}.
$$
\end{definition}

\begin{remark}\label{re-4.6x}
We point out that, in both Lemma \ref{f-fp} and Proposition \ref{f-u},
we used the restriction that $\theta \in [\frac{n-1}{n},s)$
only because, for any $q\in[\frac{n-1}{n},\infty)$,
the $q$-power of the absolute value of the first order
gradient, $|\nabla u|^q$, of a harmonic function $u$ on
$\rp$ is subharmonic. By Lemma \ref{lem-g-f}, we know that, for any
$m\in\nn$ and $q\in[\frac{n-1}{n+m-1},\infty)$,
$|\nabla^m u|^q$ is subharmonic on $\rp$. Therefore, the restriction
$\theta \in [\frac{n-1}{n},s)$ in both Lemma \ref{f-fp} and Proposition \ref{f-u}
can be relaxed to
$\theta \in [\frac{n-1}{n+m-1},s)$ when we deal with the Hardy
space $\bh_{X}^m(\rp)$
of tensor-valued functions of rank $m$
instead of  the Hardy
space $\bh_{X}(\rp)$ of harmonic vectors.
Then we can show that both Lemma \ref{f-fp} and Proposition\ref{f-u}
still hold true for any $q\in[\frac{n-1}{n+m-1},\theta]$; we omit the details.
Moreover, for any BQBF space $X$ satisfying both Assumptions
\ref{a} and \ref{a2} with $s\in(0,1]$ and $\theta\in(0,s)$,
we can always find a sufficiently large $m$ such that
$\theta \in [\frac{n-1}{n+m-1},s)$.
\end{remark}

Now, we show Theorem \ref{thm-h-re}.

\begin{proof}[Proof of Theorem \ref{thm-h-re}]
We first prove (i). To this end, fix an $f\in L^2(\rn)$.
If $\|f\|_{H_X(\rn)}=\infty$, then (i) obviously
holds true. In what follows, we assume that
$\|f\|_{H_X(\rn)}<\infty$.
Choose a $\Phi\in\cs(\rn)$ to be positive and  radial
decreasing such that $\int_{\rn}\Phi(x)\,dx=1$.
By this, \eqref{eq-f-mf}, Definition \ref{bfs}(ii), \cite[Theorem
3.1(i)]{shyy}, and Lemma \ref{bound-re}, we conclude that
\begin{align}\label{4.4x}
&\|f\|_X+\sum_{k=1}^m\sum_{j_1,\dots,j_k=1}^n\|R_{j_1}\dots R_{j_k}(f)\|_X\\
&\quad\lesssim \|M(f;\Phi)\|_X+\sum_{k=1}^m\sum_{j_1,\dots,j_k=1}^n\|M(R_{j_1}\dots R_{j_k}(f);\Phi)\|_X\notag\\
&\quad\lesssim \|f\|_{H_X(\rn)}+\sum_{k=1}^m\sum_{j_1,\dots,j_k=1}^n\|R_{j_1}\dots R_{j_k}(f)\|_{H_X(\rn)}\lesssim \|f\|_{H_X(\rn)}\notag.
\end{align}
This finishes the proof of (i).

Next, we show (ii). To achieve this, fix an $f\in\cs'(\rn)$.
If $\|f\|_{H_{X,{\rm Riesz}}^m(\rn)}=\infty$, then (ii) obviously
holds true.
In what follows, we assume that $\|f\|_{H_{X,{\rm Riesz}}^m(\rn)}<\infty$.
By Definition \ref{def-h-rhx}, we find
a sequence $\{f_k\}_{k\in\nn}\subset L^2(\rn)$  such that
\begin{equation}\label{eq-fk-f-i}
\lim_{k\to\infty}f_k=f
\end{equation}
in $\cs'(\rn)$ and, for any $k\in\nn$,
\begin{equation}\label{f-rr}
\left\|f_k\right\|_X+\sum_{k=1}^m
\sum_{j_1,\dots,j_k=1}^n\left\|R_{j_1}\cdots R_{j_k}
(f_k)\right\|_X\lesssim \|f\|_{H_{X,{\rm Riesz}}^m(\rn)}.
\end{equation}
For any $k\in\nn$, $j_1,\dots,j_m\in \{0,\dots,n\}$, and $(x,t)\in\rp$, write
$$
F_{j_1,\dots,j_m,k}(x,t):= P_t\ast (R_{j_1}\cdots R_{j_m}(f_k))(x),
$$
where $R_0:=I$ is the identity operator, and let
\begin{equation*}
F_k(x,t):=\sum_{j_1,\dots,j_m=0}^n
F_{j_1,\dots,j_m,k}(x,t)e_{j_1}\otimes\cdots\otimes e_{j_m}.
\end{equation*}
From this and the proof of \cite[Lemma 17.1]{u01}, we deduce
that, for any $k\in\nn$,
$F_k$ satisfies the generalized Cauchy--Riemann equation.
Using this and \cite[Lemma 17.2]{u01}, we have,
for any $k\in\nn$, $q\in[\frac{n-1}{n+m-1},\theta]$, and
$(x,t)\in\rp$,
\begin{equation}\label{eq-f-fp-t}
|F_k(x,t)|^q\leq  P_t \ast |h_k|^q(x),
\end{equation}
where, for any $x\in\rn$,
$$
h_k(x):=\{R_{j_1}\cdots R_{j_m}(f_k)(x)\}_{j_1,\dots,j_m\in \{0,\dots,n\}}
$$
and
$$
|h_k(x)|:=\left\{\sum_{j_1,\dots,j_m=0}^n
|R_{j_1}\cdots R_{j_m}(f_k)(x)|^2\right\}^{1/2}.
$$
By this, Definition \ref{bfs}(ii), \eqref{eq-f-fp-t},
and \eqref{f-rr}, we conclude  that, for any $k\in\nn$,
\begin{align*}
\left\|F_k\right\|_{\bh_{X}^m(\rp)}
&=\sup_{t\in(0,\infty)}\left\||F_k(\cdot,t)|^q\right\|_{X^{1/q}}^{1/q}
\leq \sup_{t\in(0,\infty)}\left\|P_t \ast |h_k|^q\right\|_{X^{1/q}}^{1/q}\\
&\lesssim \left\|M(|h_k|^q)\right\|_{X^{1/q}}^{1/q}\lesssim \left\|\,|h_k|\,\right\|_X\\
&\lesssim \|f_k\|_X+\sum_{j_1,\dots,j_m=0}^n\|R_{j_1}\dots R_{j_m}(f_k)\|_X\lesssim \|f\|_{H_{X,{\rm Riesz}}(\rn)},
\end{align*}
which implies that $F_k\in\bh_{X}^m(\rp)$.
Combining this and Remark \ref{re-4.6x} (on the higher order counterparts
of both Lemma \ref{f-fp} and Proposition \ref{f-u}),
we obtain, for any $k\in\nn$ and $t\in(0,\infty)$,
$$
\left\|P_t\ast f_k\right\|_{H_{X}(\rp)}\lesssim \left\|F_k\right\|_{\bh_X^m(\rp)}\lesssim \|f\|_{H_{X,{\rm Riesz}}(\rn)}.
$$
Applying this
and an argument similar to that used in the proof
of Theorem \ref{thm-re}(ii), we find that $f\in H_X(\rn)$
and $\|f\|_{H_X(\rn)}\lesssim \|f\|_{H_{X,{\rm Riesz}}(\rn)}$.
This finishes the proof of (ii).

Finally, we show (iii). To this end, by (ii), it suffices to prove that
\begin{equation}\label{eq-h-hr-high}
H_X(\rn)\subset H_{X,{\rm Riesz}}^m(\rn).
\end{equation}
To this end, fix an $f\in H_X(\rn)$.
By Lemma \ref{den-h},
we can find a sequence
$\{f_k\}_{k\in\nn}\subset H_X(\rn)\cap L^2(\rn)$
such that $\lim_{k\to\infty}f_k=f$
in both $H_X(\rn)$ and $\cs'(\rn)$ and
$\|f_k\|_{H_X(\rn)}\lesssim\|f\|_{H_X(\rn)}$.
From this and \eqref{4.4x},
we deduce that, for any $k\in\nn$,
\begin{align*}
\left\|f_k\right\|_X
+\sum_{k=1}^m\sum_{j_1,\dots,j_k=1}^n\|R_{j_1}\dots R_{j_k}(f_k)\|_{X}
\lesssim \left\|f_k\right\|_{H_X(\rn)}\lesssim\|f\|_{H_X(\rn)},
\end{align*}
which implies that $f\in H_{X,{\rm Riesz}}(\rn)$ and
$\|f\|_{H_{X,{\rm Riesz}}(\rn)}\lesssim\|f\|_{H_X(\rn)}$.
This finishes the proof of \eqref{eq-h-hr-high} and hence Theorem \ref{thm-h-re}.
\end{proof}

\begin{remark}
We point out that, when $X:=L^p(\rn)$
with $p\in(\frac{n-1}{n},1]$, Theorem \ref{thm-h-re} in this case was
obtained \cite{fs} (see also \cite[p.\,133, Item 5.16]{s93}).
When $X$ is a variable Lebesgue space, Theorem \ref{thm-h-re} in this case was
obtained in \cite[Theorem 1.6]{yzn}.
When $X$ is a Musielak--Orlicz space, Theorem \ref{thm-h-re} in this case was
obtained in \cite[Theorem 1.7]{ccyy16}.
To the best of our knowledge, when $X$ is a   Lorentz
space, a mixed-norm Lebesgue space,
a local generalized Herz space, or a mixed-norm Herz space,
the results obtained in Theorem \ref{thm-h-re} are new.
Observe that the proof of Theorem \ref{thm-h-re} strongly
depends on the density of $H_X(\rn)\cap L^2(\rn)$
in $H_X(\rn)$. Thus, it is unclear whether or not
Theorem \ref{thm-h-re} still holds true when $X$ is a Morrey space.
\end{remark}

\section{Applications}\label{s5}

In this section, we apply our main results, Theorems \ref{thm-re}
and \ref{thm-h-re},  to
five concrete examples of ball quasi-Banach
function spaces, namely, Lorentz spaces (Subsection \ref{ls}),
mixed-norm Lebesgue spaces (Subsection
\ref{mnls}), local
generalized Herz spaces (Subsection
\ref{lghs}), mixed-norm Herz spaces
(Subsection \ref{mhs}), and Morrey spaces (Subsection \ref{ms}),
and give the Riesz transform characterizations of  Hardy type
spaces based on these ball quasi-Banach
function spaces.
These examples indicate both the practicality
and the operability of the main results of this article and
more applications to new function spaces are obviously possible.

\subsection{Lorentz Spaces}\label{ls}

In this section, we apply our main results to the
Lorentz space.
Let us begin with the following concept of Lorentz spaces.

\begin{definition}
Let $p\in(0,\fz)$ and $r\in(0,\fz]$. The \emph{Lorentz space} $L^{p,r}(\rn)$ is defined to be
the set of all the measurable functions $f$ on $\rn$ satisfying that, when $p,r\in(0,\fz)$,
$$\|f\|_{L^{p,r}(\rn)}:=
\left\{\int_0^\infty[t^{1/p}f^\ast(t)]^r\,\frac{dt}{t}\right\}^{\frac{1}{r}}<\fz$$
and, when $p\in(0,\fz)$ and $r=\fz$,
$$\|f\|_{L^{p,q}(\rn)}:=\sup_{t\in(0,\fz)}\lf\{t^{1/p}f^\ast(t)\r\}<\fz,$$
where $f^\ast$ denotes the \emph{decreasing rearrangement function} of $f$, which is defined by setting,
for any $t\in[0,\fz)$,
$$f^\ast(t):=\inf\{s\in(0,\fz):\ \mu_f(s)\le t\}$$
with $\mu_f(s):=|\{x\in\rn:\ |f(x)|>s\}|$.
\end{definition}

Applying both Theorems \ref{thm-re} and \ref{thm-h-re},
we have the following  Riesz
transform characterization of Lorentz--Hardy spaces.

\begin{theorem}\label{riesz-ls}
Let $m\in\nn$, $p\in(\frac{n-1}{n+m-1},\infty)$,
$r\in(0,\fz)$,  and $X:=L^{p,r}(\rn)$.
Then $H_X(\rn)=H_{X,{\rm Riesz}}^m(\rn)$
with equivalent (quasi-)norms.
\end{theorem}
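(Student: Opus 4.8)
The plan is to deduce Theorem \ref{riesz-ls} directly from the two main results, namely Theorem \ref{thm-re} for the case $m=1$ and Theorem \ref{thm-h-re} for the case $m\in\nn\cap[2,\fz)$, by checking that the Lorentz space $X:=L^{p,r}(\rn)$ fulfils all of their hypotheses. Concretely, I would verify three things: that $X$ is a BQBF space; that $X$ satisfies both Assumption \ref{a} and Assumption \ref{a2} with one common pair $(s,\theta)$ obeying $\frac{n-1}{n+m-1}\le\theta<s\le 1$; and that $X$ has an absolutely continuous quasi-norm. Once these are in hand, part (iii) of the relevant main theorem yields $H_X(\rn)=H_{X,{\rm Riesz}}^m(\rn)$ with equivalent quasi-norms. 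That $L^{p,r}(\rn)$ is a BQBF space is classical, and I would simply record it, since the decreasing-rearrangement quasi-norm plainly satisfies (i)--(iii) of Definition \ref{bfs} and (vii) of Definition \ref{bqbfs}.

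The heart of the matter is the parameter selection. Since $p>\frac{n-1}{n+m-1}$, I would first fix $\theta\in[\frac{n-1}{n+m-1},\min\{p,1\})$ and then choose $s\in(\theta,1]$; the hypothesis $p>\frac{n-1}{n+m-1}$ is exactly what guarantees that such a $\theta$, and hence such an $s$, exists. For Assumption \ref{a}, I would use the identity $(L^{p,r})^{1/\theta}=L^{p/\theta,\,r/\theta}$, which follows from $\||g|^{a}\|_{L^{p,r}}=\|g\|_{L^{ap,ar}}^{a}$, to rewrite \eqref{ma}: after the substitution $g_j:=|f_j|^\theta$ and $u:=s/\theta>1$, the inequality \eqref{ma} becomes equivalent to the vector-valued maximal inequality
$$
\left\|\left(\sum_j (Mg_j)^{u}\right)^{1/u}\right\|_{L^{p/\theta,\,r/\theta}}
\ls\left\|\left(\sum_j g_j^{u}\right)^{1/u}\right\|_{L^{p/\theta,\,r/\theta}},
$$
which holds because $p/\theta>1$ and $u>1$, the second index $r/\theta$ playing no obstructive role. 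For Assumption \ref{a2}, I would note $X^{1/s}=L^{p/s,\,r/s}$, choose $s$ so that this is a ball Banach function space (that is, $p/s>1$ together with normability of the second index), identify its associate space as a Lorentz space with first index $(p/s)'\in(1,\fz)$, and then deduce the boundedness of $M^{((q/s)')}$ on it, for a suitable $q\in(1,\fz]$, from the boundedness of the Hardy--Littlewood maximal operator on Lorentz spaces whose first index exceeds $1$.

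The remaining hypothesis, the absolute continuity of the quasi-norm of $L^{p,r}(\rn)$, is precisely where the restriction $r\in(0,\fz)$ (rather than $r=\fz$) enters: for $r<\fz$ the quasi-norm is absolutely continuous, whereas the weak Lorentz space $L^{p,\fz}$ is not. With all hypotheses verified, I would conclude by invoking Theorem \ref{thm-re}(iii) when $m=1$ and Theorem \ref{thm-h-re}(iii) when $m\ge 2$. The step I expect to be the main obstacle is the simultaneous fulfilment of Assumptions \ref{a} and \ref{a2} by a single pair $(s,\theta)$: one must reconcile the lower bound $\theta\ge\frac{n-1}{n+m-1}$ forced by the subharmonicity used in Lemma \ref{f-fp} and Proposition \ref{f-u} with the convexity and normability demands placed on $X^{1/s}=L^{p/s,\,r/s}$ by Assumption \ref{a2}, and it is the interplay of $s$ with the second Lorentz index $r$, together with the maximal boundedness on the associate space, that will require the most care.
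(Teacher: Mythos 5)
Your overall strategy coincides exactly with the paper's: both reduce Theorem \ref{riesz-ls} to checking that $L^{p,r}(\rn)$ is a BQBF space with absolutely continuous quasi-norm satisfying Assumptions \ref{a} and \ref{a2} for a common pair $(s,\theta)$ with $\frac{n-1}{n+m-1}\le\theta<s\le1$, and then invoking Theorem \ref{thm-re}(iii) when $m=1$ and Theorem \ref{thm-h-re}(iii) when $m\ge2$. The only difference is how the hypotheses are discharged: the paper cites \cite[Theorem 2.3(iii)]{ccmp} for both assumptions, with $s\in(\frac{n-1}{n+m-1},\min\{1,p\}]$, $\theta\in[\frac{n-1}{n+m-1},s)$, and $q\in(\max\{1,p,r\},\infty)$, and \cite[Remark 3.4(iii)]{wyy20} for the absolute continuity, whereas you propose to verify everything directly. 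Your verification of Assumption \ref{a} via the dilation identity $(L^{p,r})^{1/\theta}=L^{p/\theta,\,r/\theta}$ and the vector-valued maximal inequality on Lorentz spaces with first index exceeding $1$ is sound.

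There is, however, a genuine gap at the step you yourself single out as the main obstacle, and it is not merely a matter of ``care.'' Your route through Assumption \ref{a2} requires $X^{1/s}=L^{p/s,\,r/s}$ to be a ball Banach function space, and $L^{q,v}$ with $q\in(1,\infty)$ is normable only when $v\ge1$: for $v<1$ it is not even locally convex, as one sees by taking $f_k:=2^{-k/q}\mathbf{1}_{E_k}$ with the $E_k$ disjoint and $|E_1\cup\cdots\cup E_k|=2^k$, which gives $\|\sum_{k=1}^{N}f_k\|_{L^{q,v}}\sim N^{1/v}$ while $\sum_{k=1}^{N}\|f_k\|_{L^{q,v}}\sim N$. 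Hence your scheme needs $s\le r$ in addition to $s>\theta\ge\frac{n-1}{n+m-1}$, which is impossible whenever $r\le\frac{n-1}{n+m-1}$ --- a case the theorem permits, since $r$ ranges over all of $(0,\infty)$. Relatedly, your identification of $(X^{1/s})'$ as ``a Lorentz space with first index $(p/s)'$'' must distinguish $r/s\le1$ (where the associate space is $L^{(p/s)',\infty}$) from $r/s>1$ (where it is $L^{(p/s)',(r/s)'}$), and the exponent $q$ in Assumption \ref{a2} must be taken larger than $r$ as well as larger than $p$, as in the paper's choice. To close this step you would either have to restrict the range of $r$ or import the rearrangement-invariant extrapolation machinery of \cite{ccmp} that the paper relies on; as written, the proposal leaves precisely this point open.
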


\begin{proof}
By \cite[Theorem 2.3(iii)]{ccmp}, we conclude that
$L^{p,r}(\rn)$ satisfies both Assumptions
\ref{a} and \ref{a2} with $s\in(\frac{n-1}{n+m-1},\min\{1,p\}]$,
$\theta\in[\frac{n-1}{n+m-1},s)$,
and $q\in(\max\{1,p,r\},\infty)$. Moreover, from \cite[Remark 3.4(iii)]{wyy20},
we infer that $L^{p,r}(\rn)$ has an absolutely continuous quasi-norm.
Then, using both Theorems \ref{thm-re}(iii) and \ref{thm-h-re}(iii) with $X:=L^{p,r}(\rn)$,
we obtain the desired conclusion,
which completes the proof of
Theorem \ref{riesz-ls}.
\end{proof}

\subsection{Mixed-Norm Lebesgue Spaces}\label{mnls}

The mixed-norm Lebesgue space
$L^{\vec{p}}\left(\mathbb{R}^{n}\right)$
was studied by Benedek and Panzone
\cite{BAP1961} in 1961, which can be
traced back to H\"ormander \cite{HL1960}.
Later on, in 1970, Lizorkin \cite{LPI1970}
further developed both the theory of
multipliers of Fourier integrals and
estimates of convolutions in the
mixed-norm Lebesgue spaces. Particularly,
in order to meet the requirements arising
in the study of the boundedness of
operators, partial differential equations,
and some other analysis subjects, the
real-variable theory of mixed-norm
function spaces has rapidly been
developed in recent years (see, for
instance,
\cite{CGN2017,CGG2017,CGN20172,HLY2019,HLYY2019,HYacc}).

\begin{definition}\label{mixed}
Let $\vec{p}:=(p_{1}, \ldots, p_{n})
\in(0, \infty]^{n}$. The \emph{mixed-norm
Lebesgue space
$L^{\vec{p}}\left(\mathbb{R}^{n}\right)$}
is defined to be the set of all the measurable
functions $f$ on $\rn$ such that
\begin{align*}
\|f\|_{L^{\vec{p}}\left(\mathbb{R}^{n}\right)}:=\left\{\int_{\mathbb{R}}
\cdots\left[\int_{\mathbb{R}}\left|f\left(x_{1}, \ldots,
x_{n}\right)\right|^{p_{1}} \,d x_{1}\right]^{\frac{p_{2}}{p_{1}}}
\cdots \,dx_{n}\right\}^{\frac{1}{p_{n}}}<\infty
\end{align*}
with the usual modifications made when $p_i=\infty$ for some $i\in\{1,\ldots,n\}$.
Moreover, let
\begin{equation}\label{p-p+}
p_-:=\min\{p_{1}, \ldots, p_{n}\}\quad  	
\text{and}\quad p_+:=\max\{p_{1}, \ldots, p_{n}\}.
\end{equation}

\end{definition}

\begin{remark}\label{mix-r}
Let $\vec{p}:=(p_{1}, \ldots, p_{n})\in(0, \infty)^{n}$ and both
$p_-$ and $p_+$ be the same as in \eqref{p-p+}.
By Definition \ref{mixed}, we easily conclude that
$L^{\vec{p}}\left(\mathbb{R}^{n}\right)$
is a ball quasi-Banach space,
but it is worth pointing out that $L^{\vec{p}}(\rn)$
may not be a quasi-Banach function space
(see, for instance, \cite[Remark 7.20]{zyyw}).
\end{remark}

On the mixed-norm Hardy space, we have the following Riesz
transform characterization.

\begin{theorem}\label{riesz-mnls}
Let $m\in\nn$, $\vec{p}:=(p_{1}, \ldots, p_{n})\in(0, \infty)^{n}$,
$p_-$ be the same as in \eqref{p-p+}, and
$X:=L^{\vec{p}}\left(\mathbb{R}^{n}\right)$.
If $p_-\in(\frac{n-1}{n+m-1},\infty)$,
then $H_X(\rn)=H_{X,{\rm Riesz}}^m(\rn)$
with equivalent (quasi-)norms.
\end{theorem}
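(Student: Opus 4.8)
The plan is to follow the same strategy as the proof of Theorem \ref{riesz-ls} for Lorentz spaces: reduce the assertion to verifying the structural hypotheses of Theorems \ref{thm-re} and \ref{thm-h-re} for the choice $X:=L^{\vec{p}}(\rn)$, and then invoke those theorems directly. By Remark \ref{mix-r}, we already know that $L^{\vec{p}}(\rn)$ is a BQBF space, so the only items that remain to be checked are Assumption \ref{a}, Assumption \ref{a2}, and the absolute continuity of the quasi-norm of $L^{\vec{p}}(\rn)$.

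First I would fix the parameters. Since $p_-\in(\frac{n-1}{n+m-1},\infty)$, I can choose $s\in(\frac{n-1}{n+m-1},\min\{1,p_-\}]$ and then $\theta\in[\frac{n-1}{n+m-1},s)$; these are exactly the ranges demanded by Theorem \ref{thm-re} in the case $m=1$ (so that $\frac{n-1}{n+m-1}=\frac{n-1}{n}$) and by Theorem \ref{thm-h-re} in the case $m\geq 2$. To verify Assumption \ref{a}, namely the Fefferman--Stein vector-valued maximal inequality \eqref{ma} on $L^{\vec{p}}(\rn)$ with these $s$ and $\theta$, I would appeal to the boundedness of the Hardy--Littlewood maximal operator on mixed-norm Lebesgue spaces and to its vector-valued extension, both available in the mixed-norm literature cited in the introduction. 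The choice $\theta<p_-$ guarantees that the powered operator $M^{(\theta)}$ acts boundedly componentwise, while $s\le\min\{1,p_-\}$ makes the convexification $X^{1/s}$ a ball Banach function space, which is precisely the first requirement in Assumption \ref{a2}.

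Next, for the associate-space part of Assumption \ref{a2}, I would identify the K\"othe dual of $(L^{\vec{p}})^{1/s}$ and verify the maximal inequality \eqref{ma21} on it for a suitable $q\in(1,\fz]$. Since the associate space of a mixed-norm Lebesgue space is again a mixed-norm Lebesgue space, now with the conjugate exponent vector whose entries all exceed $1$, the required boundedness of $M^{((q/s)')}$ reduces once more to the boundedness of the Hardy--Littlewood maximal operator on such a space, which holds once $q$ is taken large enough. The absolute continuity of the quasi-norm of $L^{\vec{p}}(\rn)$ follows from the corresponding property of mixed-norm Lebesgue spaces recorded in the same literature (the analogue of \cite[Remark 3.4(iii)]{wyy20} for Lorentz spaces).

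With all hypotheses in place, I would finish by applying Theorem \ref{thm-re}(iii) when $m=1$ and Theorem \ref{thm-h-re}(iii) when $m\geq 2$, thereby obtaining $H_X(\rn)=H_{X,{\rm Riesz}}^m(\rn)$ with equivalent quasi-norms. The main obstacle is the bookkeeping inside Assumption \ref{a2}: one must simultaneously arrange that $X^{1/s}$ is a ball Banach function space (forcing $s\le\min\{1,p_-\}$) and that a powered maximal operator is bounded on its associate space (forcing $q$ large), while keeping $\theta$ at or above the subharmonicity threshold $\frac{n-1}{n+m-1}$ coming from Lemma \ref{lem-g-f}. The hypothesis $p_-\in(\frac{n-1}{n+m-1},\infty)$ is exactly what renders this system of constraints solvable, so the whole argument hinges on making these parameter choices compatible.
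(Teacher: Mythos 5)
Your proposal is correct and follows essentially the same route as the paper: choose $s\in(\frac{n-1}{n+m-1},\min\{p_-,1\}]$, $\theta\in[\frac{n-1}{n+m-1},s)$, and $q\in(\max\{p_+,1\},\infty)$, verify Assumptions \ref{a} and \ref{a2} via the vector-valued maximal inequality on $L^{\vec{p}}(\rn)$ and the Benedek--Panzone duality, check absolute continuity of the quasi-norm (the paper uses the dominated convergence theorem), and conclude by Theorems \ref{thm-re}(iii) and \ref{thm-h-re}(iii). The only difference is that the paper pins the maximal-function facts to specific citations (\cite[Lemmas 3.5 and 3.7]{HLY2019} and \cite[p.\,304, Theorem 1.a]{BAP1961}) where you appeal to the literature generically.
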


\begin{proof}
Choose an $s\in(\frac{n-1}{n+m-1},\min\{p_-,1\}]$,
a $\theta\in[\frac{n-1}{n+m-1},s)$,
and a $q\in(\max\{p_+,1\},\infty)$,
where $p_+$ is the same as in \eqref{p-p+}.
Then, by \cite[Lemma
3.7]{HLY2019}, the dual theorem of
$L^{\vec{p}}(\rn)$ (see \cite[p.\,304,
Theorem 1.a]{BAP1961}), and
\cite[Lemma 3.5]{HLY2019}, we conclude that
$L^{\vec{p}}(\rn)$ satisfies both Assumptions
\ref{a} and \ref{a2} with $s$,
$\theta$,
and $q$ chosen as above. Moreover, from the dominated convergence theorem,
we infer that $L^{\vec{p}}(\rn)$ has an absolutely continuous quasi-norm.
Then, using Theorems~\ref{thm-re}(iii) and \ref{thm-h-re}(iii)
with $X:=L^{\vec{p}}\left(\mathbb{R}^{n}\right)$,
we obtain the desired conclusion,
which completes the proof of
Theorem \ref{riesz-mnls}.
\end{proof}

\subsection{Local
Generalized Herz Spaces}\label{lghs}

In this section, we apply our main results to
the local generalized Herz
space (see, for instance, \cite{rs20,LYH2022}). Let us begin
with the concepts of the almost decreasing function (see, for instance,
\cite[p.\,30]{kmrs}) and the function
class $M\left(\mathbb{R}_{+}\right)$ (see, for instance,
\cite[Definition 2.1]{rs20}).

\begin{definition}
Let $\mathbb{R}_+:=(0,\infty)$.  A
nonnegative function $\omega$ on $\mathbb{R}_+$
is said to be \emph{almost decreasing} on $\mathbb{R}_+$
if there exists a constant $C\in[1,\infty)$ such that,
for any $t, \tau\in(0,\infty)$ satisfying $t \geq \tau$,
$$
\omega(t)\leq C\omega(\tau).
$$
\end{definition}

\begin{definition}
Let $\mathbb{R}_+:=(0,\infty)$. The \emph{function class}
$M\left(\mathbb{R}_{+}\right)$ is defined to
be the set of all the positive functions
$\omega$ on $\mathbb{R}_{+}$ such that, for
any $0<\delta<N<\infty$,
$$
0<\inf _{t \in(\delta, N)} \omega(t) \leq
\sup _{t \in(\delta, N)} \omega(t)<\infty
$$
and there exist four constants $\alpha_{0}$,
$\beta_{0}$, $\alpha_{\infty}$,
$\beta_{\infty} \in \mathbb{R}$ such that
\begin{itemize}
\item[\rm (i)] for any $t \in(0,1]$, $\omega(t)
t^{-\alpha_{0}}$ is almost increasing and
$\omega(t) t^{-\beta_{0}}$ is almost
decreasing;

\item[\rm (ii)] for any $t \in[1, \infty)$, $\omega(t)
t^{-\alpha_{\infty}}$ is almost increasing
and $\omega(t) t^{-\beta_{\infty}}$ is
almost decreasing.
\end{itemize}
\end{definition}

Now, we recall the concept of local generalized Herz spaces
introduced in \cite[Definition 2.1]{rs20}.

\begin{definition}
Let $p,r \in(0, \infty)$ and $\omega \in
M\left(\mathbb{R}_{+}\right)$.
The \emph{local generalized Herz space}
$\dot{\mathcal{K}}_{\omega, \mathbf{0}}^{p,
r}(\mathbb{R}^{n})$ is defined to
be the set of all the measurable functions
$f$ on $\mathbb{R}^{n}$ such that
$$
\|f\|_{\dot{\mathcal{K}}_{\omega, \mathbf{0}}^{p,
r}\left(\mathbb{R}^{n}\right)}:=\left\{\sum_{k \in
\mathbb{Z}}\left[\omega\left(2^{k}\right)\right]^{r}\left\|f
\mathbf{1}_{B\left(\mathbf{0}, 2^{k}\right) \setminus
B\left(\mathbf{0},
2^{k-1}\right)}\right\|_{L^{p}\left(\mathbb{R}^{n}\right)}
^{r}\right\}^{\frac{1}{r}}<\fz.
$$
\end{definition}

We also recall the following concept of Matuszewska--Orlicz
indices; see, for instance, \cite{rs20} and
\cite[Definition 1.1.4]{LYH2022}

\begin{definition}
Let $\omega$ be a positive
function on $\mathbb{R}_{+}$. Then the
\emph{Matuszewska--Orlicz indices} $m_{0}(\omega)$,
$M_{0}(\omega)$, $m_{\infty}(\omega)$, and
$M_{\infty}(\omega)$ of $\omega$ are
defined, respectively, by setting, for any
$h \in(0, \infty)$,
$$m_{0}(\omega):=\sup _{t \in(0,1)}
\frac{\ln (\varlimsup\limits_{h \to
0^{+}} \frac{\omega(h
t)}{\omega(h)})}{\ln t},\ M_{0}(\omega):=\inf _{t \in(0,1)}
\frac{\ln (\varliminf\limits_{h\to0^{+}}
\frac{\omega(h t)}{\omega(h)})}{\ln
t},$$
$$m_{\infty}(\omega):=\sup _{t \in(1,
\infty)} \frac{\ln (\varliminf
\limits_{h \to\infty}
\frac{\omega(h t)}{\omega(h)})}{\ln
t},$$
and
$$
M_{\infty}(\omega):=\inf _{t \in(1, \infty)}
\frac{\ln (\varlimsup\limits_{h
\to \infty} \frac{\omega(h
t)}{\omega(h)})}{\ln t}.
$$
\end{definition}

\begin{remark}
From \cite[Theorem 1.2.20]{LYH2022},
we infer that $\dot{\mathcal{K}}_{\omega, \mathbf{0}}^{p,r}(\mathbb{R}^{n})$ is a ball quasi-Banach function space with $p,r\in(0,\infty)$ and $\omega\in M(\mathbb{R}_+)$ satisfying
$m_0(\omega)\in(-\frac{n}{p},\infty)$.
However, it is worth pointing out that
$\dot{\mathcal{K}}_{\omega, \mathbf{0}}^{p,r}(\mathbb{R}^{n})$ with
$p,r \in(0, \infty)$ and $\omega \in
M\left(\mathbb{R}_{+}\right)$ may not be a quasi-Banach function space
(see, for instance, \cite[Remark 4.13]{cjy22}).
\end{remark}

On the local
generalized Herz--Hardy space, we have the following Riesz
transform characterization.

\begin{theorem}\label{thm-re-mh}
Let $m\in\nn$, $p\in(\frac{n-1}{n+m-1},\infty)$, $r\in(\frac{n-1}{n+m-1},\infty]$,
$\omega\in M(\mathbb{R}_+)$ satisfy
$m_0(\omega)\in(-\frac{n}{p},\infty)$
and $m_\infty(\omega)\in(-\frac{n}{p},\infty)$, and $X:=\dot{\mathcal{K}}_{\omega, \mathbf{0}}^{p,r}(\mathbb{R}^{n})$.
Assume that
$$
\frac{n}{\max
\left\{M_{0}(\omega),
M_{\infty}(\omega)\right\}+n /
p}\in \left(\frac{n-1}{n+m-1},\infty\right).
$$
Then $H_X(\rn)=H_{X,{\rm Riesz}}^m(\rn)$
with equivalent (quasi-)norms.
\end{theorem}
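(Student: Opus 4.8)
The plan is to argue exactly as in the proofs of Theorems \ref{riesz-ls} and \ref{riesz-mnls}. Writing $X:=\dot{\mathcal{K}}_{\omega, \mathbf{0}}^{p,r}(\rn)$, I would (a) confirm that $X$ is a BQBF space, (b) exhibit a triple $(s,\theta,q)$ for which $X$ satisfies both Assumptions \ref{a} and \ref{a2} in the ranges required by Theorems \ref{thm-re} and \ref{thm-h-re}, (c) check that $X$ has an absolutely continuous quasi-norm, and then (d) apply Theorems \ref{thm-re}(iii) and \ref{thm-h-re}(iii) verbatim. Step (a) is already available: by \cite[Theorem 1.2.20]{LYH2022}, the hypotheses $p,r\in(0,\infty)$ and $m_0(\omega)\in(-\frac{n}{p},\infty)$ make $X$ a BQBF space, while the extra assumption $m_\infty(\omega)\in(-\frac{n}{p},\infty)$ is reserved for the maximal estimates.

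For step (b), I would choose $\theta$ slightly above $\frac{n-1}{n+m-1}$ and then $s\in(\theta,1]$; the requirements $p,r\in(\frac{n-1}{n+m-1},\infty]$ are precisely what leave room to take $s>\frac{n-1}{n+m-1}$ while keeping $s\le\min\{1,p,r\}$, which (since the convexification $X^{1/s}$ is the generalized Herz space $\dot{\mathcal{K}}^{p/s,r/s}_{\omega^s,\mathbf{0}}(\rn)$) is exactly the condition making $X^{1/s}$ a ball Banach function space. The decisive quantity is the effective critical exponent $\frac{n}{\max\{M_0(\omega),M_\infty(\omega)\}+n/p}$, which reduces to $p$ when $\omega\equiv1$: since $X^{1/\theta}=\dot{\mathcal{K}}^{p/\theta,r/\theta}_{\omega^\theta,\mathbf{0}}(\rn)$ has upper Matuszewska--Orlicz indices $\theta M_0(\omega)$ and $\theta M_\infty(\omega)$, the boundedness criterion for $M$ on generalized Herz spaces from \cite{LYH2022} shows that $M$ is bounded on $X^{1/\theta}$ exactly when $\theta<\frac{n}{\max\{M_0(\omega),M_\infty(\omega)\}+n/p}$, the companion lower-index constraint being automatic from $m_0(\omega),m_\infty(\omega)>-\frac{n}{p}$. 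Hence the standing hypothesis that this exponent exceeds $\frac{n-1}{n+m-1}$ --- the exact analogue of the Lebesgue-space condition $p>\frac{n-1}{n+m-1}$ discussed in Remark \ref{re-sharp}(iii) --- is precisely what lets me choose $\theta\in[\frac{n-1}{n+m-1},s)$ for which the Fefferman--Stein inequality \eqref{ma} used in Proposition \ref{f-u} and Remark \ref{re-4.6x} holds.

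The main obstacle will be Assumption \ref{a2}, i.e. producing $q\in(1,\infty]$ with $M^{((q/s)')}$ bounded on $(X^{1/s})'$. This forces me to identify the associate space of the convexified Herz space $X^{1/s}$ --- itself a generalized Herz space with dual inner exponent and reciprocal weight, so that the roles of the upper and lower Matuszewska--Orlicz indices interchange --- and then to invoke the sharp range for the boundedness of a fractional maximal operator on it. It is here that the two lower-index hypotheses $m_0(\omega),m_\infty(\omega)\in(-\frac{n}{p},\infty)$ are used: after dualization they become upper-index conditions on the associate space and pin down an admissible $q$, so the lower indices feed \eqref{ma21} just as the upper indices feed \eqref{ma}. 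Finally, for step (c) the absolute continuity of the quasi-norm of $X$ should follow from a dominated-convergence argument for generalized Herz spaces, paralleling the corresponding step for Lorentz and mixed-norm spaces; granting these inputs, the conclusion $H_X(\rn)=H_{X,{\rm Riesz}}^m(\rn)$ with equivalent (quasi-)norms is immediate from Theorems \ref{thm-re}(iii) and \ref{thm-h-re}(iii).
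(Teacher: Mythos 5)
Your proposal is correct and follows essentially the same route as the paper: verify that $\dot{\mathcal{K}}_{\omega,\mathbf{0}}^{p,r}(\rn)$ is a BQBF space with absolutely continuous quasi-norm, check Assumptions \ref{a} and \ref{a2} with $s\in(\frac{n-1}{n+m-1},\min\{p,r,\frac{n}{\max\{M_0(\omega),M_\infty(\omega)\}+n/p}\}]$, $\theta\in[\frac{n-1}{n+m-1},s)$, and $q\in(\max\{1,p,\frac{n}{\min\{m_0(\omega),m_\infty(\omega)\}+n/p}\},\infty)$ by citing the known maximal-function results for local generalized Herz spaces in \cite{LYH2022}, and then apply Theorems \ref{thm-re}(iii) and \ref{thm-h-re}(iii). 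The paper simply quotes \cite[Theorems 1.2.20 and 1.4.1, Lemmas 4.3.9 and 1.8.5]{LYH2022} for the steps you describe heuristically, so your division of labor between the upper indices (feeding \eqref{ma}) and the lower indices (feeding \eqref{ma21}) matches the paper's argument.
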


\begin{proof}
By \cite[Theorems 1.2.20 and 1.4.1]{LYH2022},
we conclude that
$\dot{\mathcal{K}}_{\omega,\mathbf{0}}^{p,
r}(\mathbb{R}^{n})$ is a ball quasi-Banach space with an absolutely continuous quasi-norm.
To prove the required conclusion,
it suffices to show that
$\dot{\mathcal{K}}_{\omega,
\mathbf{0}}^{p,r}(\rn)$ satisfies
all the assumptions of both Theorems \ref{thm-re} and \ref{thm-h-re}. Indeed,
$\dot{\mathcal{K}}_{\omega,
\mathbf{0}}^{p,r}(\rn)$ satisfies Assumption
\ref{a} with
$$
s\in\left(\frac{n-1}{n+m-1},\min \left\{p, q, \frac{n}{\max
\left\{M_{0}(\omega),
M_{\infty}(\omega)\right\}+n /
p}\right\}\right]
$$
and
$$
\theta\in\left[\frac{n-1}{n+m-1}, s\right);
$$
see \cite[Lemma 4.3.9]{LYH2022}.
Moreover, from \cite[Lemma 1.8.5]{LYH2022}, we deduce
that Assumption \ref{a2} with $X:=\dot{\mathcal{K}}_{\omega,
\mathbf{0}}^{p,r}(\rn)$
holds true for any given
$$
q \in\left(\max \left\{1,p, \frac{n}{\min \left\{m_{0}(\omega),
m_{\infty}(\omega)\right\}+n / p}\right\}, \infty\right).
$$
Then, using Theorems \ref{thm-re}(iii) and \ref{thm-h-re}(iii)
with $X:=\dot{\mathcal{K}}_{\omega, \mathbf{0}}^{p,r}(\mathbb{R}^{n})$,
we obtain the desired conclusion, which
completes the proof of Theorem \ref{thm-re-mh}.
\end{proof}

\subsection{Mixed-Norm Herz
Space}\label{mhs}

In this section, we apply our main results to the mixed-norm Herz
spaces (see \cite{zyz2022}).

\begin{definition}\label{mhz}
Let $\vec{p}:=(p_{1},\ldots,p_{n}),\vec{q}:=(q_{1},\ldots,q_{n})
\in(0,\infty]^{n}$ and $\vec{\alpha}:=
(\alpha_{1},\ldots,
\alpha_{n})\in\rn$.
The \emph{mixed-norm Herz space}
$\dot{E}^{\vec{\alpha},\vec{p}}_{\vec{q}}(\rn)$ is
defined to be the
set of all the functions
$f\in \mathcal{M}(\rn)$ such
that
\begin{align*}
\|f\|_{\dot{E}^{\vec{\alpha},\vec{p}}_{\vec{q}}
(\rn)}:&=\left\{\sum_{k_{n} \in
\zz}2^{k_{n}
p_{n}\alpha_{n}}
\left[\int_{R_{k_{n}}}\cdots\left\{\sum_{k_{1}
\in \zz}
2^{k_{1}p_{1}\alpha_{1}}\right.\right.\right.\\
&\quad\times\left.\left.\left.\left[\int_{R_{k_{1}}}|f(x_{1},
\ldots,x_{n})|
^{q_{1}}\,dx_{1} \right]^{\f{p_{1}}{q_{1}}}
\right\}^{\f{q_{2}}
{p_{1}}}\cdots
\,dx_{n}  \right]^{\f{p_{n}}{q_{n}}}\right\}
^{\f{1}{p_n}}
<\infty
\end{align*}
with the usual modifications made when $p_{i}
=\infty$ or
$q_{j}=\infty$ for some $i,j\in\{1,\ldots,n \}$.
\end{definition}

\begin{remark}
Let $\vec{p},\vec{q}
\in(0,\infty]^{n}$ and $\vec{\alpha}\in\rn$.
By \cite[Propositions 2.8 and 2.22]{zyz2022}, we conclude that
$\dot{E}^{\vec{\alpha},\vec{p}}_{\vec{q}}(\rn)$
is a ball quasi-Banach space.
However, from \cite[Remark 2.4]{zyz2022}, we deduce that, when $\vec{p}=\vec{q}$ and
$\vec\alpha=\mathbf{0}$, the mixed-norm Herz
space $\dot{E}^{\vec{\alpha},\vec{p}}_{\vec{q}}(\rn)$
coincides with the mixed-norm Lebesgue
space $L^{\vec{p}}(\rn)$ defined in Definition \ref{mixed}.
Using Remark \ref{mix-r}, we conclude that $L^{\vec{p}}(\rn)$
may not be a quasi-Banach function space, and hence
$\dot{E}^{\vec{\alpha},\vec{p}}_{\vec{q}}(\rn)$
may not be a quasi-Banach function space.
\end{remark}

On the mixed-norm Herz--Hardy space, we have the following Riesz
transform characterization.

\begin{theorem}\label{thm-re-mhz}
Let $m\in\nn$, $\vec{p}:=(p_{1},\ldots,p_{n}),\vec{q}:=(q_{1},\ldots,q_{n})
\in(0,\infty)^{n}$, $\vec{\alpha}:=
(\alpha_{1},\ldots,
\alpha_{n})\in\rn$ with $\alpha_i\in(-\frac{1}{q_i},\infty)$ for any
$i\in\{1,\ldots,n\}$, and
$X:=\dot{E}^{\vec{\alpha},\vec{p}}_{\vec{q}}(\rn)$.
Assume that
$p_-,q_-\in(\frac{n-1}{n+m-1},\infty)$, where $p_-$ and
$q_-$ are the same as in \eqref{p-p+}, and, for any $i\in\{1,\ldots,n\}$,
$$
\lf(\alpha_{i}+\frac{1}{q_{i}}\r)^{-1}
\in\left(\frac{n-1}{n+m-1},\infty\right).
$$
Then $H_X(\rn)=H_{X,{\rm Riesz}}^m(\rn)$
with equivalent (quasi-)norms.
\end{theorem}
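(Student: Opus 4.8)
The plan is to verify that $X:=\dot{E}^{\vec{\alpha},\vec{p}}_{\vec{q}}(\rn)$ satisfies all the hypotheses of both Theorems \ref{thm-re} and \ref{thm-h-re}, and then to invoke parts (iii) of these two theorems. By \cite[Propositions 2.8 and 2.22]{zyz2022}, $X$ is a BQBF space, and, since $\vec{p},\vec{q}\in(0,\infty)^{n}$, the dominated convergence theorem shows that $X$ has an absolutely continuous quasi-norm. Thus the additional assumption in Theorems \ref{thm-re}(iii) and \ref{thm-h-re}(iii) is automatic, and it remains only to confirm Assumptions \ref{a} and \ref{a2} with a common admissible pair $(s,\theta)$.

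First I would fix the parameters. The hypotheses $p_-,q_-\in(\frac{n-1}{n+m-1},\infty)$ and $(\alpha_i+\frac{1}{q_i})^{-1}\in(\frac{n-1}{n+m-1},\infty)$ for every $i\in\{1,\dots,n\}$, together with $\frac{n-1}{n+m-1}<1$, guarantee that the interval
$$
\left(\frac{n-1}{n+m-1},\ \min\left\{1,\,p_-,\,q_-,\,\min_{1\le i\le n}\left(\alpha_i+\frac{1}{q_i}\right)^{-1}\right\}\right]
$$
is nonempty; I would pick $s$ from this interval, then $\theta\in[\frac{n-1}{n+m-1},s)$, and finally a sufficiently large $q\in(1,\infty)$. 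With such an $s$, the convexification $X^{1/s}$ is again a mixed-norm Herz space whose exponents $q_i/s$ and $p_i/s$ are all at least $1$, hence a ball Banach function space, as Assumption \ref{a2} requires. I would then establish Assumption \ref{a}, that is, the Fefferman--Stein vector-valued maximal inequality \eqref{ma} on $X$ with this $s$ and $\theta$; this follows from the boundedness of the powered Hardy--Littlewood maximal operator and its vector-valued extension on mixed-norm Herz spaces, for which the admissible range of indices is exactly what the bound $(\alpha_i+\frac{1}{q_i})^{-1}>\frac{n-1}{n+m-1}$ encodes, playing here the role that the Matuszewska--Orlicz indices play in the proof of Theorem \ref{thm-re-mh}. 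For Assumption \ref{a2}, I would identify the associate space $(X^{1/s})'$ through the duality theory for mixed-norm Herz spaces in \cite{zyz2022}, which realizes it as another mixed-norm Herz space with conjugate exponents, and then verify the inequality \eqref{ma21} for $M^{((q/s)')}$ on $(X^{1/s})'$ with $q$ chosen large. Once both assumptions hold for the same $s\in(\frac{n-1}{n+m-1},1]$ and $\theta\in[\frac{n-1}{n+m-1},s)$, Theorems \ref{thm-re}(iii) and \ref{thm-h-re}(iii) yield $H_X(\rn)=H_{X,{\rm Riesz}}^m(\rn)$ with equivalent quasi-norms.

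The step I expect to be the main obstacle is the verification of these two maximal inequalities on the mixed-norm Herz scale: pinning down the sharp range of $s$ for which both the vector-valued inequality on $X$ and the scalar inequality \eqref{ma21} on the associate space $(X^{1/s})'$ hold simultaneously, and checking that this range is nonempty under precisely the stated hypotheses on $\vec{\alpha}$, $\vec{p}$, and $\vec{q}$. This is where the interplay among the three index vectors enters most delicately, and where I would rely most heavily on the index computations available in \cite{zyz2022}.
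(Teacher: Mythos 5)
Your proposal is correct and follows essentially the same route as the paper: both verify that $\dot{E}^{\vec{\alpha},\vec{p}}_{\vec{q}}(\rn)$ is a BQBF space with absolutely continuous quasi-norm and then check Assumptions \ref{a} and \ref{a2} for a common pair $(s,\theta)$ with $s\in(\frac{n-1}{n+m-1},\min\{1,p_-,q_-,(\alpha_1+\frac1{q_1})^{-1},\dots,(\alpha_n+\frac1{q_n})^{-1}\})$ and $\theta\in[\frac{n-1}{n+m-1},s)$, citing the maximal-inequality and duality results of \cite{zyz2022} (the paper uses \cite[Lemma 5.3]{zyz2022} for exactly the two inequalities you flag as the main obstacle), before invoking Theorems \ref{thm-re}(iii) and \ref{thm-h-re}(iii).
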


\begin{proof}
By \cite[Propositions 2.8 and 2.22]{zyz2022}, we conclude that
$\dot{E}^{\vec{\alpha},\vec{p}}_{\vec{q}}(\rn)$
is a ball quasi-Banach space with an absolutely continuous quasi-norm.
By \cite[Lemma 5.3(i)]{zyz2022},
we conclude that
$\dot{E}^{\vec{\alpha},\vec{p}}_{\vec{q}}(\rn)$ satisfies Assumption
\ref{a} with
$$s\in\left(\frac{n-1}{n+m-1},\min\lf\{1,p_-,
q_-,\lf(\alpha_{1}+\frac{1}{q_{1}}\r)^{-1},
\ldots,\lf(\alpha_{n}+\frac{1}{q_{n}}\r)^{-1} \r \}\right)$$
and
$$ \theta\in \left[\frac{n-1}{n+m-1},s\right).$$
Let $p_+$ and $q_+$ be the same as in \eqref{p-p+} and
$$
q\in\lf(\max\lf\{1, p_+,
q_+,\lf(\alpha_{1}
+\frac{1}{q_{1}}\r)^{-1}
,\ldots,\lf(\alpha_{n}+\frac
{1}{q_{n}}\r)^{-1}\r\},\infty \r).
$$
Then, by \cite[Lemma 5.3(ii)]{zyz2022}
and its proof,
we conclude that Assumption \ref{a2}
holds true with $X:=\dot{E}^{\vec{\alpha},\vec{p}}_{\vec{q}}(\rn)$.
Then, using Theorems \ref{thm-re}(iii) and \ref{thm-h-re}(iii)
with $X:=\dot{E}^{\vec{\alpha},\vec{p}}_{\vec{q}}(\rn)$,
we obtain the desired conclusion,
which completes the proof of
Theorem \ref{thm-re-mhz}.
\end{proof}

\subsection{Morrey Space}\label{ms}
In this section, we apply our main results to Morrey spaces.
Let us recall the concept of Morrey spaces.

\begin{definition}
Let $0<p\le r\le\fz$.
The \emph{Morrey space} ${\mathcal M}^r_p(\rn)$
is defined to be the set of all the
$f\in L^p_{{\rm loc}}(\rn)$ such that
\begin{equation*}
\|f\|_{{\mathcal M}^r_p(\rn)}:=\sup_{B\subset\rn}
|B|^{\frac1r-\frac1p}\left[\int_{B}|f(y)|^p\,dy\right]^{\frac1p}<\fz,
\end{equation*}
where the supremum is taken over all balls $B\subset\rn$.
\end{definition}

On the Morrey--Hardy space, we have the following Riesz
transform characterization.

\begin{theorem}\label{thm-re-mhre}
Let $m\in\nn$, $0<p\le r\le\fz$, and
$X:={\mathcal M}^r_p(\rn)$.
If
$p\in(\frac{n-1}{n+m-1},\infty)$, then the following two statements hold true.
\begin{enumerate}
\item[{\rm(i)}] There exists a positive constant $C\in[0,\infty)$
such that, for any $f\in L^2(\rn)$,
$$
 \|f\|_{H_{X,{\rm Riesz}}^m(\rn)}\leq C \|f\|_{H_X(\rn)}.
$$
\item[{\rm(ii)}] There exists a positive constant $C\in[0,\infty)$
such that, for any $f\in \cs'(\rn)$,
$$
\|f\|_{H_X(\rn)}\leq C  \|f\|_{H_{X,{\rm Riesz}}^m(\rn)}.
$$
\end{enumerate}
\end{theorem}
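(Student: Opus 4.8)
The plan is to check that $\mathcal{M}^r_p(\rn)$ meets every hypothesis of Theorems \ref{thm-re} and \ref{thm-h-re} \emph{save} the absolute continuity of the quasi-norm, and then to invoke only parts (i) and (ii) of these theorems (which, unlike part (iii), do not require an absolutely continuous quasi-norm). First I would recall that, for any $0<p\le r\le\fz$, the Morrey space $\mathcal{M}^r_p(\rn)$ is a BQBF space (see, for instance, \cite{shyy}). Since the hypothesis $p\in(\frac{n-1}{n+m-1},\infty)$ permits the choice of an
$$
s\in\left(\frac{n-1}{n+m-1},\min\{1,p\}\right]\quad\text{and}\quad
\theta\in\left[\frac{n-1}{n+m-1},s\right),
$$
it remains to verify that $\mathcal{M}^r_p(\rn)$ satisfies Assumption \ref{a} with this pair $(s,\theta)$ and Assumption \ref{a2} with some $q\in(1,\fz]$.

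For Assumption \ref{a}, I would invoke the Fefferman--Stein vector-valued maximal inequality on Morrey spaces together with the powered-maximal/convexification device used throughout \cite{shyy} and \cite{wyy20}: the inequality \eqref{ma} for the triple $(s,\theta)$ above is exactly of the form known to hold on $\mathcal{M}^r_p(\rn)$ once $\theta$ exceeds the critical index $\frac{n-1}{n+m-1}$. For Assumption \ref{a2}, note that $s\le\min\{1,p\}$ forces $p/s\ge1$, so the $(1/s)$-convexification $X^{1/s}=\mathcal{M}^{r/s}_{p/s}(\rn)$ is a ball \emph{Banach} function space; its associate space is a block-type space, on which the powered Hardy--Littlewood maximal operator $M^{((q/s)')}$ is bounded for all sufficiently large $q$, which is precisely \eqref{ma21}. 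With both assumptions in hand, I would conclude by applying Theorem \ref{thm-re}(i) and (ii) in the case $m=1$, and Theorem \ref{thm-h-re}(i) and (ii) in the case $m\in\nn\cap[2,\infty)$, with $X:=\mathcal{M}^r_p(\rn)$; these yield verbatim the two inequalities asserted in (i) and (ii).

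The decisive point --- and the reason the statement stops at (i) and (ii) and does \emph{not} assert the norm equivalence $H_X(\rn)=H^m_{X,\mathrm{Riesz}}(\rn)$ --- is that a Morrey space in general fails to possess an absolutely continuous quasi-norm (see \cite[p.\,10]{shyy} and \cite[Remark 3.4]{wyy20}). Consequently the density of $H_X(\rn)\cap L^2(\rn)$ in $H_X(\rn)$, on which the proofs of part (iii) of both Theorems \ref{thm-re} and \ref{thm-h-re} crucially rely, is unavailable here (compare Remark \ref{re-sharp}(i)); hence only the one-sided estimates (i) and (ii) can be salvaged. Thus I expect the verification of Assumption \ref{a} --- that is, pinning down the exact reference or argument giving the Fefferman--Stein inequality on $\mathcal{M}^r_p(\rn)$ at the critical exponent $\frac{n-1}{n+m-1}$ --- to be the only substantive step, the remainder being a direct citation of the general theorems.
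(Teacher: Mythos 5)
Your proposal is correct and follows essentially the same route as the paper: the paper likewise verifies that $\mathcal{M}^r_p(\rn)$ is a BQBF space satisfying Assumption \ref{a} with $s\in(\frac{n-1}{n+m-1},\min\{1,p\})$ and $\theta\in[\frac{n-1}{n+m-1},s)$ and Assumption \ref{a2} with $q\in(\max\{1,p\},\infty)$ (citing \cite[Remarks 2.4(e) and 2.7(e)]{wyy20} for both), and then invokes only parts (i) and (ii) of Theorems \ref{thm-re} and \ref{thm-h-re}, precisely because the Morrey space may lack an absolutely continuous quasi-norm. Your identification of this last point as the reason the norm equivalence is not asserted matches Remark \ref{re-sharp}(i) exactly.
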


\begin{proof}
By \cite[Remarks 2.4(e) and 2.7(e)]{wyy20}, we conclude that
$X$
is a ball quasi-Banach space and satisfies Assumption
\ref{a} with
$$s\in\left(\frac{n-1}{n+m-1},\min\lf\{1,p \r \}\right)\
\text{and}
\
\theta\in \left[\frac{n-1}{n+m-1},s\right).
$$
Let $$
q\in\lf(\max\lf\{1, p\r\},\infty \r).
$$
Then, from \cite[Remarks 2.7(e)]{wyy20},
we deduce that Assumption \ref{a2}
holds true with $X:={\mathcal M}^r_p(\rn)$.
Then, using (i) and (ii) of Theorem \ref{thm-re} and (i) and (ii)
of Theorem \ref{thm-h-re} with $X:={\mathcal M}^r_p(\rn)$,
we obtain the desired conclusion,
which completes the proof of
Theorem \ref{thm-re-mhre}.
\end{proof}

\bigskip

\noindent Fan Wang, Dachun Yang (Corresponding author) and Wen Yuan

\medskip

\noindent Laboratory of Mathematics and Complex Systems (Ministry of Education of China),
School of Mathematical Sciences, Beijing Normal University,
Beijing 100875, People's Republic of China

\smallskip

\noindent{\it E-mails:} \texttt{fanwang@mail.bnu.edu.cn} (F. Wang)

\noindent\phantom{{\it E-mails:} }\texttt{dcyang@bnu.edu.cn} (D. Yang)

\noindent\phantom{{\it E-mails:} }\texttt{wenyuan@bnu.edu.cn} (W. Yuan)

\end{document}